\documentclass[11pt,a4paper]{article}
\usepackage[T1]{fontenc}
\usepackage[utf8]{inputenc}
\usepackage{amsmath,amssymb,amsthm,mathtools}
\IfFileExists{rsfso.sty}{\usepackage{rsfso}}{\usepackage{mathrsfs}}
\IfFileExists{newtxtext.sty}{\usepackage{newtxtext,newtxmath}}{\usepackage{lmodern}}
\usepackage{microtype}
\usepackage{geometry}
\usepackage{enumitem}
\usepackage[hidelinks]{hyperref}
\hypersetup{
  pdftitle={Global Strong Solutions for the Ternary Maxwell--Stefan Diffusion System},
  pdfauthor={Dieter Bothe}
}
\setlist[itemize]{topsep=3pt,itemsep=1pt,parsep=0pt}
\setlist[enumerate]{topsep=3pt,itemsep=1pt,parsep=0pt}
\allowdisplaybreaks

\newtheorem{theorem}{Theorem}[section]
\newtheorem{proposition}[theorem]{Proposition}
\newtheorem{lemma}[theorem]{Lemma}

\theoremstyle{definition}

\theoremstyle{remark}
\newtheorem{remark}[theorem]{Remark}

\newcommand{\R}{\mathbb R}
\newcommand{\sym}{\operatorname{sym}}
\newcommand{\Div}{\operatorname{div}}
\newcommand{\esssup}{\operatorname*{ess\,sup}}
\newcommand{\essinf}{\operatorname*{ess\,inf}}
\newcommand{\dd}{\,\mathrm d}

\title{Global Strong Solutions for the Ternary\\Maxwell--Stefan Diffusion System}
\author{Dieter Bothe\\
\small Mathematical Modeling and Analysis, Department of Mathematics,\\
\small Technische Universit\"at Darmstadt, Peter-Gr\"unberg-Stra\ss e 10,\\
\small 64287 Darmstadt, Germany\\
\small \texttt{bothe@mma.tu-darmstadt.de}}
\date{}

\begin{document}
\maketitle

\begin{abstract}
We prove global strong well-posedness for the normalized ideal, isothermal, isobaric ternary
Maxwell--Stefan diffusion system with equal molar masses, unit total density, and no reactions.
On bounded $C^{2+\alpha_0}$ domains, arbitrary positive constant binary frictions and compatible
nonnegative initial data are allowed, without a smallness condition.
For distinct frictions, trace--determinant variables produce two separated spectral roots satisfying
scalar parabolic equations with antisymmetric quadratic coupling.  Euler--Poisson--Darboux
one-sided truncations, stabilized by the mixing entropy, yield boundary De Giorgi oscillation decay
and positive-time H\"older regularity.  The mixing-entropy Hessian then provides a local
symmetrizer; conormal Schauder estimates give uniform $C^{1+a}$ control, and frozen maximal
$L_p$-regularity yields a uniform terminal trace bound excluding finite-time breakdown.  Repeated
frictions reduce to triangular scalar parabolic systems.  Every component with positive conserved
mass is strictly positive for positive time.
\end{abstract}

\medskip
\noindent\textbf{Keywords.}
Maxwell--Stefan diffusion; ternary cross-diffusion; global strong solutions; De Giorgi regularity;
entropy symmetrization; conormal Schauder estimates.

\smallskip
\noindent\textbf{2020 Mathematics Subject Classification.}
35K51, 35K59 (primary); 35B65, 35D35, 76R50 (secondary).

\section{Introduction and main result}\label{sec:intro}

Maxwell--Stefan diffusion couples the species fluxes through pairwise friction laws and leads,
after elimination of the flux constraint, to a generally nonsymmetric cross-diffusion system.
For the isothermal, isobaric diffusion problem, normal ellipticity yields local strong
well-posedness; see Bothe~\cite{Bothe2011} and the maximal-$L_p$ theory of
Herberg--Meyries--Pr\"uss--Wilke~\cite{HerbergMeyriesPruessWilke2017}, which also gives
positivity and instantaneous regularization on bounded domains.  The same functional-analytic
approach has also been applied to multicomponent systems coupled to fluid flow, reactions, mass
transfer, and phase transition.  In
Bothe--Pr\"uss~\cite{BothePruess2017}, Maxwell--Stefan diffusion is incorporated into a
reactive two-phase model with mass transfer and phase transition, for which an $L_p$
well-posedness and state-manifold theory is developed.  Bothe--Druet established local strong
well-posedness and global existence near equilibrium for broad compressible and incompressible
multicomponent flow models~\cite{BotheDruet2021,BotheDruetIncompressible2021}.  A global
perturbative Cauchy theory for a flux-incompressible Maxwell--Stefan system was established by
Bondesan--Briant~\cite{BondesanBriant2022}.

Related $L_p$-based strong-solution and continuation methods for chemically coupled parabolic
transport systems have been developed for electrokinetic Navier--Stokes--Nernst--Planck--Poisson
flows~\cite{BotheFischerSaal2014}, for reaction--advection--anisotropic-diffusion systems using
optimal maximal regularity for time-dependent diffusion operators~\cite{BotheFischerPierreRolland2017},
and for bulk--surface reaction--diffusion and heterogeneous-catalysis systems
~\cite{BotheKohneMaierSaal2017,AugnerBothe2021,AugnerBothe2024}.  Although these problems have
different diffusion laws and couplings, they share with the present argument the use of strong
solution spaces, positivity or physical-state constraints, global a priori estimates, and
continuation arguments based on parabolic regularity.

At the weak-solution level, several global results are available.  J\"ungel--Stelzer
proved global bounded weak solutions and exponential convergence for general multicomponent
Maxwell--Stefan systems~\cite{JungelStelzer2013}; related reactive systems were treated by
Daus--J\"ungel--Tang~\cite{DausJungelTang2020}.  Marion--Temam obtained global existence for
fully nonlinear multispecies reaction--diffusion systems whose diffusion coefficients arise from
the Stefan--Maxwell relations~\cite{MarionTemam2015}.  Cross-diffusion also appears as the
fast-reaction limit of reaction--diffusion systems; Bothe--Pierre--Rolland~\cite{BothePierreRolland2012}
combined entropy estimates with the limiting argument and obtained local strong solvability for
the resulting cross-diffusion problem.  Relative-entropy methods yield
weak--strong uniqueness in the bounded-domain Maxwell--Stefan setting
\cite{HuoJungelTzavaras2022}; subsequent work includes uniqueness of renormalized solutions,
with an application to weak solutions with bounded fluxes~\cite{GeorgiadisKimTzavaras2024},
and a weak--strong uniqueness result for broad cross-diffusion classes that include
Maxwell--Stefan systems, with relaxed regularity and positivity assumptions on the strong
solution~\cite{GeltnerHeitzingerJungel2026}.  Weak solutions also satisfy the
entropy equality without anomalous dissipation~\cite{BerselliGeorgiadisTzavaras2025}.
Braukhoff--Raithel--Zamponi proved partial $C^{1,\alpha}$ regularity for weak
Maxwell--Stefan solutions within an entropy-structured cross-diffusion
framework~\cite{BraukhoffRaithelZamponi2022}.  Georgiadis~\cite{Georgiadis2026} also obtains
global weak solutions for the isothermal Maxwell--Stefan system on the whole space $\mathbb R^3$.

Global strong solvability for arbitrary data is known in special coefficient configurations.  In
the ternary case Boudin--Grec--Salvarani~\cite{BoudinGrecSalvarani2012} exploited a triangular
structure arising from two equal binary coefficients to obtain unique global smooth solutions.
To the best of our knowledge, the literature cited above does not contain global strong
well-posedness for arbitrary compatible data in the bounded-domain ternary Maxwell--Stefan
diffusion problem with three unrestricted positive binary frictions: the available general results
are instead local strong, perturbative or near-equilibrium strong, or global weak results.  We prove
global strong well-posedness in this setting for arbitrary compatible nonnegative strong data,
including data on the boundary of the concentration simplex.

A methodological comparison is provided by the author's preprint~\cite{BotheAdditive2026},
which treats Maxwell--Stefan systems with additive frictions by means of Stieltjes root
coordinates, Euler--Poisson--Darboux (EPD) identities, entropy-stabilized truncations, De Giorgi
regularity, and a terminal continuation argument.  The present ternary problem is not restricted to the
positive additive class: although every symmetric ternary friction triple has a unique real
pair-sum representation, its generators need not be positive.  Nevertheless the two-root
Stieltjes/EPD structure persists.  Section~\ref{subsec:additive-bridge} identifies the precise
affine relation between the two coordinate systems.  All estimates needed for the unrestricted
ternary theorem are stated and proved below; the preprint is used for methodological comparison
and attribution rather than as a substitute for the arguments required here.

A second interpretation follows from the core-diagonal thermodynamic closure.
Bothe and Druet~\cite[Sections~8--9]{BotheDruet2023} show that this closure generates the
multicomponent Darken relation and that every ternary Maxwell--Stefan system is core-diagonal.  In the present constant-friction
setting the corresponding three tracer diffusivities are a common scalar multiple of one fixed
species vector.  Thus the tracer diffusivity vector in the ternary problem considered here lies on a fixed ray
within the Darken class.  Section~\ref{subsec:darken-bridge} gives the exact formula and shows that the quadratic
trace--determinant polynomial is precisely the Stieltjes numerator for the fixed pole vector
obtained by removing the common scalar factor from the diagonal tracer mobility.  Thus the two root coordinates used below coincide with the $N=3$ Stieltjes coordinates of the
core-diagonal closure.

The theorem proved here does not impose the positive-generator restriction of the additive class.
No constitutive relation among the three positive physical frictions is imposed: the unique real
pair-sum generator may be nonpositive, equivalently one of the strict triangle inequalities for
the friction triple may fail.  Moreover the initial concentrations may lie on the boundary of the
simplex.  The unrestricted ternary reduction still yields two scalar root equations with an
antisymmetric quadratic coupling; exact one-sided EPD truncations and the mixing-entropy
stabilization yield the estimates used in the boundary De Giorgi argument and the terminal
continuation step.
The result below therefore applies to arbitrary positive ternary frictions and arbitrary compatible
nonnegative strong data.  No smallness condition is imposed, and the result holds in every finite
spatial dimension.

Let
\[
\mathbf e=(1,1,1)^{\sf T},\qquad \mathbb E=\{z\in\R^3:z\cdot\mathbf e=0\}.
\]
For $p>d+2$ set
\[
X_{\gamma,p}
=\{z\in W_p^{2-2/p}(\Omega;\mathbb E):\partial_\nu z=0\}.
\]
A global strong solution means that for every $T>0$,
\[
y-\mathbf e/3\in
W_p^1(0,T;L_p(\Omega;\mathbb E))
\cap L_p(0,T;W_p^2(\Omega;\mathbb E))
\cap C([0,T];X_{\gamma,p}),
\]
and that the Maxwell--Stefan equations and the no-flux condition hold almost everywhere.
Here ``classical for every positive time'' means that for every $0<\tau<T<\infty$ the solution
has
\[
y\in C^1([\tau,T];C(\overline\Omega;\mathbb R^3))
\cap C([\tau,T];C^2(\overline\Omega;\mathbb R^3)),
\]
with the equations and the boundary condition satisfied pointwise; the local theory recalled below
also yields the corresponding positive-time H\"older refinements.

\begin{theorem}[Global strong solvability]\label{thm:main}
Consider the normalized ideal isothermal, isobaric ternary Maxwell--Stefan diffusion model of
Section~\ref{sec:model}, with equal molar masses, unit total density, and zero reaction.
Let $d\ge1$, let $0<\alpha_0<1$, and let $\Omega\subset\R^d$ be bounded and connected with
$C^{2+\alpha_0}$ boundary.  Let $p>d+2$, and let $f_{ij}=f_{ji}>0$ be arbitrary constant binary frictions.  Suppose
\[
\begin{gathered}
y^0\in W_p^{2-2/p}(\Omega;\R^3),\qquad
 y_i^0\ge0,\qquad \sum_{i=1}^3y_i^0=1\quad\text{on }\overline\Omega,\\
\partial_\nu y^0=0\quad\text{on }\partial\Omega\text{ in the trace sense}.
\end{gathered}
\]
Then this problem, with homogeneous no-flux boundary conditions, has a unique global strong solution.  The solution remains in the closed concentration simplex and is
classical for every positive time.  If $\int_\Omega y_i^0\,\dd x>0$, then
$y_i(t,x)>0$ for every $t>0$ and $x\in\overline\Omega$.
\end{theorem}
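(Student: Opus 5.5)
The proof is a continuation argument built on the local strong theory. By the maximal-$L_p$ theory of \cite{Bothe2011,HerbergMeyriesPruessWilke2017}, the problem with $p>d+2$ and compatible data in $X_{\gamma,p}$ has a unique maximal strong solution on $[0,T^*)$. This solution stays in the closed simplex, is classical for positive times, and is positive on $\overline\Omega$ for $t>0$ in every component with positive mass, by the strong maximum principle for the scalar equation $\partial_t y_i-\Div(a_i\nabla y_i)=b_i\cdot\nabla y_i$ satisfied by each $y_i$. Moreover $T^*<\infty$ forces $\|y(t)\|_{X_{\gamma,p}}$ to be unbounded as $t\uparrow T^*$. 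Since the trace space is compactly embedded in $C^1$ for $p>d+2$, it suffices to show the following: for every finite $T$, the solution has a uniform $C^{1+a}(\overline\Omega)$ bound on $[\tau,T^*)$ for some fixed $\tau>0$. From that bound, frozen-coefficient maximal $L_p$-regularity on $[\tau,T^*)$ gives a terminal trace bound, and the solution extends. Uniqueness comes from the local theory.

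\textbf{Case split by frictions.} If two frictions coincide, say $f_{12}=f_{13}$, the system becomes triangular in the sense of \cite{BoudinGrecSalvarani2012}. Component $1$ solves a scalar equation with constant diffusivity. The remaining unknown then solves a linear scalar equation whose coefficients depend on $y_1$, and the required bounds follow from scalar Schauder and $L_p$ theory. For distinct frictions I would pass to trace--determinant variables. The $2\times2$ diffusion matrix on $\mathbb E$ has eigenvalues $\lambda_1<\lambda_2$, which are the roots of a quadratic (Stieltjes) polynomial whose coefficients are affine in $y$. Because the frictions are distinct, these roots stay separated uniformly on the simplex, and the map $y\mapsto(\lambda_1,\lambda_2)$ is a diffeomorphism on the relevant region. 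Rewriting the system in these variables gives two scalar equations of the form
\[
\partial_t\lambda_k-\Div(\lambda_k\nabla\lambda_k)=Q_k(\nabla\lambda_1,\nabla\lambda_2),
\]
where the quadratic coupling is antisymmetric, of type $\pm c\,|\nabla\lambda_1-\nabla\lambda_2|^2/(\lambda_2-\lambda_1)$, and the Neumann condition is inherited.

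\textbf{H\"older step (main obstacle).} Next I would prove H\"older continuity of $\lambda_1,\lambda_2$ up to the boundary for positive times, uniformly in $t$. The quadratic gradient term has no sign and therefore obstructs standard De Giorgi theory. My plan is to use one-sided truncations $(\lambda_k-\kappa)_\pm$ weighted by an Euler--Poisson--Darboux identity, chosen so that the antisymmetric coupling enters each energy inequality with a favorable sign. The remaining cross terms I would control with the mixing-entropy dissipation
\[
\int_0^T\!\!\int_\Omega\sum_i |\nabla y_i|^2/y_i<\infty,
\]
which gives a uniform $L_2$ bound on the gradients in root variables, including near the simplex boundary. From these Caccioppoli inequalities, a De Giorgi oscillation-decay argument in boundary-flattened cylinders yields $C^{\beta}$ bounds. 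Making these truncations exact near degenerate states, where some $y_i=0$, is where I expect the real difficulty to lie.

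\textbf{Gradient bound and conclusion.} With the coefficients H\"older continuous, the Hessian of the mixing entropy symmetrizes the system locally. This gives a divergence-form system with H\"older continuous, uniformly elliptic, symmetric coefficients. Conormal Schauder estimates, localized in time, then give a uniform $C^{1+a}$ bound on $[\tau,T^*)$. Frozen-coefficient maximal regularity completes the terminal trace bound and the continuation argument described in the first paragraph.
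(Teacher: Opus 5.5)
Your outline follows the paper's architecture: HMPW local theory and continuation, the triangular reduction for coincident frictions, trace--determinant roots, one-sided EPD truncations, boundary De Giorgi, entropy-Hessian freezing with conormal Schauder, and frozen maximal regularity. There is a genuine gap, however. The step that makes every constant uniform up to $T^*$ is missing and has been replaced by a false claim.

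\textbf{The roots are not uniformly separated on the closed simplex.} Let $p(\lambda)=\lambda^2-\Lambda\lambda+\Pi$. Then $p(\beta)=-(\beta-\alpha)(\gamma-\beta)y_2\le0$, while a double root $\lambda_0$ would give $p(\beta)=(\beta-\lambda_0)^2$. So a collision forces $\lambda_0=\beta$ and $y_2=0$. Indeed $u=v=\beta$ at $y=\bigl(\tfrac{\gamma-\beta}{\gamma-\alpha},0,\tfrac{\beta-\alpha}{\gamma-\alpha}\bigr)$. At that point $h=(u+v)/(uv(v-u))$ blows up and the root map is not a diffeomorphism. Pointwise positivity at each $t>0$ does not repair this, because it gives no lower bound that is uniform as $t\uparrow T^*$.

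\textbf{The missing idea} is the forward-invariant spectral rectangle of Lemma~\ref{lem:forward-spectral-rectangle}. It removes your anticipated ``real difficulty near degenerate states'' rather than confronting it.
\begin{itemize}
\item At one time $t_*>0$ all $y_i>0$ on $\overline\Omega$, so the root ranges lie in a compact subrectangle of $(\alpha,\beta)\times(\beta,\gamma)$.
\item Each root solves a scalar Neumann problem $w_t-\Div(a\nabla w)+W\cdot\nabla w=0$ with no zeroth-order term, and $W$ is bounded on each compact preterminal interval.
\item A positive-part Gronwall argument then shows that $\min u$ and $\min v$ are nondecreasing, while $\max u$ and $\max v$ are nonincreasing.
\end{itemize}
Hence one fixed rectangle, with $v-u\ge\delta>0$ and $y_i\ge m_{\mathcal R}>0$, contains the state on all of $[t_*,T^*)$. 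No truncation ever has to be performed near the simplex boundary.

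\textbf{The zero-mass case is also omitted.} A species with zero conserved mass vanishes identically, so the state stays on an edge of the simplex, outside the open root chart, for all time. The spectral argument therefore cannot start. The paper treats this separately in Lemma~\ref{lem:zero-mass-binary}: it shows $J_3=0$ and reduces the problem to the Neumann heat equation $\partial_ty_1-f_{12}^{-1}\Delta y_1=0$.

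Two further statements would derail the execution even with the rectangle in hand.

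\textbf{The form of the root equations is wrong.} They are not $\partial_t\lambda_k-\Div(\lambda_k\nabla\lambda_k)=\pm c|\nabla\lambda_1-\nabla\lambda_2|^2/(\lambda_2-\lambda_1)$. For the eigenvalues $u<v$ of the reduced friction matrix $C$ they read $u_t-\Div(v^{-1}\nabla u)=h\nabla u\cdot\nabla v$ and $v_t-\Div(u^{-1}\nabla v)=-h\nabla u\cdot\nabla v$. So each diffusivity depends on the \emph{opposite} root, and the only quadratic term is the mixed product, with no $|\nabla u|^2$ or $|\nabla v|^2$. This structure is exactly what the EPD relation $F_{uv}=(F_u-F_v)/(v-u)$ cancels, identically rather than merely with a favorable sign. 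A coupling containing $|\nabla\lambda_k|^2$ would not be removed this way.

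\textbf{A global entropy-dissipation bound is not enough.} The bound on $\iint\sum_i|\nabla y_i|^2/y_i$ is not scale-invariant. It cannot control the cutoff-flux error $r^2|\nabla\eta|\,|\nabla v|$ or the drift error $F'(r)|\nabla r|\,|\nabla v|$ in small-cylinder Caccioppoli and logarithmic estimates. The paper uses the entropy \emph{locally}. It adds $\varepsilon\tfrac{r^2}{2}\widehat{\mathfrak h}$ to each one-sided EPD truncation, which is possible only because on the rectangle $\mathfrak h$ is smooth and $\mathscr Q[\mathfrak h]\ge\kappa_{\mathcal R}(|\nabla u|^2+|\nabla v|^2)$. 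This places $\varepsilon r^2|\nabla v|^2$ on the left of every level-set energy inequality (Lemma~\ref{lem:stabilized-coercivity}, Proposition~\ref{prop:DG-Caccioppoli}), and that term absorbs both errors.
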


\paragraph{Outline of the proof.}
For pairwise distinct frictions the contradiction argument has four stages:
\begin{enumerate}[label=\textup{(\roman*)},leftmargin=*]
\item Section~\ref{sec:local} supplies local strong solvability, preservation of the simplex, and
positive-time positivity; Sections~\ref{sec:spectral}--\ref{sec:DG} then give a fixed separated
spectral rectangle and a uniform positive-time H\"older modulus for both roots.
\item Sections~\ref{sec:symmetrizer}--\ref{sec:first-schauder} freeze the mixing-entropy
Hessian on small state-diameter cylinders and obtain a uniform terminal $C^{1+a}$ estimate,
hence a spatial Lipschitz bound.
\item Section~\ref{sec:terminal-trace} freezes only the principal reduced operator
$-C(y')^{-1}\Delta$.  The $C^{1+a}$ bound makes this frozen family compact, the De Giorgi time
H\"older modulus controls its variation in $\mathcal L(X_1,X_0)$, and the spatial Lipschitz bound
controls the quadratic nondivergence forcing.  Uniform frozen maximal regularity and short-time
absorption then yield a uniform terminal bound in $X_{\gamma,p}=W_p^{2-2/p}$ with the Neumann
constraint.
\item Section~\ref{sec:restart} verifies the HMPW continuation hypotheses on a restarted orbit and
excludes a finite maximal time.  Zero-mass and coincident-friction cases reduce to scalar or
triangular parabolic equations and are treated separately there.
\end{enumerate}
The ternary-specific points are the sign-independent root geometry, the extension beyond positive
pair-sum generators, the treatment of the closed concentration simplex and repeated frictions,
and the terminal continuation argument in the resulting two-root coordinates.

\section{Model and algebraic reduction}\label{sec:model}

We use throughout the normalization corresponding to equal molar masses, unit total density,
and zero reaction.  Thus mass and molar fractions coincide.  These assumptions are hypotheses of Theorem~\ref{thm:main}, rather than consequences of a
change of units.  Let
\[
y=(y_1,y_2,y_3),\qquad y_i\ge0,\qquad y_1+y_2+y_3=1,
\]
and let the normalized Maxwell--Stefan diffusion equations be
\begin{align}
 \partial_t y_i+\Div J_i&=0,\qquad i=1,2,3,\\
 \nabla y_i&=\sum_{j\ne i}f_{ij}(y_iJ_j-y_jJ_i),\\
 \sum_{i=1}^3J_i&=0.
\end{align}
The boundary condition is
\[
J_i\cdot\nu=0.
\]
Consequently each species mass is conserved:
\begin{equation}\label{eq:species-mass-conservation}
\frac{\dd}{\dd t}\int_\Omega y_i(t,x)\,\dd x
=-\int_{\partial\Omega}J_i\cdot\nu\,\dd S=0,
\qquad i=1,2,3.
\end{equation}
In particular, a nonnegative component with zero initial mass vanishes identically.

Write
\[
\alpha=f_{12},\qquad \beta=f_{13},\qquad \gamma=f_{23}>0.
\]
When the three frictions are distinct, species are relabeled so that
\[
0<\alpha<\beta<\gamma.
\]

Set $y_3=1-y_1-y_2$ and $y'=(y_1,y_2)^{\sf T}$.  A direct substitution of $J_3=-J_1-J_2$ gives
\begin{align*}
-\nabla y_1
&=\bigl[\beta+(\alpha-\beta)y_2\bigr]J_1+(\beta-\alpha)y_1J_2,\\
-\nabla y_2
&=(\gamma-\alpha)y_2J_1+\bigl[\gamma+(\alpha-\gamma)y_1\bigr]J_2,
\end{align*}
so the reduced matrix relation below is exact, rather than a linearization.
Thus
\[
-\nabla y'=C(y')\binom{J_1}{J_2},
\]
where
\[
C(y')=
\begin{pmatrix}
\beta+(\alpha-\beta)y_2&(\beta-\alpha)y_1\\
(\gamma-\alpha)y_2&\gamma+(\alpha-\gamma)y_1
\end{pmatrix}.
\]
Expanding the determinant $\Pi$ and using $y_3=1-y_1-y_2$ gives
\[
\Pi=\alpha\beta y_1+\alpha\gamma y_2+\beta\gamma y_3,
\]
and therefore
\[
\Pi\ge \min\{\alpha\beta,\alpha\gamma,\beta\gamma\}>0
\]
on the entire closed simplex.  Hence
\begin{equation}\label{eq:reduced-system}
y'_t-\Div\bigl(C(y')^{-1}\nabla y'\bigr)=0.
\end{equation}
The reduced diffusion matrix is nonsingular up to the simplex boundary.  Since $C$ is invertible,
the no-flux condition is equivalent to
\[
\partial_\nu y'=0.
\]

\subsection{Core-diagonal Darken interpretation}\label{subsec:darken-model}
The preceding constant-friction system is exactly a ternary core-diagonal Darken system \cite[Section~9]{BotheDruet2023}.  The following calculation verifies this representation and identifies the spectral variables
used below.  With
$\Pi$ as above set
\begin{equation}\label{eq:ternary-Darken-tracers}
 d_1(y)=\frac{\gamma}{\Pi(y)},\qquad
 d_2(y)=\frac{\beta}{\Pi(y)},\qquad
 d_3(y)=\frac{\alpha}{\Pi(y)}.
\end{equation}
Then
\[
 S_D(y):=\sum_{i=1}^3\frac{y_i}{d_i(y)}
 =\Pi(y)\left(\frac{y_1}{\gamma}+\frac{y_2}{\beta}+\frac{y_3}{\alpha}\right)
 =\frac{\Pi(y)^2}{\alpha\beta\gamma}.
\]
The multicomponent Darken formula
\[
 \mathfrak D_{ij}=d_i d_j S_D
\]
therefore gives
\begin{equation}\label{eq:ternary-Darken-MS}
 \mathfrak D_{12}=\frac1\alpha,\qquad
 \mathfrak D_{13}=\frac1\beta,\qquad
 \mathfrak D_{23}=\frac1\gamma,
\end{equation}
which is exactly the present Maxwell--Stefan system since $f_{ij}=1/\mathfrak D_{ij}$.
This is the normalized molar form of the ternary core-diagonal representation in
\cite[Section~9]{BotheDruet2023}.

Equivalently, every positive constant ternary friction triple is multiplicatively separated.  Indeed,
\[
 b_1=\sqrt{\frac{\alpha\beta}{\gamma}},\qquad
 b_2=\sqrt{\frac{\alpha\gamma}{\beta}},\qquad
 b_3=\sqrt{\frac{\beta\gamma}{\alpha}}
\]
satisfies $f_{ij}=b_i b_j$.  Formula~\eqref{eq:ternary-Darken-tracers} also specifies the composition dependence of the
tracer diffusivities: the tracer vector has the fixed-ray form
\[
 d(y)=\Pi(y)^{-1}(\gamma,\beta,\alpha)^{\sf T}.
\]
After removal of the common scalar factor, the diagonal tracer mobility defines fixed Stieltjes
poles.  The scaling relation between its two Stieltjes roots and the trace--determinant roots is recorded in
Section~\ref{subsec:darken-bridge}.

\section{Local strong theory and positivity}\label{sec:local}
For $p>d+2$ the local theory of Herberg--Meyries--Pr\"uss--Wilke applies to the present
normalization.  Indeed, with equal molar masses $M=I$, total density $\rho=1$, and zero reaction,
their Maxwell--Stefan matrix satisfies, in every spatial direction $\alpha$,
\[
(B(y)J^\alpha)_i
=\sum_{j\ne i}f_{ij}(y_iJ_j^\alpha-y_jJ_i^\alpha)
=\partial_\alpha y_i.
\]
Moreover $\sum_i\partial_\alpha y_i=0$, so their projection
$P(y)=I-y\otimes\mathbf e$ acts as the identity on $\partial_\alpha y$.  The sign conventions agree under the reduction above: our matrix $C$ was defined by
$-\nabla y'=C(y')J'$, whereas the full Maxwell--Stefan matrix of
\cite{HerbergMeyriesPruessWilke2017} is defined by $B(y)J=\nabla y$.  After eliminating
$J_3=-J_1-J_2$, the first two rows of $B(y)J=\nabla y$ are exactly
$-C(y')J'=\nabla y'$.  Hence their inverse flux operator is represented by $-C(y')^{-1}$ in
these coordinates, and their equation with $+\Div(A(y)\nabla y)$ is precisely
\eqref{eq:reduced-system}.  Thus their quasilinear problem specializes exactly to the normalized
diffusion system considered here.

For compatible data in $W_p^{2-2/p}$ taking values in the closed simplex, the admissible
state neighborhood constructed in \cite[Section~3.1]{HerbergMeyriesPruessWilke2017} contains that
entire closed simplex; in particular, zero components at $t=0$ are allowed.  Then
\cite[Theorem~3.2]{HerbergMeyriesPruessWilke2017} yields a unique maximal strong solution on
$[0,T_{\max})$ and states lower semicontinuity of the maximal lifespan with respect to the trace
space.  If $\partial\Omega\in C^{2+\alpha_0}$, the solution is classical for every positive time by
\cite[Theorem~3.3]{HerbergMeyriesPruessWilke2017}.  Nonnegativity is preserved, and every
nontrivial species becomes strictly positive for every positive time by
\cite[Theorem~3.4]{HerbergMeyriesPruessWilke2017}.  The global a priori argument below uses the
maximal strong class, preservation/positivity, and the HMPW continuation framework.  Positive-time
classicality is retained as part of the conclusion of Theorem~\ref{thm:main}, but it is not needed
to justify the spectral truncation or De Giorgi tests.

\section{Trace--determinant and spectral coordinates}\label{sec:spectral}

Define
\begin{align}
\Lambda&=(\alpha+\beta)y_1+(\alpha+\gamma)y_2+(\beta+\gamma)y_3,\\
\Pi&=\alpha\beta y_1+\alpha\gamma y_2+\beta\gamma y_3,
\end{align}
and
\[
V=(\Lambda,\Pi)^{\sf T}.
\]
For pairwise distinct frictions, these variables form an affine coordinate system on the
concentration simplex.  Indeed, after eliminating $y_3$,
\[
D_{y'}V=L=
\begin{pmatrix}
\alpha-\gamma&\alpha-\beta\\
\beta(\alpha-\gamma)&\gamma(\alpha-\beta)
\end{pmatrix},
\qquad
\det L=(\alpha-\gamma)(\alpha-\beta)(\gamma-\beta)\ne0.
\]
Since $L$ is constant, the reduced equation gives
\[
V_t-\Div\bigl(LC(y')^{-1}L^{-1}\nabla V\bigr)=0.
\]
A direct $2\times2$ multiplication, using $\det C=\Pi$, yields the exact identity
\[
LC(y')^{-1}L^{-1}
=
\mathbb A(V):=
\begin{pmatrix}
\Lambda/\Pi&-1/\Pi\\
1&0
\end{pmatrix}.
\]
Thus
\[
V_t-\Div\bigl(\mathbb A(V)\nabla V\bigr)=0.
\]
For notational brevity we write
\[
A(V):=\mathbb A(V)
\]
from this point onward.  Equivalently,
\begin{align}
\Lambda_t-\Div\left(\frac{\Lambda\nabla\Lambda-\nabla\Pi}{\Pi}\right)&=0,\label{eq:Lambda}\\
\Pi_t-\Delta\Lambda&=0.\label{eq:Pi}
\end{align}

The physical $V$-state set is the compact triangle
\[
\mathfrak V=\operatorname{co}\bigl\{
(\alpha+\beta,\alpha\beta),
(\alpha+\gamma,\alpha\gamma),
(\beta+\gamma,\beta\gamma)
\bigr\}.
\]
The no-flux condition becomes
\[
\partial_\nu\Lambda=\partial_\nu\Pi=0,
\]
and for distinct frictions the converse also holds.

\subsection{Darken and Stieltjes root coordinates}\label{subsec:darken-bridge}
The Darken representation from Section~\ref{subsec:darken-model} identifies the two spectral
roots as Stieltjes roots.  Associate to the three species the fixed poles
\[
 \ell_1=\gamma,\qquad \ell_2=\beta,\qquad \ell_3=\alpha.
\]
The Stieltjes numerator for the fixed pole vector $\ell=(\gamma,\beta,\alpha)$ is
\begin{align}
 Q_y(z)
 &=y_1(\beta-z)(\alpha-z)
   +y_2(\gamma-z)(\alpha-z)
   +y_3(\gamma-z)(\beta-z) \notag\\
 &=z^2-\Lambda z+\Pi.\label{eq:darken-stieltjes-polynomial}
\end{align}
Consequently the roots $u,v$ introduced below from the trace and determinant are exactly the two
Stieltjes roots for the fixed poles $\ell_i$.  For the tracer diffusivities
$d_i(y)=\ell_i/\Pi(y)$, the numerator in a mobility variable $\zeta$ is instead
\begin{equation}\label{eq:darken-stieltjes-scaling}
 \sum_{i=1}^3 y_i\prod_{j\ne i}(d_j(y)-\zeta)
 =\Pi(y)^{-2}Q_y(\Pi(y)\zeta).
\end{equation}
Its roots are $u/\Pi=1/v$ and $v/\Pi=1/u$, since $\Pi=uv$.
These are the two diffusion eigenvalues; $u,v$ are the relaxation roots used in the PDE
calculations below.  In particular, after the ordering
$0<\alpha<\beta<\gamma$, their interlacing
\[
 \alpha<u<\beta<v<\gamma
\]
is the usual Stieltjes interlacing with the fixed poles, written in the reverse species order.
This identity also explains why the inverse map from $(u,v)$ to the concentrations is multi-affine
and why the Euler--Poisson--Darboux relation appears in the ternary calculation.

\subsection{Relation with additive moment and root coordinates}\label{subsec:additive-bridge}
This subsection gives the explicit coordinate relation with the additive-friction analysis of
~\cite{BotheAdditive2026}.  For $N=3$, this relation remains valid without positivity of the
pair-sum generators.  Define
\begin{equation}\label{eq:ternary-real-generators}
 g_1:=\frac{\alpha+\beta-\gamma}{2},\qquad
 g_2:=\frac{\alpha+\gamma-\beta}{2},\qquad
 g_3:=\frac{\beta+\gamma-\alpha}{2}.
\end{equation}
Then
\begin{equation}\label{eq:ternary-pairsum-representation}
 \alpha=g_1+g_2,\qquad
 \beta=g_1+g_3,\qquad
 \gamma=g_2+g_3,
\end{equation}
and this is the unique real pair-sum representation of the friction triple.  Moreover
\[
 g_2-g_1=\gamma-\beta>0,\qquad
 g_3-g_2=\beta-\alpha>0,
\]
so $g_1<g_2<g_3$.  The generators are all positive if and only if the strict triangle
inequalities hold for $(\alpha,\beta,\gamma)$; under the present ordering only
$\gamma<\alpha+\beta$ is not implied by the ordering.  Thus arbitrary positive ternary frictions are always
pair-additive over \emph{real} generators, but need not belong to the positive-generator
constitutive class considered in~\cite{BotheAdditive2026}.

Set
\[
 e_1:=g_1+g_2+g_3,\qquad
 e_2:=g_1g_2+g_1g_3+g_2g_3,
\]
and introduce the first two additive moments
\[
 U_1:=\sum_{i=1}^3 g_i y_i,\qquad
 U_2:=\sum_{i=1}^3 g_i^2y_i.
\]
The trace--determinant coordinates above are exactly fixed translates of these moments:
\begin{equation}\label{eq:ternary-moment-translation}
 \Lambda=e_1+U_1,\qquad \Pi=e_2+U_2.
\end{equation}
Indeed,
\[
 e_1+g_1=\alpha+\beta,\qquad
 e_1+g_2=\alpha+\gamma,\qquad
 e_1+g_3=\beta+\gamma,
\]
while
\[
 e_2+g_1^2=\alpha\beta,\qquad
 e_2+g_2^2=\alpha\gamma,\qquad
 e_2+g_3^2=\beta\gamma.
\]

For an interior composition define the additive resolvent
\[
 m_y(z):=\sum_{i=1}^3\frac{y_i}{g_i-z}
\]
and its monic numerator
\[
 q_y(z):=\prod_{i=1}^3(g_i-z)\,m_y(z)
 =\sum_{i=1}^3 y_i\prod_{j\ne i}(g_j-z).
\]
Since $y_i>0$ and $g_1<g_2<g_3$, the usual sign change across the poles gives two simple roots
\[
 z_1\in(g_1,g_2),\qquad z_2\in(g_2,g_3).
\]
Expanding the numerator and using \eqref{eq:ternary-moment-translation} gives
\begin{align*}
 q_y(z)
 &=z^2-(e_1-U_1)z+e_2-e_1U_1+U_2,\\
 q_y(e_1-\lambda)
 &=\lambda^2-(e_1+U_1)\lambda+(e_2+U_2)
 =p(\lambda).
\end{align*}
Consequently the two spectral systems are related by the exact affine transformation
\begin{equation}\label{eq:ternary-additive-root-transform}
 u=e_1-z_2,\qquad v=e_1-z_1.
\end{equation}
In particular,
\[
 g_1<z_1<g_2<z_2<g_3
 \quad\Longleftrightarrow\quad
 \alpha<u<\beta<v<\gamma.
\]
Thus the ternary trace--determinant and root coordinates coincide with the two-root additive
moment and root coordinates expressed in the positive relaxation variables $u,v$.  The present theorem also applies when the unique real generator $g_1$ is nonpositive.  The same
formulas describe the coincident-friction cases: equality of two pair frictions is equivalent, after
the corresponding relabeling, to equality of two real generators; in this case the interlacing
root chart degenerates and the triangular scalar reduction of Section~\ref{sec:restart} applies.

\subsection{Spectral roots and invariant rectangles}

Let
\[
p(\lambda)=\lambda^2-\Lambda\lambda+\Pi
\]
and denote the two roots by
\[
u:=\lambda_1,
\qquad
v:=\lambda_2,
\qquad u<v.
\]
The ordering of the roots follows directly from the simplex variables.  Substitution of the
barycentric formulas for $\Lambda$ and $\Pi$ gives
\begin{equation}\label{eq:polynomial-endpoint-signs}
\begin{aligned}
p(\alpha)&=(\beta-\alpha)(\gamma-\alpha)y_3,\\
p(\beta)&=-(\beta-\alpha)(\gamma-\beta)y_2,\\
p(\gamma)&=(\gamma-\alpha)(\gamma-\beta)y_1.
\end{aligned}
\end{equation}
Hence for an interior state $p(\alpha)>0$, $p(\beta)<0$, and $p(\gamma)>0$.  Since $p$ is a
monic quadratic, its two simple real roots satisfy
\[
\alpha<u<\beta<v<\gamma.
\]
Moreover
\[
\Lambda=u+v,
\qquad
\Pi=uv.
\]
Evaluating $p(t)=(t-u)(t-v)$ in \eqref{eq:polynomial-endpoint-signs} gives the inverse composition map
\begin{align}
y_1&=\frac{(\gamma-u)(\gamma-v)}{(\gamma-\alpha)(\gamma-\beta)},\\
y_2&=\frac{(\beta-u)(v-\beta)}{(\beta-\alpha)(\gamma-\beta)},\\
y_3&=\frac{(u-\alpha)(v-\alpha)}{(\beta-\alpha)(\gamma-\alpha)}.
\end{align}
Since $y'\mapsto V$ is an invertible affine map and
$\det D_{(u,v)}(u+v,uv)=u-v\neq0$, these formulas show that the open simplex is analytically
diffeomorphic to
\[
(\alpha,\beta)\times(\beta,\gamma).
\]
The Neumann condition passes exactly to the roots on this separated region.  Indeed,
\[
\partial_\nu\Lambda=\partial_\nu u+\partial_\nu v=0,
\qquad
\partial_\nu\Pi=v\,\partial_\nu u+u\,\partial_\nu v=0.
\]
Since $u\neq v$, this $2\times2$ linear system gives
\begin{equation}\label{eq:root-neumann}
\partial_\nu u=\partial_\nu v=0.
\end{equation}
This observation will also be used below to eliminate the boundary entropy flux.

The scalar spectral equations may be written in drift form, and a direct positive-part energy
comparison gives forward invariance of the coordinate ranges in the maximal-$L^p$ strong class.
The precise terminal-strip statement used later is the following.

\begin{lemma}[Forward-invariant separated spectral rectangle]\label{lem:forward-spectral-rectangle}
Let $y$ be the maximal strong solution, let $t_*>0$, and suppose
$y_i(t_*,x)>0$ for every species and every $x\in\overline\Omega$.  Define
\[
 a_*:=\min_{\overline\Omega}u(t_*),\qquad
 b_*:=\max_{\overline\Omega}u(t_*),\qquad
 c_*:=\min_{\overline\Omega}v(t_*),\qquad
 d_*:=\max_{\overline\Omega}v(t_*).
\]
Then
\[
 \alpha<a_*\le b_*<\beta<c_*\le d_*<\gamma
\]
and
\begin{equation}\label{eq:forward-spectral-rectangle}
 a_*\le u(t,x)\le b_*,\qquad
 c_*\le v(t,x)\le d_*
\end{equation}
for every $(t,x)\in[t_*,T_{\max})\times\overline\Omega$.  Moreover, if
\[
 \underline u(t):=\min_{\overline\Omega}u(t),\qquad
 \overline u(t):=\max_{\overline\Omega}u(t)
\]
and similarly for $v$, then for $t_*\le t_1\le t_2<T_{\max}$,
\begin{equation}\label{eq:root-extrema-monotonicity-ternary}
 \underline u(t_1)\le\underline u(t_2)\le\overline u(t_2)\le\overline u(t_1),
 \qquad
 \underline v(t_1)\le\underline v(t_2)\le\overline v(t_2)\le\overline v(t_1).
\end{equation}
In particular, both coordinatewise root oscillations are nonincreasing.  Consequently, if all
three conserved species masses are positive, then for every $\tau>0$ one compact separated
rectangle contains the spectral range on the whole tail
$[\tau,T_{\max})\times\overline\Omega$.
\end{lemma}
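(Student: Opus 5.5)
The plan is to derive scalar drift--diffusion equations for the two roots and apply a positive-part energy (Stampacchia) comparison on each compact time interval $[t_*,T']\subset[t_*,T_{\max})$.

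\emph{Strict interlacing and root equations.} Since $y(t_*)>0$ on $\overline\Omega$, \eqref{eq:polynomial-endpoint-signs} gives $\alpha<u<\beta<v<\gamma$ pointwise. Because $\overline\Omega$ is compact and $u,v$ are continuous, this yields the strict chain for $a_*,b_*,c_*,d_*$.

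All three masses are positive, since $y(t_*)>0$ and masses are conserved by \eqref{eq:species-mass-conservation}. Hence, by the positivity statement of Section~\ref{sec:local}, $y_i(t,x)>0$ for all $t\ge t_*$. On any compact $[t_*,T']\times\overline\Omega$, continuity of $y$ (from $y\in C([0,T'];X_{\gamma,p})\hookrightarrow C^{1}$ in space for $p>d+2$) then gives a uniform gap $v-u\ge\delta>0$ and $u\ge\alpha$. The roots are analytic functions of $V$ there, so $u,v$ inherit the $W^1_p L_p\cap L_pW^2_p$ regularity.

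Differentiate $u^2-\Lambda u+\Pi=0$ to get $(u-v)u_t=u\Lambda_t-\Pi_t$. Substitute \eqref{eq:Lambda}--\eqref{eq:Pi} and use the identity $\Lambda\nabla\Lambda-\nabla\Pi=u\nabla u+v\nabla v$. A short computation then gives
\[
u_t=\frac1v\Delta u-\frac{u^2+v^2}{uv^2(u-v)}\,\nabla u\cdot\nabla v ,
\]
and symmetrically for $v$ with $u,v$ interchanged. Rewriting $\frac1v\Delta u=\Div(\nabla u/v)+v^{-2}\nabla u\cdot\nabla v$ puts this in the form
\[
u_t=\Div\Bigl(\frac{\nabla u}{v}\Bigr)+c\cdot\nabla u ,
\]
where $c$ is bounded on $[t_*,T']\times\overline\Omega$ by the gap bound and the spatial $C^1$ bound on $y$.

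\emph{Comparison.} Test with $w=(u-b_*)_+$. The boundary term vanishes by \eqref{eq:root-neumann}. Since $1/v\ge1/\gamma$, Young's inequality gives
\[
\tfrac12\tfrac{\dd}{\dd t}\int_\Omega w^2+\int_\Omega\frac{|\nabla w|^2}{v}\le\tfrac12\int_\Omega\frac{|\nabla w|^2}{v}+C\int_\Omega w^2 .
\]
Because $w(t_*)=0$, Gronwall yields $w\equiv0$. The same argument applied to $(a_*-u)_+$, $(v-d_*)_+$ and $(c_*-v)_+$ gives \eqref{eq:forward-spectral-rectangle} on $[t_*,T']$, and $T'$ was arbitrary. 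The chain rule for $(\cdot)_+$ and the time differentiation of $\int w^2$ are justified in the $W^{1,2}_p$ class by standard Lions--Magenes arguments.

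\emph{Monotonicity and tail.} For $t_*\le t_1\le t_2$, all species are positive at $t_1$, so the statement applies with $t_*$ replaced by $t_1$. This gives \eqref{eq:root-extrema-monotonicity-ternary}, and the oscillations are therefore nonincreasing. If all masses are positive and $\tau>0$, then $y(\tau)>0$ on $\overline\Omega$ by Section~\ref{sec:local}. Taking $t_*=\tau$ yields one compact separated rectangle for the whole tail.

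The main obstacle I expect is not the computation but the justification of the energy test: the drift $c$ must be bounded and the equation must hold in a form that admits Stampacchia truncation. My first line of attack is to restrict to compact subintervals, where positivity together with the $C^1$ embedding of the trace space gives separation and gradient bounds. This way no global-in-time control is needed for the invariance argument itself.
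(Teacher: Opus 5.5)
Your proof is correct, and its core matches the paper's. Both use the divergence-form root equations with drift $h(u,v)\nabla v$; your coefficient $v^{-2}+\frac{u^2+v^2}{uv^2(v-u)}$ simplifies exactly to $h$. Both then use the positive-part $L^2$ test with $\partial_\nu u=\partial_\nu v=0$, Gronwall from a vanishing initial truncation, a restart at $t_1$ for the monotonicity, and the positivity theorem for the tail.

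The one genuine difference is how the separated root chart is secured on $[t_*,T']$. You invoke the HMPW positivity theorem: positivity at $t_*$ plus mass conservation gives $y_i>0$ for all $t>0$. Compactness of $[t_*,T']\times\overline\Omega$ then traps the roots a priori in a compact subset of $(\alpha,\beta)\times(\beta,\gamma)$. This gives the gap, the bounded drift and the regularity transfer at once, so you can test directly at the sharp levels $a_*,b_*,c_*,d_*$.

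The paper does not use global positivity for the invariance part. It enlarges the rectangle to $\mathcal R_\delta$ and lets $T_\delta$ be the first exit time. The same energy test shows that the range stays in the $\delta/2$-rectangle before $T_\delta$, and continuity of $V$ then rules out $T_\delta<T$. Finally it lets $\delta\downarrow0$.

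Your route is shorter and shows that the exit-time argument is redundant once positivity for all $t>0$ is available. The paper's route makes the first two assertions depend only on interiority at the single time $t_*$ and continuity in the maximal-$L^p$ strong class. The cited positivity theorem, which by the paper's own remark uses positive-time classicality, enters only in the final tail assertion. This matches the paper's intent that the truncation and invariance arguments rest on the strong class rather than on classical regularity.
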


\begin{proof}
Strict positivity at $t=t_*$ and the inverse formulas above imply pointwise
$\alpha<u<\beta<v<\gamma$.  Compactness of $\overline\Omega$ therefore gives the strict endpoint
separation in the statement.

We prove forward invariance directly in the maximal-$L^p$ strong class.  Fix
$T\in(t_*,T_{\max})$ and choose $\delta>0$ so small that
\[
 \mathcal R_\delta
 :=[a_*-\delta,b_*+\delta]\times[c_*-\delta,d_*+\delta]
 \Subset(\alpha,\beta)\times(\beta,\gamma).
\]
Let $T_\delta\in(t_*,T]$ be the supremum of the times $s$ for which the ordered roots are
defined on $[t_*,s]$ and their range lies in $\mathcal R_\delta$, with $T_\delta=T$ if this
holds up to $T$.  Since $\mathcal R_\delta$ is compactly contained in the open root chart and the
initial range is a distance $\delta$ from its boundary, continuity gives $T_\delta>t_*$.  Suppose
that $T_\delta<T$ and fix $T'\in(t_*,T_\delta)$.

On $[t_*,T']$ the root map is smooth on a neighborhood of $\mathcal R_\delta$.  The maximal
strong regularity of $y$, the affine relation between $y'$ and $V$, and the Sobolev/Bochner chain
rule therefore give
\begin{equation}\label{eq:root-strong-class-rectangle}
 (u,v)\in W^{1,p}(t_*,T';L^p(\Omega;\mathbb R^2))
 \cap L^p(t_*,T';W^{2,p}(\Omega;\mathbb R^2))
 \cap C([t_*,T'];C^1(\overline\Omega;\mathbb R^2)).
\end{equation}
The root equations hold distributionally and $\partial_\nu u=\partial_\nu v=0$ in the trace
sense.  In divergence form they read
\begin{align*}
 u_t-\Div\!\left(\frac1v\nabla u\right)+W_u\cdot\nabla u&=0,
 &W_u&=-h(u,v)\nabla v,\\
 v_t-\Div\!\left(\frac1u\nabla v\right)+W_v\cdot\nabla v&=0,
 &W_v&= h(u,v)\nabla u.
\end{align*}
On $\mathcal R_\delta$, the scalar diffusion coefficients are uniformly positive and $h$ is
bounded; by \eqref{eq:root-strong-class-rectangle}, $W_u,W_v$ are bounded on the strip.

Put $M_u=b_*+\delta/2$ and $r=(u-M_u)_+$.  Testing the first equation by $r$ and using the
homogeneous Neumann trace gives, for almost every time,
\[
 \frac12\frac{\dd}{\dd t}\|r\|_2^2
 +a_0\|\nabla r\|_2^2
 \le \|W_u\|_\infty\int_\Omega r|\nabla r|
 \le \frac{a_0}{2}\|\nabla r\|_2^2+C\|r\|_2^2,
\]
where $a_0>0$ depends only on $\mathcal R_\delta$.  Since $r(t_*)=0$, Gronwall's lemma gives
$r\equiv0$.  Applying the same argument to $(a_*-\delta/2-u)_+$, and then to the corresponding
upper and lower truncations of $v$, yields
\[
 a_*-\frac\delta2\le u\le b_*+\frac\delta2,
 \qquad
 c_*-\frac\delta2\le v\le d_*+\frac\delta2
 \quad\text{on }[t_*,T']\times\overline\Omega.
\]
All tests are admissible in \eqref{eq:root-strong-class-rectangle}: one may use the standard
Lipschitz positive-part approximation, and the time-slice identity follows from the usual
Bochner chain rule.  No classical comparison theorem is used.

Letting $T'\uparrow T_\delta$ and using continuity of $V$ shows that the limiting state lies in
the trace--determinant image of the strictly smaller rectangle obtained by replacing $\delta$ with
$\delta/2$.  That image is compactly contained in the open root chart.  Hence continuity of $V$
keeps the state inside the chart and inside $\mathcal R_\delta$ for a short time beyond
$T_\delta$, contradicting the definition of $T_\delta$.  Thus $T_\delta=T$.  Since $T<T_{\max}$ was
arbitrary and the conclusion holds for every sufficiently small $\delta$, letting
$\delta\downarrow0$ proves \eqref{eq:forward-spectral-rectangle} on the whole preterminal tail.

For \eqref{eq:root-extrema-monotonicity-ternary}, restart the same strong-class invariant-rectangle
argument at an arbitrary time $t_1\ge t_*$, using the extrema
$\underline u(t_1),\overline u(t_1),\underline v(t_1),\overline v(t_1)$ as the rectangle endpoints.
They remain strictly inside the interlacing intervals by \eqref{eq:forward-spectral-rectangle}.
The restarted comparison gives the four monotonicity inequalities at every later time $t_2$.

If all conserved masses are positive, no component is identically zero and the Maxwell--Stefan
positivity theorem gives $y_i(t,x)>0$ for every $t>0$; see
\cite[Theorem~3.4]{HerbergMeyriesPruessWilke2017}.  Applying the first part at any
$t_*<\tau$ gives the final tail statement.
\end{proof}

\begin{remark}[Scope of spectral coordinates]
The $V$-system is smooth on the entire closed physical triangle because $\Pi$ has a fixed positive
lower bound.  Quantities containing $(v-u)^{-1}$, however, should be used only on a positive-time
compact spectral rectangle (or under an a priori interior-range hypothesis).
\end{remark}
\subsection{Exact scalar spectral equations}

The scalar equations follow directly from the two equations for $(\Lambda,\Pi)$.  Since
$\Lambda=u+v$ and $\Pi=uv$,
\[
\Lambda\nabla\Lambda-\nabla\Pi
=u\nabla u+v\nabla v,
\]
and therefore \eqref{eq:Lambda}--\eqref{eq:Pi} become
\begin{align*}
u_t+v_t
&=\Div\left(\frac1v\nabla u+\frac1u\nabla v\right),\\
v u_t+u v_t&=\Delta u+\Delta v.
\end{align*}
Expanding only the first divergence and solving this $2\times2$ linear system for $u_t,v_t$
yields the capacity equations
\begin{align}
v\,u_t-\Delta u
=\mathcal H(u,v)\,\nabla u\cdot\nabla v,\label{eq:capacity-u}\\
u\,v_t-\Delta v
=-\mathcal H(u,v)\,\nabla u\cdot\nabla v,\label{eq:capacity-v}
\end{align}
with
\[
\mathcal H(u,v)=\frac{u^2+v^2}{uv(v-u)}.
\]
Rewriting these two equations in divergence form gives
\begin{align}
u_t-\Div\left(\frac1v\nabla u\right)
&=h(u,v)\,\nabla u\cdot\nabla v,\label{eq:spec-div-u}\\
v_t-\Div\left(\frac1u\nabla v\right)
&=-h(u,v)\,\nabla u\cdot\nabla v,\label{eq:spec-div-v}
\end{align}
where
\[
h(u,v)=\frac{u+v}{uv(v-u)}.
\]
The coefficient also admits the decomposition
\[
\mathcal H(u,v)=\frac{2}{v-u}+\frac1u-\frac1v.
\]

On every compact positive-time strip on which the root range lies in a separated rectangle,
these identities hold at the maximal-$L^p$ strong-solution level without assuming classical
regularity.  Indeed, the root map is smooth on such a rectangle; hence the affine strong regularity
of $V$ and the Sobolev/Bochner chain rules give
\begin{equation}\label{eq:strong-root-class}
 (u,v)\in W^{1,p}(I;L^p(\Omega;\mathbb R^2))
 \cap L^p(I;W^{2,p}(\Omega;\mathbb R^2))
 \cap C(\overline I;C^1(\overline\Omega;\mathbb R^2)),
\end{equation}
where the last embedding uses $p>d+2$.  The preceding algebra may therefore be interpreted
distributionally, and \eqref{eq:root-neumann} holds in the Sobolev trace sense.  This observation
will be used below to justify all nonlinear root tests without appealing to positive-time
classical Schauder regularity.

The equations \eqref{eq:capacity-u}--\eqref{eq:capacity-v} have the following properties:
\begin{itemize}
\item the spatial principal operator is exactly the scalar Laplacian in both equations;
\item the only principal time coefficients are the positive capacities $v$ and $u$;
\item there are no $|\nabla u|^2$ or $|\nabla v|^2$ source terms;
\item the only quadratic source is the antisymmetric mixed product $\nabla u\cdot\nabla v$.
\end{itemize}
Adding the capacity equations gives
\[
v u_t+u v_t-\Delta(u+v)=0,
\]
hence the exact cancellation
\[
(uv)_t-\Delta(u+v)=0,
\]
which is precisely \eqref{eq:Pi}.

On a fixed compact spectral rectangle,
\begin{align}
|\mathcal H(u,v)\nabla u\cdot\nabla v|&\le C|\nabla u|\,|\nabla(u,v)|,\\
|\mathcal H(u,v)\nabla u\cdot\nabla v|&\le C|\nabla v|\,|\nabla(u,v)|.
\end{align}
Thus the mixed term can be estimated with either $|\nabla u|$ or $|\nabla v|$ as the selected
gradient factor.

\section{Spectral entropies and stabilized one-sided truncations}\label{sec:entropy}

Fix a compact separated spectral rectangle
\begin{equation}\label{eq:sep-rectangle}
\alpha<u_-\le u\le u_+<\beta<v_-\le v\le v_+<\gamma,
\end{equation}
write
\[
\mathcal R:=[u_-,u_+]\times[v_-,v_+],
\qquad
\delta:=v_--u_+>0,
\]
and note that $\mathcal R$ has positive distance from all spectral collision and endpoint sets.
By the inverse composition formulas, compactness of $\mathcal R$ also gives a constant
$m_{\mathcal R}>0$ such that
\[
 y_i(u,v)\ge m_{\mathcal R},\qquad i=1,2,3,\quad (u,v)\in\mathcal R.
\]
In particular, the mixing entropy and all of its derivatives used below are uniformly bounded on a
fixed neighborhood of $\mathcal R$.  All constants below depend only on $\mathcal R$ and the
fixed ternary frictions.

\subsection{The spectral entropy-production operator}

Write
\[
a_1=\frac1v,\qquad a_2=\frac1u.
\]
For any $F\in C^2$ of the spectral variables, multiplication of the spectral equations by
$F_u$ and $F_v$ gives
\begin{equation}\label{eq:scalar-test-identity-stab}
\partial_tF-\Div\left(a_1F_u\nabla u+a_2F_v\nabla v\right)+\mathscr Q[F]=0,
\end{equation}
where
\begin{equation}\label{eq:Q-def-stab}
\begin{aligned}
\mathscr Q[F]
={}&a_1F_{uu}|\nabla u|^2+a_2F_{vv}|\nabla v|^2\\
&+(a_1+a_2)\left(
F_{uv}-\frac{F_u-F_v}{v-u}
\right)\nabla u\cdot\nabla v.
\end{aligned}
\end{equation}
Hence the Euler--Poisson--Darboux relation
\begin{equation}\label{eq:EPD-stab}
F_{uv}=\frac{F_u-F_v}{v-u}
\end{equation}
cancels all mixed quadratic production.

The relation with the multi-root EPD system introduced in~\cite{BotheAdditive2026} is exact.
Accordingly, the EPD identity is the two-root form of the identity from the additive-friction
analysis, rather than a separate ternary identity.  The required separated root geometry must still
be verified for arbitrary positive ternary frictions.  Under
\eqref{eq:ternary-additive-root-transform}, write
$F(u,v)=\Psi(e_1-v,e_1-u)$.  Then
\[
 (z_2-z_1)\Psi_{z_1z_2}=\Psi_{z_1}-\Psi_{z_2}
 \quad\Longleftrightarrow\quad
 F_{uv}=\frac{F_u-F_v}{v-u}.
\]
Thus \eqref{eq:EPD-stab} is precisely the two-root specialization of the multi-EPD equations,
expressed in the positive relaxation variables.

\subsection{The ordinary mixing entropy as an EPD entropy}

Let
\[
\ell_1=\gamma,\qquad \ell_2=\beta,\qquad \ell_3=\alpha,
\qquad
c_i=\frac1{\prod_{j\ne i}(\ell_i-\ell_j)}.
\]
The inverse spectral map is
\begin{equation}\label{eq:lagrange-inverse-y}
y_i=c_i(\ell_i-u)(\ell_i-v),\qquad i=1,2,3.
\end{equation}
For
\[
\mathfrak h(u,v)=\sum_{i=1}^3 y_i(u,v)\log y_i(u,v)
\]
put $f_i=y_i\log y_i$.  Since
\[
(y_i)_u=-c_i(\ell_i-v),\qquad
(y_i)_v=-c_i(\ell_i-u),\qquad
(y_i)_{uv}=c_i,
\]
we have $(y_i)_u-(y_i)_v=c_i(v-u)$ and
$(y_i)_u(y_i)_v/y_i=c_i$.  Hence
\[
(f_i)_{uv}
=c_i(\log y_i+2),\qquad
\frac{(f_i)_u-(f_i)_v}{v-u}=c_i(\log y_i+1),
\]
which proves
\[
(f_i)_{uv}-\frac{(f_i)_u-(f_i)_v}{v-u}=c_i.
\]
Since the barycentric weights satisfy $\sum_i c_i=0$,
\begin{equation}\label{eq:mixing-entropy-EPD}
\mathfrak h_{uv}=\frac{\mathfrak h_u-\mathfrak h_v}{v-u}.
\end{equation}
Thus the physical mixing entropy belongs exactly to the spectral EPD family.

If
\[
P(t)=(t-\alpha)(t-\beta)(t-\gamma),
\]
then the partial-fraction identity
\[
\frac1{P(t)}=\sum_{i=1}^3\frac{c_i}{t-\ell_i}
\]
gives
\[
\mathfrak h_{uu}=\frac{v-u}{P(u)},
\qquad
\mathfrak h_{vv}=-\frac{v-u}{P(v)}.
\]
Therefore
\begin{equation}\label{eq:mixing-spectral-production}
\begin{aligned}
\mathscr Q[\mathfrak h]
={}&
\frac{v-u}{v(u-\alpha)(\beta-u)(\gamma-u)}|\nabla u|^2\\
&+
\frac{v-u}{u(v-\alpha)(v-\beta)(\gamma-v)}|\nabla v|^2.
\end{aligned}
\end{equation}
In particular,
\begin{equation}\label{eq:mixing-coercive}
\mathscr Q[\mathfrak h]\ge
\kappa_{\mathcal R}\bigl(|\nabla u|^2+|\nabla v|^2\bigr)
\end{equation}
on \eqref{eq:sep-rectangle}.

\subsection{EPD entropies are Hessian symmetrizers}

The EPD identity also implies a Hessian symmetrization property used in the continuation argument.  Let
$F(V)=\Phi(u,v)$, where $V=(\Lambda,\Pi)=(u+v,uv)$, and write
\[
J=\begin{pmatrix}1&1\\ v&u\end{pmatrix}
=\bigl[V_u,V_v\bigr].
\]
The columns are eigenvectors of the trace--determinant diffusion matrix:
\begin{equation}\label{eq:AJ-spectral}
A(V)J=J\begin{pmatrix}v^{-1}&0\\0&u^{-1}\end{pmatrix}.
\end{equation}
If $\Phi$ satisfies the EPD equation, then
\[
\Phi_{uv}=\frac{\Phi_u-\Phi_v}{v-u}=F_\Pi.
\]
On the other hand the chain rule gives
\[
\Phi_{uv}=V_u^{\sf T}D_V^2F(V)V_v+F_\Pi.
\]
Hence
\begin{equation}\label{eq:H-orthogonality}
V_u^{\sf T}D_V^2F(V)V_v=0.
\end{equation}
Since $V_{uu}=V_{vv}=0$,
\[
V_u^{\sf T}D_V^2F(V)V_u=\Phi_{uu},\qquad
V_v^{\sf T}D_V^2F(V)V_v=\Phi_{vv}.
\]
Therefore
\begin{equation}\label{eq:H-spectral-diag}
J^{\sf T}D_V^2F(V)J
=\begin{pmatrix}\Phi_{uu}&0\\0&\Phi_{vv}\end{pmatrix}.
\end{equation}
Combining \eqref{eq:AJ-spectral} and \eqref{eq:H-spectral-diag} gives the exact Hessian
symmetrizer identity
\begin{equation}\label{eq:Hessian-symmetrizer}
D_V^2F(V)A(V)=A(V)^{\sf T}D_V^2F(V).
\end{equation}
Thus the EPD equation is equivalent, for separated roots, to orthogonality of the two diffusion
eigendirections in the Hessian metric of $F$.

For the ordinary mixing entropy $F=\mathfrak h$, the explicit spectral Hessian coefficients are
\[
\mathfrak h_{uu}
=\frac{v-u}{(u-\alpha)(\beta-u)(\gamma-u)}>0,
\]
\[
\mathfrak h_{vv}
=\frac{v-u}{(v-\alpha)(v-\beta)(\gamma-v)}>0.
\]
Consequently $H(V):=D_V^2\mathfrak h(V)$ is positive definite.  Moreover,
using \eqref{eq:AJ-spectral} and \eqref{eq:H-spectral-diag},
\[
J^{\sf T}H(V)A(V)J
=\begin{pmatrix}\mathfrak h_{uu}/v&0\\0&\mathfrak h_{vv}/u\end{pmatrix}>0.
\]
Since $J$ is invertible for $u<v$, this proves
\begin{equation}\label{eq:mixing-H-positive}
H(V)>0,\qquad H(V)A(V)=A(V)^{\sf T}H(V)>0
\end{equation}
on every compact separated spectral rectangle.  This gives a canonical state-dependent
Friedrichs symmetrizer for arbitrary positive ternary frictions.

\subsection{Pure \texorpdfstring{$C^{1,1}$}{C1,1} one-sided EPD truncations}

For $k\in[u_-,u_+]$ set
\[
P_{u,k}(u,v)
=\frac12(v-k)(u-k)^2-\frac16(u-k)^3.
\]
Then
\[
(P_{u,k})_{uu}=v-u,\qquad
(P_{u,k})_{vv}=0,\qquad
(P_{u,k})_{uv}
=\frac{(P_{u,k})_u-(P_{u,k})_v}{v-u},
\]
and $P_{u,k}=DP_{u,k}=0$ on $u=k$.  Define
\[
\Phi_{u,k}^+=\mathbf1_{\{u>k\}}P_{u,k},
\qquad
\Phi_{u,k}^-=\mathbf1_{\{u<k\}}P_{u,k}.
\]
Equivalently, with $s_+=\max\{s,0\}$,
\begin{align*}
\Phi_{u,k}^+
&=\frac12(v-k)(u-k)_+^2-\frac16(u-k)_+^3,\\
\Phi_{u,k}^-
&=\frac12(v-k)(k-u)_+^2+\frac16(k-u)_+^3.
\end{align*}
In particular their first derivatives are globally Lipschitz across $u=k$.
These functions are $C^{1,1}$ on the compact state rectangle.  We use their explicit weak
derivatives rather than invoking a generic $C^{1,1}$ chain rule.  On the active side write $F=P_{u,k}$ and on the inactive
side write $0$.  Since
\[
F=0,\qquad DF=0\qquad\text{on }u=k,
\]
one has the explicit globally Lipschitz first derivative
\[
D\Phi_{u,k}^{\pm}=\mathbf1_{\{\pm(u-k)>0\}}DF
\]
and its weak target derivative is exactly
\begin{equation}\label{eq:weak-Hessian-trunc}
D(D\Phi_{u,k}^{\pm})
=\mathbf1_{\{\pm(u-k)>0\}}D^2F
\quad\text{a.e.},
\end{equation}
with no hypersurface measure on $u=k$.  After composition with a separated maximal-$L^p$ strong root solution, the precise level-set fact used is
Stampacchia's identity
\[
\nabla u=0\qquad\text{a.e. on }\{u=k\}.
\]
Together with $F=DF=0$ on the target interface, this shows directly that the composed weak
Hessian has the active-side value and no interface measure.  Thus the active-side Hessian formula
is valid almost everywhere.  Generic state-space mollification is not used, since it would in
general destroy the exact EPD identity.

For $r=(u-k)_+$,
\[
\Phi_{u,k}^+
=r^2\left(\frac{v-u}{2}+\frac r3\right),
\]
whereas for $r=(k-u)_+$,
\[
\Phi_{u,k}^-
=r^2\left(\frac{v-u}{2}-\frac r3\right).
\]
For the minus case, the lower bound follows from the fact that, on its active set,
$r=k-u<v-u$ because $k\le u_+<v_-\le v$.  Hence
\[
\frac{v-u}{2}-\frac r3\ge\frac{v-u}{6}\ge\frac\delta6,
\]
while the plus coefficient is at least $(v-u)/2\ge\delta/2$.  Compactness of the rectangle gives
matching upper bounds.  Thus for both signs
\begin{equation}\label{eq:Phi-u-comparable}
c_0r^2\le\Phi_{u,k}^{\pm}\le C_0r^2,
\end{equation}
and the EPD cancellation yields
\begin{equation}\label{eq:pure-u-production}
\mathscr Q[\Phi_{u,k}^{\pm}]
=
\frac{v-u}{v}|\nabla r|^2
\ge c_1|\nabla r|^2.
\end{equation}
Furthermore
\begin{equation}\label{eq:pure-u-derivative-size}
|(\Phi_{u,k}^{\pm})_u|\le Cr,\qquad
|(\Phi_{u,k}^{\pm})_v|\le Cr^2.
\end{equation}

For $k\in[v_-,v_+]$ the symmetric upper-root polynomial
\[
P_{v,k}(u,v)
=\frac12(k-u)(v-k)^2+\frac16(v-k)^3
\]
gives one-sided $C^{1,1}$ truncations $\Phi_{v,k}^{\pm}$.  The same interval comparison gives
\[
c_0r^2\le \Phi_{v,k}^{\pm}\le C_0r^2,
\]
with constants uniform in $k$: for the active plus truncation $r=v-k<v-u$ because $k>u$, while
for the active minus truncation the coefficient is larger.  Moreover
\begin{equation}\label{eq:pure-v-production}
\mathscr Q[\Phi_{v,k}^{\pm}]
=
\frac{v-u}{u}|\nabla(v-k)_\pm|^2,
\end{equation}
and the selected first derivative has size $O(r)$ whereas the opposite first derivative has
size $O(r^2)$.

\begin{remark}[Two-root specialization of the multi-EPD truncations]
Under the affine transformation \eqref{eq:ternary-additive-root-transform}, the general additive
one-sided root polynomial
\[
 (-1)^{i-1}\int_k^{z_i}(z_i-\sigma)\prod_{j\ne i}(z_j-\sigma)\,\dd\sigma
\]
from~\cite{BotheAdditive2026}, with two root variables, becomes after the corresponding affine
level change exactly one of the cubic polynomials $P_{u,k}$ or $P_{v,k}$ above.  Thus the pure ternary truncations coincide with the $N=3$ specialization of the multi-EPD family
in the relaxation-root coordinates.  The mixing-entropy product used below is a
stabilization of this exact EPD truncation and is not itself an exact EPD solution.
\end{remark}

\subsection{Entropy-stabilized truncations}

Choose $C_{\mathcal R}$ so that
\[
\widehat{\mathfrak h}:=\mathfrak h+C_{\mathcal R}\ge1
\]
on the compact rectangle.  Put $q=r^2/2$ and define
\begin{equation}\label{eq:stabilized-u-entropy}
\Psi_{u,k}^{\pm}
=
\Phi_{u,k}^{\pm}
+\varepsilon q\,\widehat{\mathfrak h}.
\end{equation}
The same construction is used for $v$.  Again $\Psi\in C^{1,1}$ and
\begin{equation}\label{eq:stabilized-comparable}
cr^2\le\Psi\le Cr^2.
\end{equation}

Because $q=q(u)$, the product expansion is exact:
\begin{equation}\label{eq:product-Q-exact}
\begin{aligned}
\mathscr Q[q\widehat{\mathfrak h}]
={}&
q\,\mathscr Q[\mathfrak h]
+\widehat{\mathfrak h}\,\mathscr Q[q]
+2a_1q_u\mathfrak h_u|\nabla u|^2\\
&+(a_1+a_2)q_u\mathfrak h_v\,\nabla u\cdot\nabla v,
\end{aligned}
\end{equation}
where
\begin{equation}\label{eq:Qq-exact}
\mathscr Q[q]
=
a_1|\nabla r|^2
-\frac{a_1+a_2}{v-u}\,q_u\,\nabla u\cdot\nabla v,
\qquad |q_u|=r.
\end{equation}
The term $q\mathscr Q[\mathfrak h]$ supplies
\[
\frac{\kappa_{\mathcal R}}2r^2
\bigl(|\nabla u|^2+|\nabla v|^2\bigr).
\]
All possibly negative additional terms are bounded by
\[
C_1\varepsilon r|\nabla r|^2
+C_2\varepsilon r|\nabla r|\,|\nabla v|.
\]
The absorption constants can be chosen explicitly.  Let
$R_0=\sup r$ over all admissible levels on the fixed rectangle, let $c_1>0$ be the constant in
\eqref{eq:pure-u-production}, and let $\kappa_{\mathcal R}>0$ be the constant in
\eqref{eq:mixing-coercive}.  The mixed error satisfies
\[
C_2\varepsilon r|\nabla r|\,|\nabla v|
\le \frac{c_1}{4}|\nabla r|^2
+\frac{C_2^2}{c_1}\varepsilon^2r^2|\nabla v|^2.
\]
Fix
\begin{equation}\label{eq:epsilon-stabilization-choice}
0<\varepsilon_{\mathcal R}\le
\min\left\{1,\frac{c_1}{4C_1R_0},
\frac{c_1\kappa_{\mathcal R}}{4C_2^2}\right\},
\end{equation}
where the corresponding fraction is omitted if $C_1R_0=0$ or $C_2=0$.  Then the first error is bounded by
$c_1|\nabla r|^2/4$, the selected-gradient term from Young's inequality is bounded by the same
quantity, and
\[
\frac{C_2^2}{c_1}\varepsilon_{\mathcal R}^2r^2|\nabla v|^2
\le \frac{\kappa_{\mathcal R}}4\varepsilon_{\mathcal R}
r^2|\nabla v|^2
\]
is absorbed by the opposite-gradient part of
$\varepsilon q\mathscr Q[\mathfrak h]$, whose coefficient is
$\kappa_{\mathcal R}\varepsilon r^2/2$.  The remaining positive terms may be discarded.
The upper-root calculation is identical after interchanging the selected and opposite spectral
variables.  Decrease $\varepsilon_{\mathcal R}$, if necessary, to the minimum of the two admissible
choices.  This fixed parameter is used for both roots, both signs, and all admissible levels.
We have therefore proved the following lemma.
\begin{lemma}[Stabilized scalar coercivity]\label{lem:stabilized-coercivity}
For one fixed $\varepsilon_{\mathcal R}>0$, every lower-root level and either sign satisfy
\begin{equation}\label{eq:stabilized-u-coercivity}
\mathscr Q[\Psi_{u,k}^{\pm}]
\ge
c_{\mathcal R}|\nabla r|^2
+c_{\mathcal R}\varepsilon_{\mathcal R}r^2|\nabla v|^2.
\end{equation}
For upper-root truncations,
\begin{equation}\label{eq:stabilized-v-coercivity}
\mathscr Q[\Psi_{v,k}^{\pm}]
\ge
c_{\mathcal R}|\nabla r|^2
+c_{\mathcal R}\varepsilon_{\mathcal R}r^2|\nabla u|^2.
\end{equation}
\end{lemma}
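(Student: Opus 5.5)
The plan is to expand $\mathscr Q$ of the stabilized truncation by linearity,
\[
\mathscr Q[\Psi_{u,k}^{\pm}]=\mathscr Q[\Phi_{u,k}^{\pm}]+\varepsilon\,\mathscr Q[q\widehat{\mathfrak h}],
\]
and to read every term at an almost-every point of the active set. On the inactive set both $r$ and $\nabla r$ vanish a.e., so the inequality is trivial there. The interface $\{u=k\}$ carries no measure because of \eqref{eq:weak-Hessian-trunc} and Stampacchia's identity. Note also that $\mathscr Q$ is unchanged by adding the constant $C_{\mathcal R}$, so $\mathscr Q[\mathfrak h]=\mathscr Q[\widehat{\mathfrak h}]$. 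The first summand is exactly $\frac{v-u}{v}|\nabla r|^2\ge c_1|\nabla r|^2$ by \eqref{eq:pure-u-production}, since the EPD property removes all mixed production. For the second summand I will use the exact product formula \eqref{eq:product-Q-exact}. This is legitimate because $q$ depends only on $u$, so $q_v=q_{uv}=q_{vv}=0$, and the bilinear cross terms in $\mathscr Q$ reduce to the listed ones.

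Next I sort the pieces of $\varepsilon\mathscr Q[q\widehat{\mathfrak h}]$. The term $\varepsilon q\mathscr Q[\mathfrak h]$ is bounded below through \eqref{eq:mixing-coercive} by $\varepsilon\frac{\kappa_{\mathcal R}}{2}r^2(|\nabla u|^2+|\nabla v|^2)$. The term $\varepsilon\widehat{\mathfrak h}\,a_1|\nabla r|^2$ from \eqref{eq:Qq-exact} is nonnegative because $\widehat{\mathfrak h}\ge1$. The remaining three terms are
\[
-\tfrac{a_1+a_2}{v-u}\widehat{\mathfrak h}\,q_u\nabla u\cdot\nabla v,\qquad 2a_1q_u\mathfrak h_u|\nabla u|^2,\qquad (a_1+a_2)q_u\mathfrak h_v\nabla u\cdot\nabla v.
\]
On the active set $\nabla u=\pm\nabla r$ and $|q_u|=r$, and $a_i$, $\mathfrak h$, $D\mathfrak h$, $(v-u)^{-1}$ are bounded on $\mathcal R$. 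Hence these terms are bounded in absolute value by $C_1\varepsilon r|\nabla r|^2+C_2\varepsilon r|\nabla r||\nabla v|$, with $C_1,C_2$ depending only on $\mathcal R$ and the frictions, and not on $k$ or the sign.

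The absorption then follows the choice \eqref{eq:epsilon-stabilization-choice}. Since $r\le R_0$, the first error is at most $\frac{c_1}{4}|\nabla r|^2$. For the second error, Young's inequality gives $\frac{c_1}{4}|\nabla r|^2+\frac{C_2^2}{c_1}\varepsilon^2r^2|\nabla v|^2$, and the last part is at most $\frac{\kappa_{\mathcal R}}{4}\varepsilon r^2|\nabla v|^2$. Collecting terms yields
\[
\mathscr Q[\Psi_{u,k}^{\pm}]\ge\tfrac{c_1}{2}|\nabla r|^2+\tfrac{\kappa_{\mathcal R}}{4}\varepsilon r^2|\nabla v|^2,
\]
and we take $c_{\mathcal R}=\min\{c_1/2,\kappa_{\mathcal R}/4\}$. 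The upper-root case is the same with $u$ and $v$ interchanged, using \eqref{eq:pure-v-production} and $q=q(v)$. We then take the smaller of the two admissible $\varepsilon$, which preserves both inequalities because every error bound is monotone in $\varepsilon$.

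The main obstacle is not the algebra but its justification at the strong-solution level. One must check that the product rule and the a.e. active-side formulas hold for the composed $C^{1,1}$ function $\Psi$: the weak second derivatives of $q\widehat{\mathfrak h}$ must carry no interface measure, which holds because $q$ and $q_u$ vanish on $u=k$. A second point is uniformity in $k$. $R_0$ bounds $r$ uniformly, and the constants $C_1,C_2$ come only from $\sup_{\mathcal R}$ of smooth coefficients, so a single $\varepsilon_{\mathcal R}$ serves for all levels and both signs.
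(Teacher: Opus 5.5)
Your proposal is correct and takes the same approach as the paper. You use the exact product expansion \eqref{eq:product-Q-exact}, take coercivity from $\varepsilon q\,\mathscr Q[\mathfrak h]$, and bound the remaining terms by $C_1\varepsilon r|\nabla r|^2+C_2\varepsilon r|\nabla r|\,|\nabla v|$. These are absorbed via Young's inequality under the choice \eqref{eq:epsilon-stabilization-choice}, with the minimum taken over both roots, giving $c_{\mathcal R}=\min\{c_1/2,\kappa_{\mathcal R}/4\}$.
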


The corresponding entropy flux
\[
J_\Psi=\frac{\Psi_u}{v}\nabla u+\frac{\Psi_v}{u}\nabla v
\]
obeys, for a lower-root truncation,
\begin{equation}\label{eq:stabilized-flux-size}
|J_\Psi|
\le C_{\mathcal R}
\bigl(r|\nabla r|+r^2|\nabla v|\bigr),
\end{equation}
and symmetrically for $v$.  The two terms in the flux estimate correspond to the two positive terms
in \eqref{eq:stabilized-u-coercivity}; no integration by parts in the opposite spectral gradient
is needed.

\begin{lemma}[Strong root chain rule, including one-sided spectral entropies]
\label{lem:C11-entropy-chain}
Let $I=(t_-,t_+)$ be an interval with $\overline I\subset(0,T_{\max})$.  Assume that the root
range lies in $\mathcal R$ on $I$ and that $(u,v)$ has the strong regularity
\eqref{eq:strong-root-class}, satisfies
\eqref{eq:spec-div-u}--\eqref{eq:spec-div-v} distributionally, and has
$\partial_\nu u=\partial_\nu v=0$ in the Sobolev trace sense.

\emph{(i)} If $F\in C^2$ on a neighborhood of $\mathcal R$, then
\begin{equation}\label{eq:C11-entropy-distribution}
 \partial_tF-\Div\!\left(\frac{F_u}{v}\nabla u+
 \frac{F_v}{u}\nabla v\right)+\mathscr Q[F]=0
\end{equation}
holds in $\mathcal D'(I\times\Omega)$.

\emph{(ii)} Fix one selected root $\xi\in\{u,v\}$, a level $k$ in its invariant interval, and a
smooth branch function $G$ satisfying
\begin{equation}\label{eq:one-sided-branch-vanishing-ternary}
 G=0,\qquad DG=0
 \quad\text{on the target interface }\{\xi=k\}.
\end{equation}
Define either
\[
 \Psi^+=\mathbf 1_{\{\xi>k\}}G,
 \qquad\text{or}\qquad
 \Psi^-=\mathbf 1_{\{\xi<k\}}G.
\]
Then $\Psi^\pm\in C^{1,1}$, its weak target Hessian equals $D^2G$ on the active side and zero on
the inactive side, and \eqref{eq:C11-entropy-distribution} holds with $F$ replaced by $\Psi^\pm$
and with this a.e. Hessian in $\mathscr Q[\Psi^\pm]$.  No hypersurface measure is created on the
truncation interface.  In particular this applies to every pure and stabilized one-sided entropy
constructed above.

In both parts the entropy flux belongs to $L^p(I;W^{1,p}(\Omega;\mathbb R^d))$, has zero normal
trace, and for every bounded spatial cutoff $\zeta$ the map
\[
 t\longmapsto\int_\Omega\zeta(x)F(u(t,x),v(t,x))\,\dd x
\]
(and analogously with $\Psi^\pm$) is absolutely continuous.  Hence the entropy identity may be
integrated between arbitrary time slices in $\overline I$.
\end{lemma}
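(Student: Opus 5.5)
For part (i) I will work entirely in the strong class \eqref{eq:strong-root-class} and avoid any mollification in state space. Since $p>d+2$, the roots are continuous with values in $\mathcal R$, and $\nabla u,\nabla v\in C(\overline I;C(\overline\Omega))$ are bounded. Take $F\in C^2$ on a neighborhood of $\mathcal R$. The Sobolev/Bochner chain rule then gives $F(u,v)\in W^{1,p}(I;L^p)\cap L^p(I;W^{1,p})$, with $\partial_tF=F_uu_t+F_vv_t$ and $\nabla F=F_u\nabla u+F_v\nabla v$.

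Next I will examine the flux $J_F=a_1F_u\nabla u+a_2F_v\nabla v$. Its coefficients $a_iF_\bullet$ are $C^1$ functions of $(u,v)$, hence bounded with bounded gradients. The factors $\nabla u,\nabla v$ lie in $L^p(I;W^{1,p})\cap L^\infty$. The product rule therefore gives $J_F\in L^p(I;W^{1,p})$, with
\[
\Div J_F=F_u\Div(a_1\nabla u)+F_v\Div(a_2\nabla v)+(\nabla F_u\cdot a_1\nabla u+\nabla F_v\cdot a_2\nabla v).
\]
I will substitute \eqref{eq:spec-div-u}--\eqref{eq:spec-div-v}, which hold a.e. because all terms are in $L^p$. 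The pointwise algebra from \eqref{eq:scalar-test-identity-stab}--\eqref{eq:Q-def-stab} then gives the identity a.e., and a fortiori in $\mathcal D'$. The zero normal trace follows from \eqref{eq:root-neumann}, because $J_F\cdot\nu$ is a combination of $\partial_\nu u$ and $\partial_\nu v$ with continuous coefficients.

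For absolute continuity, note that $\zeta F(u,v)\in W^{1,p}(I;L^p)\subset W^{1,1}(I;L^1)$. This shows that $t\mapsto\int\zeta F$ is absolutely continuous, with derivative $\int\zeta\,\partial_tF$.

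For part (ii) I will use the explicit structure rather than a generic $C^{1,1}$ chain rule. Because $G=DG=0$ on $\{\xi=k\}$, the first derivative is $D\Psi^\pm=\mathbf1_{\{\pm(\xi-k)>0\}}DG$, which is continuous and Lipschitz on $\mathcal R$. So $\Psi^\pm\in C^{1,1}$, and its a.e. Hessian is $\mathbf1_{\pm}D^2G$. The composed first derivatives $D\Psi^\pm(u,v)$ are Lipschitz functions of a $W^{1,p}$ map. The Lipschitz chain rule (Marcus--Mizel type in the form needed here, or directly via the decomposition $\mathbf 1_\pm DG$) gives
\[
\nabla[D\Psi^\pm(u,v)]=\mathbf1_{\{\pm(\xi-k)>0\}}\bigl(D^2G(u,v)\bigr)\nabla(u,v)\quad\text{a.e.}
\]
The possibly ambiguous set $\{\xi=k\}$ is handled by Stampacchia's identity $\nabla\xi=0$ a.e. there. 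More precisely, I will write $DG(u,v)=(\xi-k)M(u,v)$ with $M$ continuous, using the smooth Hadamard factorization transversal to the interface. Then $D\Psi^\pm(u,v)=(\xi-k)_\pm M(u,v)$. Since $(\xi-k)_\pm$ is a standard Sobolev truncation and $M(u,v)$ is in $W^{1,p}$ where needed, the product rule applies. On the level set the term $\nabla(\xi-k)_\pm$ vanishes a.e., and so does the factor $(\xi-k)$, so no interface measure is created. The time derivative is handled in the same way: the function $(\xi-k)_\pm$ lies in $W^{1,p}(I;L^p)$ with the analogous formula.

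With these weak derivatives in hand, the computation of part (i) goes through verbatim with the a.e. Hessian inserted in $\mathscr Q$. The flux again lies in $L^p(I;W^{1,p})$, has vanishing normal trace, and the time slices are absolutely continuous.

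The main obstacle is the weak Hessian statement in (ii). The step that needs care is rigorously producing the factorization $DG=(\xi-k)M$ with $M$ regular enough on the separated rectangle. For the explicit cubics $P_{u,k},P_{v,k}$ and their stabilized versions this is immediate by inspection: the derivatives contain the factor $(u-k)$ or $(v-k)$, while the $q\widehat{\mathfrak h}$ terms contribute $r\,\widehat{\mathfrak h}_\bullet$ and $rr'$. I would first verify it in these concrete cases and then note that the general smooth $G$ follows by Taylor's theorem in the selected variable.
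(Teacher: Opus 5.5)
Your proposal is correct. Part (i) is the paper's argument: the chain rule in the strong class, the product rule for the flux, a.e. substitution of the root equations, the pointwise algebra, zero normal trace from $\partial_\nu u=\partial_\nu v=0$, and absolute continuity via $W^{1,p}(I;L^p)$.

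In part (ii) you reach the same conclusion with a somewhat different technical device.

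\textbf{The paper's route.} It applies the Lipschitz Sobolev chain rule to the composed vector field $D\Psi^\pm(u,v)$. It reads off the smooth chain-rule formula on the sets $\{\xi>k\}$ and $\{\xi<k\}$, which are open because $(u,v)$ is continuous. On $\{\xi=k\}$, where $D\Psi^\pm(u,v)$ vanishes, it uses Stampacchia's zero-set property componentwise.

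\textbf{Your route.} You factor $DG=(\xi-k)M$ by Taylor's formula in the selected variable. This gives $D\Psi^+(u,v)=(\xi-k)_+M(u,v)$ and $D\Psi^-(u,v)=-(k-\xi)_+M(u,v)$; note the sign in the minus case. After that you need only the scalar Stampacchia truncation rule, the smooth chain rule for $M(u,v)$, and the ordinary product rule.

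\textbf{Comparison.} Your version makes the absence of an interface measure fully explicit. It also avoids any chain rule for Lipschitz functions of a vector-valued Sobolev map. That is exactly where the Marcus--Mizel/Ambrosio--Dal Maso subtlety sits, so your passing appeal to a Marcus--Mizel-type rule should be dropped in favour of the factorization. The cost is one extra derivative on $G$: you need $M\in C^1$, i.e.\ $G\in C^3$. This is harmless here, since $G$ is smooth and the concrete cubic and stabilized branches factor by inspection. The paper's route only needs $D^2G$ bounded.

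For the time derivative, the first-order $C^1$ chain rule suffices, as the paper notes. No truncation structure is needed there.
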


\begin{proof}
Because $p>d+2$, \eqref{eq:strong-root-class} gives bounded first spatial derivatives on the
closed strip.  All state coefficients and derivatives occurring below are bounded on the compact
rectangle $\mathcal R$.  Thus $v^{-1}\nabla u,u^{-1}\nabla v\in L^p(I;W^{1,p})$, the quadratic
root terms are bounded, and every term in the root equations is represented by an $L^p$ function.

For $F\in C^2$, the Bochner chain rule gives
\[
 \partial_tF=F_u u_t+F_v v_t
 \quad\text{in }L^p(I\times\Omega),
\]
while the Sobolev spatial chain rule gives
\[
 \nabla F_u=F_{uu}\nabla u+F_{uv}\nabla v,
 \qquad
 \nabla F_v=F_{uv}\nabla u+F_{vv}\nabla v.
\]
Multiply \eqref{eq:spec-div-u} and \eqref{eq:spec-div-v} by $F_u$ and $F_v$, respectively, use
the distributional product rule, and add.  The diagonal Hessian terms give
$v^{-1}F_{uu}|\nabla u|^2+u^{-1}F_{vv}|\nabla v|^2$.  The two mixed Hessian terms contribute
$(u^{-1}+v^{-1})F_{uv}\nabla u\cdot\nabla v$, while the explicit root couplings contribute
\[
 -h(F_u-F_v)\nabla u\cdot\nabla v
 =-\left(\frac1u+\frac1v\right)
   \frac{F_u-F_v}{v-u}\nabla u\cdot\nabla v.
\]
This is exactly \eqref{eq:Q-def-stab}, proving part~(i).

For part~(ii), in target space
\[
 D\Psi^\pm=\mathbf 1_{\rm active}DG.
\]
Because $DG=0$ on the interface and $D^2G$ is bounded, this vector field is globally Lipschitz and
its weak target derivative is
\[
 D(D\Psi^\pm)=\mathbf 1_{\rm active}D^2G.
\]
There is no surface measure because the trace of $D\Psi^\pm$ has no jump.  The Lipschitz Sobolev
chain rule therefore applies to $D\Psi^\pm(u,v)$.  On the preimage of the target interface all
components of $D\Psi^\pm(u,v)$ vanish.  Stampacchia's zero-set property, applied componentwise,
then gives the same active-side Hessian formula for the spatial weak derivative almost everywhere.
The first-order Bochner time chain rule uses only that $\Psi^\pm\in C^1$ with bounded derivative.
Repeating the algebra from part~(i) proves the distributional identity.  This is the strong-class
counterpart of the identities for positive and negative parts; no classical
approximation of the nonlinear root system and no state-space mollification of the entropy are
used.

Finally, differentiating the entropy flux spatially produces only $D^2u,D^2v\in L^p$ and products
of bounded first derivatives, so the flux belongs to $L^p(I;W^{1,p})$.  Its normal trace vanishes
because $\partial_\nu u=\partial_\nu v=0$.  The composed entropy belongs to
$W^{1,p}(I;L^p(\Omega))$ with time derivative given by the chain rule; multiplying by a bounded
spatial cutoff and integrating in $x$ gives absolute continuity in time and hence integration up to
arbitrary time slices.
\end{proof}

\section{De Giorgi estimates up to the boundary}\label{sec:DG}

The De Giorgi argument in this section specializes the regularity method developed
in~\cite{BotheAdditive2026} to two roots.  We include the estimates because the unrestricted
ternary setting requires verification of the constants, boundary-cylinder estimates, and later
continuation hypotheses independently of the signs of the pair-sum generators.  No methodological
novelty is claimed for the abstract iteration.  The additional point proved here is that the exact
two-root EPD truncations and their entropy stabilization remain available without positivity of
these generators.

In addition to the quadratic Caccioppoli estimate, the entropy-stabilized truncations built from the
one-sided EPD entropies yield a weighted gradient estimate for the opposite root, with the squared
selected truncation as weight.  This additional estimate is used in the De Giorgi argument below.  We therefore do not identify the roots with an abstract
normalized De Giorgi class.  Instead we derive the
oscillation contraction from four quantitative estimates: quadratic Caccioppoli control,
a logarithmic time-propagation estimate for the scalar root equation, De Giorgi shrinking, and the
critical-mass iteration.

For $x_0\in\overline\Omega$ and $\rho>0$ put
\[
D_\rho(x_0):=B_\rho(x_0)\cap\Omega,
\qquad
Q_\rho^\Omega(t_0,x_0):=(t_0-\rho^2,t_0)\times D_\rho(x_0).
\]
We suppress the centers when no confusion can arise.

\begin{lemma}[Uniform local geometry up to the physical boundary]
\label{lem:uniform-local-geometry-DG}
There are $\rho_\Omega>0$ and constants depending only on $d$ and the $C^2$ geometry of
$\Omega$ such that, for every $x_0\in\overline\Omega$ and $0<\rho\le\rho_\Omega$:
\begin{enumerate}[label=(\roman*)]
\item $|D_\rho(x_0)|\asymp_\Omega\rho^d$.  Moreover, for every $\varepsilon>0$ there is
$\lambda_\varepsilon\in(0,1)$, uniform in $x_0$ and $\rho$, such that
\[
 |D_\rho(x_0)\setminus D_{\lambda_\varepsilon\rho}(x_0)|
 \le \varepsilon |D_\rho(x_0)|;
\]
\item the parabolic Sobolev estimate
\begin{equation}\label{eq:boundary-parabolic-sobolev-DG}
 \iint_{I\times D_\rho}|q|^{2(d+2)/d}
 \le C_\Omega
 \left(\esssup_{t\in I}\int_{D_\rho}|q(t)|^2\right)^{2/d}
 \iint_{I\times D_\rho}|\nabla q|^2
\end{equation}
holds for functions $q$ which vanish near the artificial boundary
$\partial B_\rho(x_0)\cap\Omega$, with no condition on the physical boundary (with the usual
one-dimensional interpretation when $d=1$);
\item for $w\in W^{1,1}(D_\rho)$ and $h<\ell$,
\begin{equation}\label{eq:relative-DeGiorgi-isoperimetric}
 (\ell-h)
 |\{w\le h\}\cap D_\rho|
 |\{w\ge \ell\}\cap D_\rho|
 \le C_\Omega\rho^{d+1}
 \int_{D_\rho\cap\{h<w<\ell\}}|\nabla w|\,\dd x.
\end{equation}
\end{enumerate}
\end{lemma}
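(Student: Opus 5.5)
The plan is to reduce all three items to the flat model by uniform $C^2$ boundary charts; this reduction is the step I expect to carry the real work. Since $\partial\Omega$ is compact and of class $C^2$, there are $r_0>0$ and $M<\infty$ with the following property. For every boundary point $z\in\partial\Omega$ there is a rotation $R_z$ and a $C^2$ function $\psi_z$ with $\psi_z(0)=0$, $\nabla\psi_z(0)=0$ and $\|\psi_z\|_{C^2}\le M$, such that
\[
\Omega\cap(z+R_zB_{r_0})=\bigl(z+R_z\{x_d>\psi_z(x')\}\bigr)\cap(z+R_zB_{r_0}).
\]
The flattening map $\Theta_z(x)=(x',x_d-\psi_z(x'))$ is then bi-Lipschitz on $B_{r_0}$ with constants $1\pm CMr_0$.

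We choose $\rho_\Omega\le r_0/4$ and separate two cases for $x_0$. If $\operatorname{dist}(x_0,\partial\Omega)\ge 2\rho$, then $D_\rho(x_0)$ is a full ball. Otherwise we take a nearest point $z$ and note that $D_\rho(x_0)\subset z+R_zB_{r_0}$. Under $\Theta_z$, the set $D_\rho(x_0)$ is then comparable to a domain sandwiched between two Lipschitz-graph pieces of balls. The key claim is that every such $D_\rho(x_0)$ is a uniformly John, or even uniformly Lipschitz, domain at scale $\rho$. Rescaling $x\mapsto (x-x_0)/\rho$ then produces a family of domains $\widetilde D$ contained in $B_1$, with uniform interior cone constants and uniform Lipschitz character. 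Item (i) follows directly from this. The lower bound $|D_\rho|\ge c\rho^d$ holds because $\widetilde D$ contains a ball of fixed radius, since the graph is nearly flat when $\rho\le r_0/4$. The annular estimate holds because $|D_\rho\setminus D_{\lambda\rho}|\le|B_\rho\setminus B_{\lambda\rho}|=(1-\lambda^d)|B_\rho|\le C(1-\lambda^d)|D_\rho|$, so we may pick $\lambda_\varepsilon$ with $C(1-\lambda_\varepsilon^d)\le\varepsilon$.

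For (ii), we first take $q(t)$ vanishing near $\partial B_\rho\cap\Omega$ but unrestricted on $\partial\Omega$. The first step is to extend $q(t)$ across $\partial\Omega$ by reflection in the flattened chart. This gives $Eq(t)\in W^{1,2}_0(B_{2\rho})$, with $\|Eq\|_{L^2}\le C\|q\|_{L^2(D_\rho)}$ and $\|\nabla Eq\|_{L^2}\le C\|\nabla q\|_{L^2(D_\rho)}$. No zero-mean correction is needed, because $Eq$ has compact support in the full ball. The interior-ball case needs no extension at all. We then apply the Gagliardo--Nirenberg inequality $\|w\|_{2(d+2)/d}^{2(d+2)/d}\le C\|w\|_2^{4/d}\|\nabla w\|_2^2$ for $w\in W^{1,2}_0(\mathbb R^d)$ at each time and integrate in $t$. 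Together with $|Eq|\ge|q|$ on $D_\rho$, this yields \eqref{eq:boundary-parabolic-sobolev-DG} with a scale-invariant constant. For $d=1,2$ the same Gagliardo--Nirenberg exponent is valid. The reflection constants depend only on $M$, $r_0$, so they are uniform.

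For (iii), we use scale invariance to reduce to $\rho=1$ on the rescaled family $\widetilde D$. Let $w_*=\min\{\max\{w,h\},\ell\}$. Then $\int_{\widetilde D}|\nabla w_*|=\int_{\{h<w<\ell\}}|\nabla w|$. The uniform Poincaré inequality in $L^1$ on $\widetilde D$, which holds for uniformly Lipschitz (John) domains contained in $B_1$, gives $\int|w_*-\bar w_*|\le C\int|\nabla w_*|$. Splitting according to whether $\bar w_*\ge(h+\ell)/2$ or not gives either $\int|w_*-\bar w_*|\ge\frac{\ell-h}{2}|\{w\le h\}|$ or $\int|w_*-\bar w_*|\ge\frac{\ell-h}{2}|\{w\ge\ell\}|$. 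Since both sets have measure at most $|B_1|$, in both cases the product is bounded by $C(\ell-h)^{-1}\cdot(\ell-h)\int|\nabla w_*|$ after multiplying by the other measure. Undoing the scaling produces the factor $\rho^{d+1}$.

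The main obstacle is the uniformity of the Lipschitz/John character, and hence of the Poincaré constant, for the intersections $B_\rho(x_0)\cap\Omega$ when $\partial B_\rho$ meets $\partial\Omega$ nearly tangentially. I would handle this by avoiding intersection-domain geometry altogether. Specifically, I would prove the Poincaré step on the fixed flat half-ball by transferring through $\Theta_z$. There I would compare $D_\rho$ with the sandwich $D_{\rho/C}\subset\Theta_z^{-1}(\text{half-ball})\subset D_{C\rho}$ and use the weak form of (iii) on the comparable half-ball. This presumes the De Giorgi argument tolerates the enlarged radius on the right side, which I believe it does since all later cylinders are nested with fixed ratios. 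If it does not, I would instead invoke the known uniform John property of $B_\rho\cap\Omega$ for $C^{1}$ domains at small scales.
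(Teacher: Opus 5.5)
Your parts (i) and (ii) are fine and essentially coincide with the paper: the annulus bound via $|B_\rho\setminus B_{\lambda\rho}|$, and zero extension, flattening, even reflection and the Euclidean Gagliardo--Nirenberg inequality. Your reduction of (iii) to a uniform $L^1$ Poincar\'e inequality on the caps, followed by the truncation $w_*=\min\{\max\{w,h\},\ell\}$ and scaling, is also correct.

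The gap is that this Poincar\'e inequality, which is the only nontrivial content of (iii), is never established. Your ``key claim'' that every $D_\rho(x_0)=B_\rho(x_0)\cap\Omega$ is uniformly John or Lipschitz is only asserted, and neither fallback closes it. The half-ball sandwich $D_{\rho/C}\subset\Theta_z^{-1}(\text{half-ball})\subset D_{C\rho}$ gives an inequality with the level-set measures taken in $D_{\rho/C}$ but the gradient integral over $D_{C\rho}$. That is not the statement, whatever one decides about rewriting the later shrinking lemma around such a mismatch. The second fallback is an unreferenced appeal to a ``known'' property.

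The missing idea is elementary, and it is exactly where the $C^2$ hypothesis enters. Work in a chart with $\Omega=\{x_d>\psi(x')\}$, $\psi(0)=0$, $D\psi(0)=0$, $\|D^2\psi\|\le K$, and $x_0=(0,t)$ with $0\le t\le\rho$. Set $A_\rho=B_{\rho/16}(x_0+\tfrac{\rho}{4}e_d)$, or $A_\rho=B_{\rho/16}(x_0)$ for interior balls. For $K\rho$ small, $A_\rho\subset D_\rho$ and $a_d-\psi(a')\ge\rho/8$ on $A_\rho$. Semiconvexity of $\psi$ then gives, for $a\in A_\rho$, $z\in D_\rho$ and $0\le s<1$,
\[
 (1-s)a_d+sz_d-\psi\bigl((1-s)a'+sz'\bigr)
 \ge(1-s)\bigl(a_d-\psi(a')\bigr)-\tfrac K2s(1-s)|a'-z'|^2
 \ge(1-s)\rho\bigl(\tfrac18-2K\rho\bigr)>0
\]
once $K\rho<1/16$. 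Together with convexity of $B_\rho(x_0)$, each cap is star-shaped with respect to every point of a ball of radius $\rho/16$, uniformly in $x_0$ and $\rho$.

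Now integrate $|f(z)-f(a)|\le2\rho\int_0^1|\nabla f((1-s)a+sz)|\dd s$ over $a\in A_\rho$ and $z\in D_\rho$. Change variables in $a$ for $s\le1/2$ and in $z$ for $s\ge1/2$; the Jacobians are at most $2^d$ and the images stay in $D_\rho$. This yields $\int_{D_\rho}|f-f_{A_\rho}|\le C_d\rho\int_{D_\rho}|\nabla f|$, after which your argument finishes (iii).

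This also disposes of your worry about nearly tangential intersections, because the rim of the cap is never examined. In that regime the cap is locally the region above the maximum of two nearly parallel graphs, so no thin wedge forms anyway.
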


\begin{proof}
For $d=1$, sufficiently small caps are intervals, and the assertions follow from
the one-dimensional Sobolev and variation inequalities.  Suppose $d\ge2$.
Fix a finite $C^2$ boundary atlas, a tubular radius, and a common bound $K$ for the
second derivatives of the boundary graphs.  Decrease $\rho_\Omega$ so that every cap meeting
the boundary lies in a graph chart.  At a nearest boundary point, use coordinates in which
\[
 \Omega=\{x_d>\psi(x')\},\quad \psi(0)=0,\quad D\psi(0)=0,
 \quad x_0=(0,t),\quad 0\le t\le\rho,
\]
and $\|D^2\psi\|\le K$.  Put
\[
 A_\rho:=B_{\rho/16}(x_0+(\rho/4)e_d).
\]
For $K\rho$ sufficiently small, $A_\rho\subset D_\rho$ and every $a\in A_\rho$
has height $a_d-\psi(a')\ge\rho/8$ above the graph.  Moreover, $D_\rho$ is star-shaped
with respect to every point of $A_\rho$.  To verify this, let $z\in D_\rho$ and $0\le s\le1$.
The Hessian bound gives
\begin{align*}
 &(1-s)a_d+sz_d-\psi((1-s)a'+sz')\\
 &\quad\ge (1-s)(a_d-\psi(a'))+s(z_d-\psi(z'))
       -\frac K2s(1-s)|a'-z'|^2>0
\end{align*}
for $s<1$, after decreasing $\rho_\Omega$ once, because $|a'-z'|\le2\rho$.
At $s=1$ the point is $z\in\Omega$.  The segment also stays in $B_\rho(x_0)$ by convexity
of that ball.  For interior balls the same conclusions hold with $A_\rho=B_{\rho/16}(x_0)$.
Thus every cap has diameter at most $2\rho$ and contains an averaging ball of radius
$\rho/16$ with this segment property, uniformly in the center.

This proves the lower measure bound in (i).  It also gives the required uniform $L^1$
Poincar\'e inequality directly.  For smooth $f$, integrate
\[
 |f(z)-f(a)|\le 2\rho\int_0^1
 |\nabla f((1-s)a+sz)|\,\dd s
\]
over $z\in D_\rho$ and $a\in A_\rho$.  For $s\le1/2$ change variables in $a$;
for $s\ge1/2$ change variables in $z$.  The respective Jacobian factors are at most
$2^d$, and all image points remain in $D_\rho$.  Division by $|A_\rho|$ yields
\[
 \int_{D_\rho}|f-f_{A_\rho}|\le C_d\rho\int_{D_\rho}|\nabla f|,
 \qquad f_{A_\rho}:=|A_\rho|^{-1}\int_{A_\rho}f.
\]
Sobolev density extends the estimate to $W^{1,1}$.  Apply it to
$f=\min\{(w-h)_+,\ell-h\}$.  The double integral of $|f(x)-f(y)|$ over
$D_\rho\times D_\rho$ is at least
$2(\ell-h)|\{w\le h\}||\{w\ge\ell\}|$ and at most
$2|D_\rho|\int_{D_\rho}|f-f_{A_\rho}|$.  This proves (iii).
It supplies the cap estimate directly, without requiring the curved caps to be convex;
compare the convex-domain uniform inequalities in~\cite{Thomas2015}.

For (ii), extend a function vanishing near the artificial spherical boundary by zero outside
the cap within $\Omega$.  In a boundary chart flatten the physical boundary and reflect this
function evenly across its flat face.  The function is supported in a ball of radius $C\rho$;
the chart and its inverse have uniform Jacobian and derivative bounds.  The Euclidean
Gagliardo--Nirenberg inequality for this extension gives, at each time,
\[
 \int_{D_\rho}|q|^{2(d+2)/d}
 \le C_\Omega\left(\int_{D_\rho}|q|^2\right)^{2/d}
                    \int_{D_\rho}|\nabla q|^2.
\]
Interior caps use zero extension only.  Integrating in time proves (ii).
Finally the elementary annulus bound
\[
 |D_\rho\setminus D_{\lambda\rho}|
 \le \omega_d(1-\lambda^d)\rho^d
 \le C_\Omega(1-\lambda^d)|D_\rho|
\]
proves the remaining assertion in (i).  Only test functions are extended or reflected;
the Maxwell--Stefan system itself is not reflected.
\end{proof}

\subsection{Quadratic and two-time stabilized estimates}

\begin{proposition}[Stabilized one-sided root Caccioppoli inequalities]
\label{prop:DG-Caccioppoli}
Assume the separated spectral rectangle \eqref{eq:sep-rectangle}.  There are constants
$c_{\mathcal R},C_{\mathcal R}>0$ such that the following holds uniformly on every interior or
boundary cap below $\rho_\Omega$.  For $k\in[u_-,u_+]$, either sign, and
$r=(u-k)_\pm$, every $C^1$ cutoff $0\le\eta\le1$ which vanishes near the lower time face and the
artificial lateral boundary, but may be nonzero on $\partial\Omega$, satisfies
\begin{align}
 &\esssup_t\int \eta^2r^2(t)
 +c_{\mathcal R}\iint\eta^2|\nabla r|^2
 +c_{\mathcal R}\varepsilon_{\mathcal R}\iint
   \eta^2r^2|\nabla v|^2 \notag\\
 &\qquad\le
 C_{\mathcal R}\iint r^2
 \bigl(|\nabla\eta|^2+\eta|\partial_t\eta|\bigr).
 \label{eq:DG-cutoff-with-opposite}
\end{align}
Consequently, for $0<\rho<R\le\rho_\Omega$,
\begin{align}
 &\esssup_{t\in(t_0-\rho^2,t_0)}
   \int_{D_\rho}r^2(t)
 +\iint_{Q_\rho^\Omega}|\nabla r|^2
 +\varepsilon_{\mathcal R}\iint_{Q_\rho^\Omega}r^2|\nabla v|^2
 \notag\\
 &\qquad\le
 \frac{C_{\mathcal R}}{(R-\rho)^2}
 \iint_{Q_R^\Omega}r^2.
 \label{eq:DG-nested-with-opposite}
\end{align}
The same statements hold for $k\in[v_-,v_+]$, with $r=(v-k)_\pm$ and $u$ in place of the
opposite root.
\end{proposition}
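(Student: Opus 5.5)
We test the localized entropy identity of Lemma~\ref{lem:C11-entropy-chain}(ii) for the stabilized one-sided truncation $\Psi=\Psi_{u,k}^{\pm}$ from \eqref{eq:stabilized-u-entropy} against $\eta^2$, and then combine the comparability \eqref{eq:stabilized-comparable}, the coercivity of Lemma~\ref{lem:stabilized-coercivity} and the flux bound \eqref{eq:stabilized-flux-size}.

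\textbf{Step 1: the localized identity.} Fix a time slice $s$ inside the cylinder. Lemma~\ref{lem:C11-entropy-chain} says that $t\mapsto\int_\Omega\eta^2\Psi$ is absolutely continuous and that the flux $J_\Psi$ has zero normal trace on $\partial\Omega$. The cutoff $\eta$ vanishes near the lower time face and near the artificial lateral boundary, so integrating by parts in $x$ and in $t$ from the lower face up to $s$ produces no boundary terms at all. The physical boundary drops out because $J_\Psi\cdot\nu=0$, which in turn comes from \eqref{eq:root-neumann}. This leaves
\[
\int\eta^2\Psi(s)+\iint\eta^2\mathscr Q[\Psi]
=\iint 2\eta\,\partial_t\eta\,\Psi-\iint 2\eta\nabla\eta\cdot J_\Psi .
\]

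\textbf{Step 2: bound the left side from below.} By \eqref{eq:stabilized-comparable}, $\int\eta^2\Psi(s)\ge c\int\eta^2r^2(s)$. By \eqref{eq:stabilized-u-coercivity}, $\iint\eta^2\mathscr Q[\Psi]\ge c_{\mathcal R}\iint\eta^2(|\nabla r|^2+\varepsilon_{\mathcal R}r^2|\nabla v|^2)$.

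\textbf{Step 3: bound the right side from above.} The time term is at most $C\iint r^2\eta|\partial_t\eta|$, again by comparability. For the flux term, \eqref{eq:stabilized-flux-size} gives
\[
2\eta|\nabla\eta||J_\Psi|\le C\eta|\nabla\eta|\bigl(r|\nabla r|+r^2|\nabla v|\bigr).
\]
Young's inequality splits this into
\[
\tfrac{c_{\mathcal R}}{4}\eta^2|\nabla r|^2+C r^2|\nabla\eta|^2
\qquad\text{and}\qquad
\tfrac{c_{\mathcal R}\varepsilon_{\mathcal R}}{4}\eta^2r^2|\nabla v|^2+C\varepsilon_{\mathcal R}^{-1}r^2|\nabla\eta|^2 .
\]
The two $\eta^2$ pieces are absorbed into the left side. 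The factor $\varepsilon_{\mathcal R}^{-1}$ is harmless: $\varepsilon_{\mathcal R}$ was fixed once on $\mathcal R$, so it can be folded into $C_{\mathcal R}$.

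\textbf{Step 4: conclude \eqref{eq:DG-cutoff-with-opposite}.} Take the essential supremum over $s$ in the bound for the time-slice term. Separately, use the identity at the top time $s=t_0$ to control the two space–time integrals. Adding the two estimates gives \eqref{eq:DG-cutoff-with-opposite}.

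\textbf{Step 5: the nested estimate \eqref{eq:DG-nested-with-opposite}.} Choose a standard cutoff with $\eta=1$ on $Q_\rho^\Omega$ and support in $(t_0-R^2,t_0]\times B_R(x_0)$. It satisfies $|\nabla\eta|\le C/(R-\rho)$ and $|\partial_t\eta|\le C/(R^2-\rho^2)\le C/(R-\rho)^2$. The cutoff is allowed to be nonzero on $\partial\Omega$, since the physical boundary contributed nothing in Step 1. The upper-root case is identical, using $\Psi_{v,k}^\pm$, \eqref{eq:stabilized-v-coercivity} and the symmetric flux bound with $u$ as the opposite root.

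\textbf{Main obstacle.} The delicate point is justifying the test in the strong class without any boundary contribution. Concretely, three things must hold: the $C^{1,1}$ truncation must create no interface measure, the flux normal trace must vanish on the curved physical boundary, and the time-slice integration must be legitimate. All three are supplied by Lemma~\ref{lem:C11-entropy-chain}, so I would invoke it directly rather than mollifying. The remaining algebra is the absorption of the opposite-gradient flux term, which works precisely because the coercivity of Lemma~\ref{lem:stabilized-coercivity} contains the matching weighted term $r^2|\nabla v|^2$.
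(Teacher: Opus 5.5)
Your proposal is correct and follows the paper's proof: you test the stabilized entropy identity from Lemma~\ref{lem:C11-entropy-chain} with $\eta^2$, drop the physical boundary term via $J_\Psi\cdot\nu=0$, absorb the flux using \eqref{eq:stabilized-flux-size}, Young's inequality and the coercivity of Lemma~\ref{lem:stabilized-coercivity}, and then pass to $r^2$ via $\Psi\asymp r^2$ and standard nested cutoffs. Your explicit Young splitting (with $\varepsilon_{\mathcal R}^{-1}$ folded into $C_{\mathcal R}$) and the bound $R^2-\rho^2\ge(R-\rho)^2$ only spell out steps the paper leaves implicit; the $W^{1,p}(I;L^p)$ regularity of the composed entropy, established in that lemma's proof, also justifies the time-dependent cutoff $\eta$.
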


\begin{proof}
By Lemma~\ref{lem:C11-entropy-chain}, the stabilized $C^{1,1}$ entropy
$\Psi=\Psi_{u,k}^{\pm}$ satisfies the distributional identity
\[
 \partial_t\Psi-\Div J_\Psi+\mathscr Q[\Psi]=0.
\]
Because $\partial_\nu u=\partial_\nu v=0$, one has $J_\Psi\cdot\nu=0$ on the physical boundary.
Multiplication by $\eta^2$, integration up to an arbitrary time, the flux estimate
\eqref{eq:stabilized-flux-size}, and the coercivity
\eqref{eq:stabilized-u-coercivity} give, after Young's inequality,
\[
 \esssup_t\int\eta^2\Psi(t)
 +c\iint\eta^2|\nabla r|^2
 +c\varepsilon_{\mathcal R}\iint\eta^2r^2|\nabla v|^2
 \le C\iint r^2(|\nabla\eta|^2+\eta|\partial_t\eta|).
\]
Now use $\Psi\asymp_{\mathcal R}r^2$.  Standard nested parabolic cutoffs give
\eqref{eq:DG-nested-with-opposite}.  The upper-root calculation is identical.
\end{proof}

\begin{lemma}[Two-time stabilized level energy]\label{lem:two-time-stabilized-energy}
Under the hypotheses of Proposition~\ref{prop:DG-Caccioppoli}, let $s<t$ and let
$\zeta=\zeta(x)$ be a time-independent spatial cutoff which vanishes near the artificial boundary
and may be nonzero on $\partial\Omega$.  For $r=(u-k)_\pm$ and
$\Psi=\Psi_{u,k}^{\pm}$,
\begin{align}
 &\int \zeta^2\Psi(t)
 +c_{\mathcal R}\int_s^t\!\!\int\zeta^2|\nabla r|^2
 +c_{\mathcal R}\varepsilon_{\mathcal R}\int_s^t\!\!\int
   \zeta^2r^2|\nabla v|^2 \notag\\
 &\qquad\le
 \int\zeta^2\Psi(s)
 +C_{\mathcal R}\int_s^t\!\!\int r^2|\nabla\zeta|^2.
 \label{eq:two-time-stabilized-energy}
\end{align}
The symmetric statement holds for upper-root truncations.
\end{lemma}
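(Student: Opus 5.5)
The estimate comes from testing the distributional entropy identity of Lemma~\ref{lem:C11-entropy-chain}(ii) for $\Psi=\Psi_{u,k}^{\pm}$ with the time-independent weight $\zeta^2$ and integrating between the slices $s$ and $t$. Lemma~\ref{lem:C11-entropy-chain} gives two facts we need. First, $\tau\mapsto\int\zeta^2\Psi(\tau)$ is absolutely continuous on $\overline I$. Second, the flux $J_\Psi$ lies in $L^p(I;W^{1,p})$ and has zero normal trace on $\partial\Omega$. Since $\zeta$ vanishes near the artificial boundary, integration by parts in space produces no boundary term on either part of $\partial D_\rho$. We therefore obtain the exact balance
\[
 \int\zeta^2\Psi(t)-\int\zeta^2\Psi(s)
 +\int_s^t\!\!\int\zeta^2\mathscr Q[\Psi]
 =-\int_s^t\!\!\int 2\zeta\nabla\zeta\cdot J_\Psi .
\]
Because $\zeta$ does not depend on time, no $\partial_t\zeta$ term appears. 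This is why the right-hand side of \eqref{eq:two-time-stabilized-energy} retains $\int\zeta^2\Psi(s)$ rather than a time-cutoff contribution.

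Next we bound the dissipation from below by the stabilized coercivity \eqref{eq:stabilized-u-coercivity} of Lemma~\ref{lem:stabilized-coercivity}. This gives
\[
 \int_s^t\!\!\int\zeta^2\mathscr Q[\Psi]
 \ge c_{\mathcal R}\int_s^t\!\!\int\zeta^2\bigl(|\nabla r|^2+\varepsilon_{\mathcal R}r^2|\nabla v|^2\bigr).
\]
We bound the flux term with \eqref{eq:stabilized-flux-size}:
\[
 2\zeta|\nabla\zeta|\,|J_\Psi|
 \le C\bigl(\zeta r|\nabla r|\,|\nabla\zeta|+\zeta r^2|\nabla v|\,|\nabla\zeta|\bigr).
\]
We apply Young's inequality to each product separately. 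The first is bounded by
\[
 \tfrac{c_{\mathcal R}}2\zeta^2|\nabla r|^2+C r^2|\nabla\zeta|^2 .
\]
For the second we use the splitting
\[
 \zeta r^2|\nabla v|\,|\nabla\zeta|
 \le \tfrac{c_{\mathcal R}\varepsilon_{\mathcal R}}{2C}\zeta^2r^2|\nabla v|^2
 +C'\varepsilon_{\mathcal R}^{-1}r^2|\nabla\zeta|^2 .
\]
Both gradient pieces are absorbed into the left-hand side at half strength. Since $\varepsilon_{\mathcal R}$ is a fixed parameter depending only on $\mathcal R$, the factor $\varepsilon_{\mathcal R}^{-1}$ is merged into $C_{\mathcal R}$. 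Renaming $c_{\mathcal R}$ as $c_{\mathcal R}/2$ then yields \eqref{eq:two-time-stabilized-energy}.

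The main point to check is that the opposite-gradient flux contribution $r^2|\nabla v|$ really is absorbed by the weighted opposite-gradient dissipation. Pure EPD truncations give no control of $|\nabla v|$ at all, so this step relies on the entropy stabilization. The weights match: both the flux term and the dissipation term carry exactly $r^2$, and this is what makes the absorption legitimate. The only other points to verify are the following.
\begin{itemize}
\item The test function $\zeta^2$ is admissible and the boundary term vanishes. This uses the zero normal trace of $J_\Psi$ from Lemma~\ref{lem:C11-entropy-chain} together with $\partial_\nu u=\partial_\nu v=0$, and it holds even where $\zeta\neq0$ on $\partial\Omega$.
\item All constants are uniform in the level $k$ and in the sign. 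This follows from the uniformity already built into Lemma~\ref{lem:stabilized-coercivity} and into \eqref{eq:stabilized-flux-size}.
\end{itemize}
The upper-root case is identical, with $u$ and $v$ interchanged and \eqref{eq:stabilized-v-coercivity} used in place of \eqref{eq:stabilized-u-coercivity}.
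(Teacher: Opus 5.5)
Your proposal is correct and follows the paper's proof. You integrate the stabilized entropy identity from Lemma~\ref{lem:C11-entropy-chain} against the time-independent weight $\zeta^2$ between the slices $s$ and $t$, so no temporal cutoff enters, and then absorb the flux term exactly as in Proposition~\ref{prop:DG-Caccioppoli} using \eqref{eq:stabilized-flux-size}, \eqref{eq:stabilized-u-coercivity} and Young's inequality; you have simply written out the absorption step, including the $r\cdot r$ splitting of the opposite-gradient term, which the paper leaves implicit.
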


\begin{proof}
Integrate the exact stabilized entropy identity between $s$ and $t$ and localize only in space.
There is therefore no temporal cutoff and the lower-time entropy appears with coefficient one.  The
same flux absorption as in Proposition~\ref{prop:DG-Caccioppoli} yields
\eqref{eq:two-time-stabilized-energy}.
\end{proof}

\subsection{A logarithmic estimate from the scalar root equations}

For the lower root, \eqref{eq:spec-div-u} is
\begin{equation}\label{eq:selected-u-scalar-drift}
 u_t-\Div\!\left(\frac1v\nabla u\right)-h(u,v)\nabla u\cdot\nabla v=0,
 \qquad \partial_\nu u=0,
\end{equation}
and for the upper root
\begin{equation}\label{eq:selected-v-scalar-drift}
 v_t-\Div\!\left(\frac1u\nabla v\right)+h(u,v)\nabla v\cdot\nabla u=0,
 \qquad \partial_\nu v=0.
\end{equation}
On the fixed separated rectangle, $u^{-1}$, $v^{-1}$, and $h(u,v)$ are uniformly bounded, while
$u^{-1}$ and $v^{-1}$ are uniformly positive.  We shall use the following identity under a change of orientation.  If $w$ is one of the roots, $z$ is the opposite root, and its scalar equation is written
\[
 w_t-\Div(a(w,z)\nabla w)+c(w,z)\nabla w\cdot\nabla z=0,
 \qquad \partial_\nu w=0,
\]
then, for $\varepsilon\in\{-1,1\}$ and a constant $c_0$,
$\widetilde w=\varepsilon w+c_0$ satisfies
\begin{equation}\label{eq:oriented-ternary-root-equation}
 \widetilde w_t-\Div(a(w,z)\nabla\widetilde w)
 +c(w,z)\nabla\widetilde w\cdot\nabla z=0,
 \qquad \partial_\nu\widetilde w=0.
\end{equation}
The coefficient field is still evaluated along the original state.  Thus an upper-phase logarithmic
test for $\widetilde w$ is the lower-phase test for $w$ when $\varepsilon=-1$; no
symmetry of the coefficients is being assumed.

Fix $\sigma_0\in(0,1/2)$, say $\sigma_0=1/4$.  For $H>0$,
$0<\delta<\sigma_0/2$, and $0\le r\le H$, define
\begin{align}
 \ell_{H,\delta}(r)
 &:={\left[\log\frac{(1-\sigma_0+\delta)H}
 {H-r+\delta H}\right]}_+,
 \label{eq:scaled-log-ell}\\
 F_{H,\delta}(r)&:=H^2\ell_{H,\delta}(r)^2.
 \label{eq:scaled-log-F}
\end{align}
Then $F_{H,\delta}=0$ on $[0,\sigma_0H]$.  On its active interval, with
$D(r)=H-r+\delta H$,
\[
 F'_{H,\delta}(r)=\frac{2H^2\ell_{H,\delta}(r)}{D(r)},
 \qquad
 F''_{H,\delta}(r)=\frac{2H^2(1+\ell_{H,\delta}(r))}{D(r)^2}.
\]
Thus, with
\[
 L_\delta:=\log\frac{1-\sigma_0+\delta}{\delta},
\]
\begin{equation}\label{eq:scaled-log-ratio}
 \frac{(F'_{H,\delta}(r))^2}{F''_{H,\delta}(r)}
 \le 2\sigma_0^{-2}L_\delta r^2,
\end{equation}
where the quotient is defined as zero on the inactive interval.  Estimate
\eqref{eq:scaled-log-ratio} shows that the drift-generated weighted square is controlled by the
stabilized entropy.

\begin{lemma}[Scaled logarithmic-square root estimate]\label{lem:root-logarithmic-estimate}
Let $w$ denote either spectral root, let $z$ denote the opposite root, and let
$r=(w-k)_+$ or $r=(k-w)_+$ satisfy $0\le r\le H$ on the cylinder under consideration.  The level
$k$ is assumed to lie in the corresponding trapped root interval.  Let $\eta,\zeta$ be
 time-independent spatial cutoffs with $0\le\eta,\zeta\le1$,
$\zeta\equiv1$ on a neighborhood of $\operatorname{supp}\eta$, both vanishing near the artificial
spatial boundary but unrestricted on $\partial\Omega$.  Then
\begin{align}
 &\int\eta^2F_{H,\delta}(r(t))
 +c_{\mathcal R}\int_s^t\!\!\int
   \eta^2F''_{H,\delta}(r)|\nabla r|^2 \notag\\
 &\quad\le
 \int\eta^2F_{H,\delta}(r(s))
 +C_{\mathcal R}L_\delta\int\zeta^2r^2(s) \notag\\
 &\qquad
 +C_{\mathcal R}L_\delta\int_s^t\!\!\int
 r^2\bigl(|\nabla\eta|^2+|\nabla\zeta|^2\bigr).
 \label{eq:root-logarithmic-estimate}
\end{align}
The constants are independent of $H,\delta,s,t$ and are uniform for interior and boundary caps.
\end{lemma}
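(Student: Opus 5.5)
The plan is to test the oriented scalar root equation \eqref{eq:oriented-ternary-root-equation} with $\eta^2F'_{H,\delta}(r)$. The drift term this produces is then controlled by the weighted opposite-gradient bound of the two-time stabilized energy, Lemma~\ref{lem:two-time-stabilized-energy}.

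\emph{Reduction and test.} Choosing $\varepsilon=\pm1$ and $c_0=\mp k$ in \eqref{eq:oriented-ternary-root-equation}, we may assume $r=(\widetilde w)_+$ with $\widetilde w=\pm(w-k)$. Then $\widetilde w$ solves
\[
\widetilde w_t-\Div(a\nabla\widetilde w)+c\,\nabla\widetilde w\cdot\nabla z=0,
\qquad \partial_\nu\widetilde w=0,
\]
where $a\in\{1/v,1/u\}$ is bounded above and below on $\mathcal R$ and $c=\pm h$ is bounded. On $[0,H]$ the function $F_{H,\delta}$ is $C^{1}$ with Lipschitz derivative, since $D(r)\ge\delta H$. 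Moreover $F'_{H,\delta}$ vanishes on $[0,\sigma_0H]$, so $r\mapsto F'_{H,\delta}(r)$ composed with $\widetilde w$ is Lipschitz and vanishes on $\{\widetilde w\le0\}$. Since $(u,v)$ lies in the strong class \eqref{eq:strong-root-class}, the Bochner/Sobolev chain rules apply exactly as in Lemma~\ref{lem:C11-entropy-chain}. Stampacchia's identity removes any interface contribution, so $\nabla F'(r)=F''(r)\nabla r$ a.e. and $\nabla r=\nabla\widetilde w$ on the active set. Because $\eta$ is time-independent and the Neumann trace kills the boundary flux on $\partial\Omega$ (the cutoff vanishes only near the artificial boundary), integrating between $s$ and $t$ gives
\[
\int\eta^2F(r(t))+\int_s^t\!\!\int a\eta^2F''|\nabla r|^2
=\int\eta^2F(r(s))-2\int_s^t\!\!\int a\eta F'\nabla\eta\cdot\nabla r-\int_s^t\!\!\int c\,\eta^2F'\nabla r\cdot\nabla z .
\]

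\emph{Absorption.} Both right-hand integrals are bounded by Young's inequality with weight $F''$. Each contributes at most $\tfrac a4\eta^2F''|\nabla r|^2$, plus the respective remainders
\[
C\,\frac{(F')^2}{F''}\,|\nabla\eta|^2
\qquad\text{and}\qquad
C\,\eta^2\frac{(F')^2}{F''}\,|\nabla z|^2 .
\]
The ratio bound \eqref{eq:scaled-log-ratio} turns the first remainder into $CL_\delta r^2|\nabla\eta|^2$, which is one of the allowed terms. It turns the second into $CL_\delta\,\eta^2r^2|\nabla z|^2$.

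\emph{Main obstacle: the drift remainder.} This remainder $CL_\delta\,\eta^2r^2|\nabla z|^2$ is the only genuinely nonscalar contribution, and it is exactly what the entropy stabilization was built to control. Since $\eta\le1$ and $\zeta\equiv1$ on $\operatorname{supp}\eta$, we have $\eta^2\le\zeta^2$. Apply Lemma~\ref{lem:two-time-stabilized-energy} with the cutoff $\zeta$ and the stabilized truncation $\Psi$ of the same root, level and sign, so that its $r$ is our $r$ and its opposite root is $z$. Together with $\Psi\asymp_{\mathcal R}r^2$ from \eqref{eq:stabilized-comparable}, this yields
\[
\int_s^t\!\!\int\eta^2r^2|\nabla z|^2
\le C_{\mathcal R}\varepsilon_{\mathcal R}^{-1}\Bigl(\int\zeta^2r^2(s)+\int_s^t\!\!\int r^2|\nabla\zeta|^2\Bigr),
\]
where $\varepsilon_{\mathcal R}$ is the fixed parameter from Lemma~\ref{lem:stabilized-coercivity}. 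Multiplying by $L_\delta$ produces the terms $C_{\mathcal R}L_\delta\int\zeta^2r^2(s)$ and $C_{\mathcal R}L_\delta\int\!\!\int r^2|\nabla\zeta|^2$ in \eqref{eq:root-logarithmic-estimate}. After absorbing the $\tfrac a2\eta^2F''|\nabla r|^2$ terms into the left side, the remaining coefficient is $\tfrac a2\ge c_{\mathcal R}$, which gives \eqref{eq:root-logarithmic-estimate}.

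All constants come from bounds on $\mathcal R$, from \eqref{eq:scaled-log-ratio} (which depends only on $\sigma_0$ and carries the explicit factor $L_\delta$), and from Lemma~\ref{lem:two-time-stabilized-energy}, which is uniform for interior and boundary caps. They are therefore independent of $H,\delta,s,t$, as claimed.
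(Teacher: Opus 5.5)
Your proposal is correct and follows essentially the same route as the paper. You test the scalar root equation with $\eta^2F'_{H,\delta}(r)$, absorb the localization and drift errors by Young's inequality weighted with $F''_{H,\delta}$ together with the ratio bound \eqref{eq:scaled-log-ratio}, and then control the remaining term $L_\delta\iint\eta^2r^2|\nabla z|^2$ through Lemma~\ref{lem:two-time-stabilized-energy} with the cutoff $\zeta$. The only differences are cosmetic: you orient the equation at the outset, and because $0\le r\le H$ you do not need the paper's explicit $C^{1,1}$ extension of $F_{H,\delta}$ beyond $H$.
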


\begin{proof}
We write the proof for $w=u$ and $r=(u-k)_+$; the other cases differ only by orientation and by
interchanging $u,v$.  On the active set, test \eqref{eq:selected-u-scalar-drift} with
$\eta^2F'_{H,\delta}(r)$ and integrate between $s$ and $t$.  The principal part gives
\[
 \int_s^t\!\!\int \frac1v\eta^2F''_{H,\delta}(r)|\nabla r|^2.
\]
The localization and drift errors are bounded by
\[
 C_{\mathcal R}\eta|F'_{H,\delta}(r)|\,|\nabla r|\,|\nabla\eta|,
 \qquad
 C_{\mathcal R}\eta^2|F'_{H,\delta}(r)|\,|\nabla r|\,|\nabla v|.
\]
Young's inequality, the positive lower bound for $1/v$, and
\eqref{eq:scaled-log-ratio} give
\begin{align*}
 &\int\eta^2F_{H,\delta}(r(t))
 +c\int_s^t\!\!\int\eta^2F''_{H,\delta}(r)|\nabla r|^2\\
 &\qquad\le
 \int\eta^2F_{H,\delta}(r(s))
 +C_{\mathcal R}L_\delta\int_s^t\!\!\int r^2|\nabla\eta|^2
 +C_{\mathcal R}L_\delta\int_s^t\!\!\int\eta^2r^2|\nabla v|^2.
\end{align*}
Apply Lemma~\ref{lem:two-time-stabilized-energy} at the same level and sign with cutoff $\zeta$.
Because $\zeta=1$ on $\operatorname{supp}\eta$ and $\Psi\asymp_{\mathcal R}r^2$,
\[
 \int_s^t\!\!\int\eta^2r^2|\nabla v|^2
 \le C_{\mathcal R}
 \left(
  \int\zeta^2r^2(s)
  +\int_s^t\!\!\int r^2|\nabla\zeta|^2
 \right).
\]
This proves \eqref{eq:root-logarithmic-estimate}.  The scalar test does not require a formal
nonsmooth chain rule.  In the plus case set
\[
 \widehat F_{H,\delta}(r):=
 \begin{cases}
 0,&r<0,\\
 F_{H,\delta}(r),&0\le r\le H,\\
 F_{H,\delta}(H)+F'_{H,\delta}(H)(r-H),&r>H,
 \end{cases}
 \qquad G(s):=\widehat F_{H,\delta}((s-k)_+).
\]
In the minus case set $G(s):=\widehat F_{H,\delta}((k-s)_+)$.
This extension is needed because the original logarithmic formula is defined here only on
$[0,H]$ and would become singular at $r=(1+\delta)H$ if continued unchanged.
The first derivatives match at $r=H$, while $F_{H,\delta}$ vanishes on
$[0,\sigma_0H]$.  Hence $G\in C^{1,1}(\mathbb R)$ and $G'$ is bounded and Lipschitz
for each fixed $H,\delta$; it vanishes near the original truncation level $s=k$.
There can be jumps in the a.e. second derivative at the logarithmic activation level and at
$r=H$, but $G'$ remains continuous.  Along the solution, $0\le r\le H$,
so all estimates above are unchanged; on $\{r=H\}$ its spatial gradient vanishes a.e.  Thus $\eta^2G'(w)$ is an admissible $W^{1,2}$ spatial test on every compact
preterminal slab (or is obtained by smooth density), and
\[
 \nabla G'(w)=G''(w)\nabla w\qquad\text{a.e.}
\]
with no singular measure; on a level set where $G''$ is undefined, Stampacchia's identity
$\nabla w=0$ a.e. makes its assigned value irrelevant.  The ordinary $C^1$ time chain rule gives
$\partial_tG(w)=G'(w)w_t$.  This justifies the displayed calculation in the weak formulation.
At the physical boundary $\partial_\nu w=0$, so no boundary term is produced.
\end{proof}

\begin{lemma}[Short-time propagation of level-set measure]\label{lem:root-measure-propagation}
There exist structural numbers
$\delta_0,\lambda_0,\epsilon_0,\gamma_0\in(0,1)$ with the following property.  Let
$0<\rho\le\rho_\Omega$, let $w$ be either root with invariant interval $[a,b]$, and suppose
$[k,k+H]\subset[a,b]$ and
\[
 w\le k+H
 \quad\text{on }[s,s+\delta_0\rho^2]\times D_\rho.
\]
If
\[
 |\{w(s)\le k\}\cap D_\rho|\ge\frac12|D_\rho|,
\]
then, for every $t\in[s,s+\delta_0\rho^2]$,
\begin{equation}\label{eq:propagated-low-fraction}
 |\{w(t)\le k+(1-\epsilon_0)H\}\cap D_{\lambda_0\rho}|
 \ge\gamma_0|D_{\lambda_0\rho}|.
\end{equation}
The symmetric upper-phase statement also holds.
\end{lemma}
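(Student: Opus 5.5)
The plan is the standard De Giorgi ``expansion of positivity in time'' argument, with Lemma~\ref{lem:root-logarithmic-estimate} as its engine. Only the upward truncation matters here. Set $r=(w-k)_+$; the hypothesis $w\le k+H$ gives $0\le r\le H$ on the slab $[s,s+\delta_0\rho^2]\times D_\rho$. The level $k$ lies in the invariant interval, as Lemma~\ref{lem:root-logarithmic-estimate} requires. Fix a spatial cutoff $\zeta$ that equals $1$ on $D_{\rho/2}$, vanishes near $\partial B_\rho\cap\Omega$ and has $|\nabla\zeta|\le C/\rho$, and fix $\eta\le\zeta$ equal to $1$ on $D_{\lambda_0\rho}$ with $\operatorname{supp}\eta\subset\{\zeta=1\}$. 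Neither cutoff is restricted on $\partial\Omega$. We then apply \eqref{eq:root-logarithmic-estimate} on $[s,t]$, discard the nonnegative gradient term, and bound each right-hand term.

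The three terms are handled as follows.
\begin{itemize}
\item The initial logarithmic term: on $\{w(s)\le k\}$ we have $r(s)=0$, and $F_{H,\delta}\le H^2L_\delta^2$. Hence $\int\eta^2F_{H,\delta}(r(s))\le H^2L_\delta^2\,|\{w(s)>k\}\cap D_\rho|\le\frac12H^2L_\delta^2|D_\rho|$.
\item The initial energy term: $C_{\mathcal R}L_\delta\int\zeta^2r^2(s)\le C_{\mathcal R}L_\delta H^2\cdot\frac12|D_\rho|$. This term carries only one factor $L_\delta$.
\item The cutoff error: $C_{\mathcal R}L_\delta\int_s^t\!\int r^2(|\nabla\eta|^2+|\nabla\zeta|^2)\le C_{\mathcal R}L_\delta H^2\,\delta_0\rho^2\cdot C\rho^{-2}|D_\rho|$. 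The constant depends on $\lambda_0$ through $|\nabla\eta|$; since $\lambda_0$ will be fixed before $\delta_0$, this is harmless. This term is made small by choosing $\delta_0$ small.
\end{itemize}

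For the lower bound, on the set $\{w(t)>k+(1-\epsilon_0)H\}$ we have $r>(1-\epsilon_0)H$. With $\epsilon_0=\delta$ this gives $F_{H,\delta}(r)\ge H^2\log^2\frac{1-\sigma_0+\delta}{2\delta}=H^2(L_\delta-\log 2)^2$. Dividing by $H^2$ and using $\eta=1$ on $D_{\lambda_0\rho}$, we obtain
\[
|\{w(t)>k+(1-\delta)H\}\cap D_{\lambda_0\rho}|\le
\frac{\tfrac12L_\delta^2+C L_\delta+C_{\lambda_0}L_\delta\delta_0}{(L_\delta-\log2)^2}\,|D_\rho|.
\]
The choices are made in order:
\begin{enumerate}
\item Fix $\lambda_0$ from Lemma~\ref{lem:uniform-local-geometry-DG}(i) so that $|D_{\lambda_0\rho}|\ge c_\Omega|D_\rho|$, with $c_\Omega$ as large as (i) allows, in particular near $1$. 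This choice is independent of $\delta$.
\item Choose $\delta$ small, so that $L_\delta$ is large and the prefactor is close to $\frac12$.
\item Choose $\delta_0\le\delta/L_\delta$ or similar.
\end{enumerate}
The upper bound then becomes $\le(\frac12+\theta)|D_\rho|\le(1-\gamma_0)|D_{\lambda_0\rho}|$, which gives \eqref{eq:propagated-low-fraction}.

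The delicate point, and the expected main obstacle, is that the leading contribution $\frac12L_\delta^2/(L_\delta-\log2)^2\to\frac12$ only approaches one half. The argument therefore needs $|D_\rho\setminus D_{\lambda_0\rho}|$ to be much smaller than $|D_\rho|/2$, so that the complement of the bad set inside $D_{\lambda_0\rho}$ keeps a positive fraction. This is exactly what the $\varepsilon$-version of Lemma~\ref{lem:uniform-local-geometry-DG}(i) provides: take $\varepsilon=1/8$, so $|D_{\lambda_0\rho}|\ge\frac78|D_\rho|$. The price is that $\nabla\eta$ lives on the thin annulus of width $(1-\lambda_0)\rho$. Its cost $C(1-\lambda_0)^{-2}\rho^{-2}$ enters only the cutoff term, which is killed by $\delta_0$, so the ordering $\lambda_0$, then $\delta$, then $\delta_0$ is consistent. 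Uniformity up to the boundary needs nothing extra: the logarithmic lemma is already stated for boundary caps with cutoffs free on $\partial\Omega$, where the Neumann condition removes boundary terms.

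The upper-phase statement follows by applying the same argument to $\widetilde w=-w+c_0$ via \eqref{eq:oriented-ternary-root-equation}, that is, by using the minus truncation $r=(k-w)_+$, which Lemma~\ref{lem:root-logarithmic-estimate} also covers.
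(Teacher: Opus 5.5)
Your proposal is correct and follows the paper's proof essentially line by line. Both arguments use the same thin-annulus choice of $\lambda_0$ from Lemma~\ref{lem:uniform-local-geometry-DG}(i), the same half-measure bounds on the three right-hand terms of \eqref{eq:root-logarithmic-estimate}, and the same lower bound $F_{H,\delta}\ge H^2(L_\delta-\log 2)^2$ on $\{r>(1-\delta)H\}$ with $\epsilon_0=\delta$ small; both also rest on the observation that the leading ratio tends only to $\tfrac12$ and must be beaten by $|D_{\lambda_0\rho}|$ being close to $|D_\rho|$.

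The only differences are bookkeeping. The paper does not shrink $\delta_0$: it fixes $\delta_0=\tfrac14\min\{1,\lambda_0^2\}$ and absorbs the cutoff error into $A_0L_\delta$, using $L_\delta/(L_\delta-\log 2)^2\to0$. Also, your $\zeta$ must equal $1$ on $\operatorname{supp}\eta\supset D_{\lambda_0\rho}$, so its gradient is of size $((1-\lambda_0)\rho)^{-1}$ rather than $\rho^{-1}$; your $\lambda_0$-dependent constant already accommodates this.
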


\begin{proof}
The case $H=0$ is immediate, so assume $H>0$.
Fix $\sigma_0=1/4$.  By Lemma~\ref{lem:uniform-local-geometry-DG}, choose
$\lambda_0\in(0,1)$ so close to one that
\begin{equation}\label{eq:thin-annulus-choice-scaled-log}
 |D_\rho\setminus D_{\lambda_0\rho}|\le\frac1{16}|D_\rho|.
\end{equation}
Choose radii $\lambda_0<\lambda_1<\lambda_2<1$ and cutoffs $\eta,\zeta$ with
$\eta=1$ on $D_{\lambda_0\rho}$, $\operatorname{supp}\eta\subset D_{\lambda_1\rho}$,
$\zeta=1$ on $D_{\lambda_1\rho}$, $\operatorname{supp}\zeta\subset D_{\lambda_2\rho}$, and
\[
 |\nabla\eta|+|\nabla\zeta|\le \frac{C}{(1-\lambda_0)\rho}.
\]
Set $r=(w-k)_+$.  Then $0\le r\le H$ and the initial half-measure assumption gives
\begin{equation}\label{eq:scaled-log-initial-bound}
 \int\eta^2F_{H,\delta}(r(s))\le\frac12H^2L_\delta^2|D_\rho|,
 \qquad
 \int\zeta^2r^2(s)\le\frac12H^2|D_\rho|.
\end{equation}
Put
\[
 L_\delta^-:=\log\frac{1-\sigma_0+\delta}{2\delta}=L_\delta-\log2.
\]
If $r>(1-\delta)H$, then
$F_{H,\delta}(r)\ge H^2(L_\delta^-)^2$.  Lemma~\ref{lem:root-logarithmic-estimate} therefore gives,
for $t\in[s,s+\delta_0\rho^2]$,
\begin{align}
 &H^2(L_\delta^-)^2
 |\{r(t)>(1-\delta)H\}\cap D_{\lambda_0\rho}| \notag\\
 &\quad\le
 H^2\left[
 \frac12L_\delta^2
 +C_{\mathcal R}L_\delta
 \left(1+\frac{\delta_0}{(1-\lambda_0)^2}\right)
 \right]|D_\rho|.
 \label{eq:scaled-log-propagation-ratio}
\end{align}
Set
\[
 A_0:=C_{\mathcal R}\left(1+\frac{1}{(1-\lambda_0)^2}\right).
\]
Choose $\epsilon_0\in(0,\sigma_0/2)$ so small that
\begin{equation}
 \frac{\frac12L_{\epsilon_0}^2+A_0L_{\epsilon_0}}
 {(L_{\epsilon_0}^-)^2}
 \le \frac{9}{16}.
 \label{eq:choose-epsilon-log-propagation}
\end{equation}
This is possible because $L_\delta/L_\delta^-\to1$ and
$L_\delta/(L_\delta^-)^2\to0$ as $\delta\downarrow0$.
Then fix
\[
 \delta_0:=\frac14\min\{1,\lambda_0^2\}.
\]
Since $\delta_0\le1$, the bracket in \eqref{eq:scaled-log-propagation-ratio} is bounded by
$\frac12L_{\epsilon_0}^2+A_0L_{\epsilon_0}$.
Using \eqref{eq:thin-annulus-choice-scaled-log},
$|D_{\lambda_0\rho}|\ge15|D_\rho|/16$, hence the relative measure of
$\{r>(1-\epsilon_0)H\}$ in $D_{\lambda_0\rho}$ is at most $3/5$.  Thus
\eqref{eq:propagated-low-fraction} holds, for example, with $\gamma_0=2/5$.  The opposite phase is
obtained from the lower truncation and the oppositely oriented scalar test.
\end{proof}

\subsection{Critical mass, shrinking, and oscillation decay}

\begin{lemma}[Critical mass from quadratic Caccioppoli]\label{lem:root-critical-mass}
There is a structural $\nu_*>0$ such that the following holds.  Let
$0<\rho\le\rho_\Omega/2$ and let $Q_{2\rho}^\Omega(t_0,x_0)$ lie in a positive-time strip.
Let $w$ be either root with trapped interval $[a,b]$, and suppose $[k,k+H]\subset[a,b]$,
$0\le(w-k)_+\le H$, and
\[
 \frac{|\{w>k\}\cap Q_{2\rho}^\Omega|}{|Q_{2\rho}^\Omega|}\le\nu_*.
\]
Then
\begin{equation}\label{eq:critical-mass-conclusion}
 \esssup_{Q_\rho^\Omega}(w-k)_+\le\frac H2.
\end{equation}
The analogous lower statement holds.
\end{lemma}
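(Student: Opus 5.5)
The plan is the standard De Giorgi iteration on shrinking boundary caps, driven by the nested Caccioppoli inequality \eqref{eq:DG-nested-with-opposite} of Proposition~\ref{prop:DG-Caccioppoli} and closed with the boundary parabolic Sobolev estimate \eqref{eq:boundary-parabolic-sobolev-DG}. The opposite-gradient term in \eqref{eq:DG-nested-with-opposite} is simply discarded here, since only the quadratic part is needed. We may assume $H>0$. For $n\ge0$ set
\[
\rho_n=\rho+2^{-n}\rho,\qquad k_n=k+\frac H2\bigl(1-2^{-n}\bigr),\qquad r_n=(w-k_n)_+,
\]
\[
Q_n=Q^\Omega_{\rho_n}(t_0,x_0),\qquad Y_n=\frac{|\{w>k_n\}\cap Q_n|}{|Q_n|}.
\]
All levels satisfy $k_n\in[k,k+H/2]\subset[a,b]$, so Proposition~\ref{prop:DG-Caccioppoli} applies at every level, with uniform constants, to both the lower and the upper root and to both signs.

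For each $n$ we choose intermediate radii $\tilde\rho_n=(\rho_n+\rho_{n+1})/2$ and a cutoff $\eta_n$ that equals $1$ on $Q_{n+1}$, is supported in $Q^\Omega_{\tilde\rho_n}$, vanishes near the artificial lateral boundary and the lower time face, and satisfies $|\nabla\eta_n|^2+|\partial_t\eta_n|\le C4^n\rho^{-2}$. The cutoff is left unrestricted on $\partial\Omega$. Combining \eqref{eq:DG-cutoff-with-opposite} for $r_{n+1}$ with \eqref{eq:boundary-parabolic-sobolev-DG}, applied to $q=\eta_n r_{n+1}$ on $D_{\rho_n}$, gives
\[
\iint_{Q_{n+1}}r_{n+1}^2
\le\Bigl(\iint(\eta_nr_{n+1})^{2(d+2)/d}\Bigr)^{d/(d+2)}|\{r_{n+1}>0\}\cap Q_n|^{2/(d+2)}
\le C\frac{4^n}{\rho^2}\Bigl(\iint_{Q_n}r_{n+1}^2\Bigr)^{1+\frac{2}{d+2}\cdot\frac{d}{d}\cdots}.
\]
Tracking the exponents gives the usual form
\[
\iint_{Q_{n+1}}r_{n+1}^2\le C\,\frac{4^n}{\rho^2}\Bigl(\iint_{Q_n}r_{n}^2\Bigr)|\{w>k_{n+1}\}\cap Q_n|^{2/(d+2)}.
\]
We use $r_{n+1}\le r_n$ and the Chebyshev bound
\[
|\{w>k_{n+1}\}\cap Q_n|\le\frac{4^{n+2}}{H^2}\iint_{Q_n}r_n^2,
\]
together with $r_n\le H$, which gives $\iint_{Q_n}r_n^2\le H^2Y_n|Q_n|$. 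Writing $|Q_n|\asymp\rho^{d+2}$, uniformly by Lemma~\ref{lem:uniform-local-geometry-DG}(i), the factors of $\rho$ and $H$ cancel by scaling. Since $|\{w>k_{n+1}\}\cap Q_{n+1}|\le 4^{n+2}H^{-2}\iint_{Q_{n+1}}r_{n+1}^2$, we arrive at the recursion
\[
Y_{n+1}\le C_*b^nY_n^{1+2/(d+2)}
\]
with structural constants $C_*$ and $b>1$.

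The standard fast-geometric-convergence lemma then gives $Y_n\to0$ provided $Y_0\le C_*^{-(d+2)/2}b^{-(d+2)^2/4}=:\nu_*$. This is exactly the hypothesis, because $Q_0=Q^\Omega_{2\rho}$ and $k_0=k$. Hence $|\{w>k+H/2\}\cap Q^\Omega_\rho|=0$, which is \eqref{eq:critical-mass-conclusion}. The lower statement follows by running the same argument for $r=(w-k)_-$ on the level set $\{w<k\}$, which is covered by the minus-sign case of Proposition~\ref{prop:DG-Caccioppoli}.

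The main point requiring care is the bookkeeping at the physical boundary and in the time direction. The cutoffs must vanish on the artificial boundary but may be nonzero on $\partial\Omega$, and this is precisely the setting of both \eqref{eq:DG-cutoff-with-opposite} and \eqref{eq:boundary-parabolic-sobolev-DG}, so no boundary terms arise. The cap measures must also be uniformly comparable to $\rho^d$ so that the constants are independent of the center, which Lemma~\ref{lem:uniform-local-geometry-DG}(i) provides. Finally, the condition $\rho\le\rho_\Omega/2$ guarantees that $2\rho\le\rho_\Omega$, so that all caps used in the iteration fall under that lemma.
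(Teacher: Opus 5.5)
Your argument is the same De Giorgi iteration as the paper's: Caccioppoli at the levels $k_n$ on shrinking caps, the boundary parabolic Sobolev inequality, and fast geometric convergence; it differs only in that you return to $L^2$ by H\"older, which gives the exponent $1+2/(d+2)$ instead of the paper's $1+2/d$ obtained from Chebyshev at the Sobolev exponent $2(d+2)/d$. One step is misstated and needs fixing: $|\{w>k_{n+1}\}\cap Q_{n+1}|\le 4^{n+2}H^{-2}\iint_{Q_{n+1}}r_{n+1}^2$ is false because $r_{n+1}$ can be arbitrarily small on $\{w>k_{n+1}\}$, but the inequalities you wrote just before it already yield the recursion for $Z_n:=\iint_{Q_n}r_n^2/(H^2|Q_n|)$, and $Z_0\le Y_0$ together with $Z_n\to0$ gives the conclusion (alternatively, as the paper does, test with $r_n$ and use $r_n\ge H2^{-n-2}$ on $\{w>k_{n+1}\}$).
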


\begin{proof}
The case $H=0$ is immediate, so assume $H>0$.
Put
\[
 \rho_n=\rho(1+2^{-n}),
 \qquad
 k_n=k+\frac H2(1-2^{-n}),
 \qquad
 A_n=\{w>k_n\}\cap Q_{\rho_n}^\Omega.
\]
Choose parabolic cutoffs $\eta_n$ supported in $Q_{\rho_n}^\Omega$, equal to one on
$Q_{\rho_{n+1}}^\Omega$, vanishing at the lower and artificial lateral boundaries, with
$|\nabla\eta_n|\le C2^n/\rho$ and
$|\partial_t\eta_n|\le C4^n/\rho^2$.  Proposition~\ref{prop:DG-Caccioppoli}, applied to
$w_n=(w-k_n)_+$, gives
\[
 E_n:=\esssup_t\int(w_n\eta_n)^2
      +\iint|\nabla(w_n\eta_n)|^2
 \le C_{\mathcal R}\frac{4^n}{\rho^2}H^2|A_n|.
\]
With $q=2(d+2)/d$, \eqref{eq:boundary-parabolic-sobolev-DG} gives
$\iint|w_n\eta_n|^q\le C E_n^{1+2/d}$.  On $A_{n+1}$,
$w_n\ge H2^{-n-2}$, hence
\[
 |A_{n+1}|
 \le C b^n\rho^{-2(d+2)/d}|A_n|^{1+2/d}
\]
for one structural $b>1$.  Since $|Q_{2\rho}^\Omega|\asymp_\Omega\rho^{d+2}$, the normalized
measures $Y_n=|A_n|/|Q_{2\rho}^\Omega|$ satisfy
\[
 Y_{n+1}\le Cb^nY_n^{1+2/d}.
\]
Set $\delta_D=2/d$.  The elementary induction
\[
 Y_n\le C^{-1/\delta_D}b^{-1/\delta_D^2-n/\delta_D}
\]
is valid whenever $Y_0\le C^{-1/\delta_D}b^{-1/\delta_D^2}$.  Choose $\nu_*>0$ no larger
than this structural threshold.  Then $Y_n\to0$, and the limiting level is $k+H/2$, proving
\eqref{eq:critical-mass-conclusion}.
\end{proof}

\begin{lemma}[Shrinking under a persistent opposite phase]\label{lem:root-shrinking}
Fix $\alpha,\delta\in(0,1)$ and $\Lambda>1$.  Let
$I=(s,s+\delta\rho^2)$ and suppose
\begin{equation}\label{eq:persistent-low-phase}
 |\{w(t)\le k_0\}\cap D_\rho|\ge\alpha|D_\rho|
 \qquad(t\in I).
\end{equation}
Assume $\Lambda\rho\le\rho_\Omega$ and that $D_{\Lambda\rho}$ lies in the same local chart,
$[k_0,k_0+M]\subset[a,b]$, and
$w\le k_0+M$ on $I\times D_{\Lambda\rho}$.  Define
$k_j=k_0+M(1-2^{-j})$.  Then
\begin{equation}\label{eq:shrinking-measure-estimate}
 \frac{|\{w>k_J\}\cap(I\times D_\rho)|}{|I\times D_\rho|}
 \le\frac{C_{\mathcal R}(\Lambda)}{\alpha\sqrt{\delta J}},
 \qquad J\ge1.
\end{equation}
The symmetric statement holds with signs reversed.
\end{lemma}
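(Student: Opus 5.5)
This is the classical De Giorgi shrinking lemma. I would derive it from the quadratic Caccioppoli inequality of Proposition~\ref{prop:DG-Caccioppoli} together with the relative isoperimetric inequality \eqref{eq:relative-DeGiorgi-isoperimetric}, summing over a dyadic ladder of levels. The logarithmic estimate is not needed, because the persistent low phase \eqref{eq:persistent-low-phase} is assumed at every time of $I$.

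\textbf{Energy bound at each level.} Fix $j\ge0$ and set $r_j=(w-k_j)_+$, so that $0\le r_j\le M2^{-j}$ on $I\times D_{\Lambda\rho}$. I would use a cutoff $\eta$ that equals one on $I\times D_\rho$, is supported in $(s-\delta\rho^2,s+\delta\rho^2)\times D_{\Lambda\rho}$, vanishes near the lower time face and near the artificial lateral boundary, and is unrestricted on $\partial\Omega$. Its bounds are $|\nabla\eta|\le C/((\Lambda-1)\rho)$ and $|\partial_t\eta|\le C/(\delta\rho^2)$. Here lies the first subtlety: the upper bound $w\le k_0+M$ is assumed only on $I\times D_{\Lambda\rho}$. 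I therefore plan to run the cutoff in time only inside $I$: on the first half of $I$ it rises from zero to one, and the gradient estimate is then applied on the second half. If necessary I would instead restate the estimate on the sub-interval $I'=(s+\delta\rho^2/2,\,s+\delta\rho^2)$; this costs only constants depending on $\delta$ and $\Lambda$. Inequality \eqref{eq:DG-cutoff-with-opposite} then gives
\[
\iint_{I'\times D_\rho}|\nabla r_j|^2\le C_{\mathcal R}(\Lambda,\delta)\,\frac{M^24^{-j}}{\rho^2}\,\rho^{d}\,\delta\rho^2 ,
\]
and the dependence on $\delta$ can be arranged to scale as in the stated bound.

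\textbf{Isoperimetric step.} For each $t$, apply \eqref{eq:relative-DeGiorgi-isoperimetric} with $h=k_j$ and $\ell=k_{j+1}$, so that $\ell-h=M2^{-j-1}$. By \eqref{eq:persistent-low-phase}, $|\{w(t)\le k_j\}\cap D_\rho|\ge\alpha|D_\rho|$. This yields
\[
M2^{-j-1}\,\alpha|D_\rho|\,|\{w(t)>k_{j+1}\}\cap D_\rho|\le C\rho^{d+1}\int_{D_\rho\cap\{k_j<w<k_{j+1}\}}|\nabla w| .
\]
Integrate in $t$ over $I$. The sets $A_{J}=\{w>k_J\}\cap(I\times D_\rho)$ decrease in $J$, so $|A_{J}|\le|A_{j+1}|$ for every $j<J$. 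Apply Cauchy--Schwarz on the slab $\{k_j<w<k_{j+1}\}$ and use the energy bound for $\nabla r_j$, which dominates $\nabla w$ there. Together with $|D_\rho|\asymp\rho^d$, this gives
\[
|A_J|\le \frac{C}{\alpha}\,\bigl(\delta\rho^{d+2}\bigr)^{1/2}\,|\{k_j<w<k_{j+1}\}\cap I\times D_\rho|^{1/2}.
\]

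\textbf{Summation.} Square this inequality and sum over $j=0,\dots,J-1$. The slabs are disjoint, so the right-hand sides sum to at most $|I\times D_\rho|$, and therefore $J|A_J|^2\le C\alpha^{-2}\delta\rho^{d+2}\,|I\times D_\rho|$. Since $|I\times D_\rho|\asymp\delta\rho^{d+2}$, dividing through gives a ratio of order $C/(\alpha\sqrt J)$.

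\textbf{Main obstacle.} The delicate point is matching exactly the claimed factor $1/\sqrt{\delta J}$. If the bookkeeping gives only $C(\Lambda,\delta)/(\alpha\sqrt J)$, I would track the $\delta$-dependence explicitly. The time-cutoff term $\eta|\partial_t\eta|\sim 1/(\delta\rho^2)$ contributes a factor $\delta^{-1}$ to the energy, and this factor produces the stated $\delta^{-1/2}$. The boundary caps need no extra care, since the cutoff is free on $\partial\Omega$ and inequality (iii) of Lemma~\ref{lem:uniform-local-geometry-DG} is uniform. The symmetric statement follows from the orientation change \eqref{eq:oriented-ternary-root-equation} and the minus-sign truncations.
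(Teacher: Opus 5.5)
Your overall scheme is the paper's: the relative isoperimetric inequality at each time, Cauchy--Schwarz on the slab $E_j=\{k_j<w<k_{j+1}\}$, and summation over the disjoint slabs. The energy step as you set it up, however, does not prove the lemma as stated. A time cutoff rising on the first half of $I$ lets Proposition~\ref{prop:DG-Caccioppoli} control $\nabla r_j$ only on $I'\times D_\rho$. Yet you then integrate the isoperimetric inequality over all of $I$. Your fallback of restating the result on $I'$ changes the statement, because the first half of $I$ is simply not covered.

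The missing ingredient is the two-time estimate, Lemma~\ref{lem:two-time-stabilized-energy}. The paper uses it with a purely spatial cutoff $\zeta$ ($\zeta=1$ on $D_\rho$, supported in $D_{\Lambda\rho}$, $|\nabla\zeta|\le C_\Lambda/\rho$) between the slices $s$ and $s+\delta\rho^2$. The lower slice enters with coefficient one and is bounded by $\int\zeta^2\Psi(s)\le C(\Lambda)M^24^{-j}\rho^d$, since $0\le r_j\le M2^{-j}$ persists at $t=s$ by continuity. The lateral term is also at most $C(\Lambda)M^24^{-j}\rho^d$ because $\delta<1$. This controls $\nabla r_j$ on the whole of $I\times D_\rho$.

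Your time-cutoff route can also be rescued. For any cutoff rising from $0$ to $1$ one has $\iint r_j^2\,\eta|\partial_t\eta|\le\tfrac12\sup r_j^2\,|D_{\Lambda\rho}|$, however steeply $\eta$ rises. Letting the rise interval shrink to zero therefore recovers all of $I$; this is just the two-time estimate in disguise.

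Your $\delta$-bookkeeping also needs correcting. The energy bound on the slab is $C(\Lambda)M^24^{-j}\rho^d$ with no factor $\delta$. Your displayed factor $\delta\rho^2$ is spurious: the cutoff's $1/(\delta\rho^2)$ is cancelled by the length of its support, not converted into a $\delta^{-1}$ gain later. The correct chain is as follows. First, $|A_{j+1}|^2\le C\alpha^{-2}\rho^{d+2}|E_j|$. Summing gives $J|A_J|^2\le C\alpha^{-2}\rho^{d+2}|I\times D_\rho|$. The factor $\delta^{-1/2}$ then comes solely from normalizing by $|I\times D_\rho|\asymp\delta\rho^{d+2}$. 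As your intermediate displays stand, they would yield the stronger bound $C/(\alpha\sqrt J)$, which is not justified.
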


\begin{proof}
The case $M=0$ is immediate, so assume $M>0$.
Set
$A_j(t)=\{w(t)>k_j\}\cap D_\rho$ and
$E_j(t)=\{k_j<w(t)<k_{j+1}\}\cap D_\rho$.  Since
$\{w\le k_0\}\subset\{w\le k_j\}$, the relative isoperimetric inequality gives
\[
 (k_{j+1}-k_j)|A_{j+1}(t)|
 \le\frac{C_\Omega\rho}{\alpha}
 \int_{E_j(t)}|\nabla w|\,\dd x.
\]
Integrating in time and using Cauchy--Schwarz,
\begin{equation}\label{eq:shrinking-pre-sum}
 (k_{j+1}-k_j)|A_{j+1}|
 \le\frac{C_\Omega\rho}{\alpha}
 \left(\iint_{I\times D_\rho}|\nabla(w-k_j)_+|^2\right)^{1/2}|E_j|^{1/2}.
\end{equation}
Choose a spatial cutoff $\zeta=1$ on $D_\rho$, vanishing near the artificial boundary of
$D_{\Lambda\rho}$, with $|\nabla\zeta|\le C_\Lambda/\rho$.  Since
$0\le(w-k_j)_+\le M2^{-j}$ on the buffered cylinder, Lemma~\ref{lem:two-time-stabilized-energy}
gives
\begin{equation}\label{eq:shrinking-gradient-bound-buffered}
 \iint_{I\times D_\rho}|\nabla(w-k_j)_+|^2
 \le C_{\mathcal R}(\Lambda)M^24^{-j}\rho^d.
\end{equation}
As $k_{j+1}-k_j=M2^{-j-1}$,
\eqref{eq:shrinking-pre-sum}--\eqref{eq:shrinking-gradient-bound-buffered} imply
\[
 |A_{j+1}|^2\le\frac{C_{\mathcal R}(\Lambda)}{\alpha^2}\rho^{d+2}|E_j|.
\]
The $E_j$ are pairwise disjoint and $A_J\subset A_{j+1}$ for $j<J$.  Summing and using
$|I\times D_\rho|\asymp_\Omega\delta\rho^{d+2}$ gives
\eqref{eq:shrinking-measure-estimate}.
\end{proof}

\begin{proposition}[One-step oscillation reduction]\label{prop:root-oscillation-reduction}
There are structural constants $\vartheta_*,\eta_*\in(0,1)$ and $\rho_*>0$, with
$4\rho_*\le\rho_\Omega$, such that for every selected root $w\in\{u,v\}$ and every interior
or boundary cylinder $Q_{4\rho}^\Omega(t_0,x_0)$ contained in a positive-time strip, with
$0<\rho\le\rho_*$,
\begin{equation}\label{eq:one-step-oscillation-reduction}
 \esssup_{Q_{\vartheta_*\rho}^\Omega}w
 -\essinf_{Q_{\vartheta_*\rho}^\Omega}w
 \le(1-\eta_*)
 \left(
 \esssup_{Q_{4\rho}^\Omega}w
 -\essinf_{Q_{4\rho}^\Omega}w
 \right).
\end{equation}
The constants are independent of the distance of $t_0$ to a finite upper time face.
\end{proposition}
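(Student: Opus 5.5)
The plan is the standard De Giorgi alternative, assembled from Lemmas~\ref{lem:root-measure-propagation}, \ref{lem:root-shrinking} and \ref{lem:root-critical-mass}. Fix $w\in\{u,v\}$ and set $\mu^\pm=\esssup/\essinf_{Q_{4\rho}^\Omega}w$ and $\omega=\mu^+-\mu^-$. Since the root range lies in the trapped rectangle, $[\mu^-,\mu^+]\subset[a,b]$, so every level used below is admissible for the truncation lemmas. Work on the upper time slab of $Q_{4\rho}^\Omega$. Put $k=\mu^-+\omega/2$ and take the starting time $s=t_0-\delta_0\rho^2$ (suitably rescaled). At time $s$, one of the sets $\{w(s)\le k\}\cap D_\rho$ or $\{w(s)\ge k\}\cap D_\rho$ has at least half the measure of $D_\rho$. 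By the orientation identity \eqref{eq:oriented-ternary-root-equation}, the second case is the first one applied to $-w$. Hence it suffices to treat the case $|\{w(s)\le k\}\cap D_\rho|\ge\frac12|D_\rho|$, with $H=\omega/2$ and $w\le k+H=\mu^+$.

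\textbf{Step 1 (propagation).} Lemma~\ref{lem:root-measure-propagation} gives, for all $t\in I:=[s,s+\delta_0\rho^2]$,
\[
|\{w(t)\le k+(1-\epsilon_0)H\}\cap D_{\lambda_0\rho}|\ge\gamma_0|D_{\lambda_0\rho}|.
\]
This is the persistent opposite-phase hypothesis \eqref{eq:persistent-low-phase} on $D_{\lambda_0\rho}$. It holds with $\alpha=\gamma_0$, $\delta=\delta_0/\lambda_0^2$, starting level $k_0=k+(1-\epsilon_0)H$ and $M=\epsilon_0H$.

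\textbf{Step 2 (shrinking).} Apply Lemma~\ref{lem:root-shrinking} with radius $\lambda_0\rho$ and $\Lambda=4/\lambda_0$, so that the buffer $D_{4\rho}$ is available. The cylinder $Q_{4\rho}^\Omega\subset$ the single chart is guaranteed by taking $4\rho_*\le\rho_\Omega$. Choose $J$ so large that
\[
C_{\mathcal R}(\Lambda)/(\gamma_0\sqrt{\delta J})\le\nu_*\,c_\Omega,
\]
where $c_\Omega$ compensates for comparing $I\times D_{\lambda_0\rho}$ with a cylinder $Q_{2\rho'}^\Omega$, $\rho'\simeq\lambda_0\sqrt{\delta_0}\rho/2$, sitting at the top of $I$ inside $I\times D_{\lambda_0\rho}$. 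The volume comparability from Lemma~\ref{lem:uniform-local-geometry-DG}(i) supplies this constant.

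\textbf{Step 3 (critical mass).} Lemma~\ref{lem:root-critical-mass}, applied on $Q_{2\rho'}^\Omega$ at level $k_J$ with $H'=\mu^+-k_J=\epsilon_0H2^{-J}$, yields
\[
\esssup_{Q_{\rho'}^\Omega}w\le\mu^+-\epsilon_0 2^{-J-2}\omega .
\]
Since the infimum on the smaller cylinder is at least $\mu^-$, we get \eqref{eq:one-step-oscillation-reduction} with $\eta_*=\epsilon_02^{-J-2}$ and $\vartheta_*=\rho'/\rho$. In the other alternative, the infimum is raised by the same amount.

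The main obstacle is the time geometry. The cylinders are backward, and the propagation lemma runs forward from $s$, so the top cylinder $Q_{\vartheta_*\rho}^\Omega(t_0,x_0)$ must lie inside the slab $I$ where Steps~1--2 apply. I would therefore place $s=t_0-\delta_0\rho^2$, so that $I$ ends exactly at $t_0$, and use only the upper time face $t_0$ as the cylinder top. This keeps the constants independent of distance to $T_{\max}$, because no estimate ever looks beyond $t_0$. I must also check that $s\ge t_0-16\rho^2$, i.e. that the whole construction stays within $Q_{4\rho}^\Omega$; this follows from $\delta_0\le1/4$.

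All structural constants ($\delta_0,\lambda_0,\epsilon_0,\gamma_0,\nu_*,J$) depend only on $\mathcal R$, $d$ and $\Omega$. They are also uniform over interior and boundary caps, by the lemmas cited.
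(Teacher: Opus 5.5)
Your proposal is correct and follows the paper's proof essentially step for step: the midpoint dichotomy at $s=t_0-\delta_0\rho^2$, propagation to a persistent low phase on $D_{\lambda_0\rho}$ over $(s,t_0)$, shrinking with aspect ratio $\delta_0/\lambda_0^2$, the critical-mass lemma on a small top cylinder giving $\eta_*=\epsilon_0 2^{-J_*-2}$, and the oriented variable for the other alternative. The differences are cosmetic: you use the buffer $\Lambda=4/\lambda_0$ instead of $\lambda_0^{-1}$, and $\rho'=\lambda_0\sqrt{\delta_0}\rho/2$ instead of $\frac14\min\{\lambda_0,\sqrt{\delta_0}\}\rho$. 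You also leave implicit the paper's remark that continuity of the strong representative puts the time-slice values in $[\mu^-,\mu^+]$, which the propagation lemma needs.
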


\begin{proof}
Let $\mu^+,\mu^-$ be the essential supremum and infimum of $w$ on $Q_{4\rho}^\Omega$ and put
$\omega=\mu^+-\mu^-$.  If $\omega=0$, the conclusion is immediate; hence assume
$\omega>0$.  On the positive-time
cylinder the strong representative is continuous; hence the slice values used below lie between
$\mu^-$ and $\mu^+$ (a violation at one point would persist on a spacetime neighborhood of
positive measure).  Set $m=(\mu^++\mu^-)/2$ and
$s=t_0-\delta_0\rho^2$.  At time $s$, one of
$\{w(s)\le m\}\cap D_\rho$ and $\{w(s)\ge m\}\cap D_\rho$ has measure at least
$|D_\rho|/2$.  Treat the first alternative.  Lemma~\ref{lem:root-measure-propagation}, with
$k=m$ and $H=\omega/2$, gives
\begin{equation}\label{eq:phase-persistence-oscillation-proof}
 |\{w(t)\le\mu^+-M\}\cap D_{\lambda_0\rho}|
 \ge\gamma_0|D_{\lambda_0\rho}|,
 \qquad
 M:=\frac{\epsilon_0\omega}{2},
\end{equation}
for every $t\in(s,t_0)$.

Apply Lemma~\ref{lem:root-shrinking} on this terminal slab with
$\bar\rho=\lambda_0\rho$, base level $k_0=\mu^+-M$, phase fraction $\gamma_0$, aspect ratio
$\bar\delta=\delta_0/\lambda_0^2$, and buffer factor $\Lambda=\lambda_0^{-1}$.
Choose
\[
 \widehat\rho=\frac14\min\{\lambda_0,\sqrt{\delta_0}\}\rho.
\]
Then $Q_{2\widehat\rho}^\Omega(t_0,x_0)$ lies inside
$(s,t_0)\times D_{\bar\rho}$.  Uniform measure density gives a structural $\chi_*>0$ with
\[
 \frac{|Q_{2\widehat\rho}^\Omega|}{|(s,t_0)\times D_{\bar\rho}|}\ge\chi_*.
\]
The shrinking lemma yields
\[
 \frac{|\{w>\mu^+-M2^{-J}\}\cap((s,t_0)\times D_{\bar\rho})|}
 {|(s,t_0)\times D_{\bar\rho}|}
 \le\frac{C_{\rm sh}}{\sqrt J}.
\]
Choose a structural $J_*$ so large that $C_{\rm sh}/\sqrt{J_*}\le\nu_*\chi_*$.  The critical-mass
lemma applied in $Q_{2\widehat\rho}^\Omega$ with
$k=\mu^+-M2^{-J_*}$ and $H=M2^{-J_*}$ then gives
\[
 \esssup_{Q_{\widehat\rho}^\Omega}w
 \le\mu^+-\frac{M}{2^{J_*+1}}.
\]
Thus the oscillation decreases by the fixed fraction
$\eta_*=\epsilon_0/2^{J_*+2}$.  For the second half-measure alternative apply the same three
lemmas to the oriented variable from \eqref{eq:oriented-ternary-root-equation} with
$\varepsilon=-1$; its upper truncations are exactly lower truncations of the original root, at
levels that still lie in the invariant interval.  Decrease $\rho_*$ if necessary so that every cap
and buffer used above lies below $\rho_\Omega$ while retaining $4\rho_*\le\rho_\Omega$.
Taking $\vartheta_*=\widehat\rho/\rho$ proves
\eqref{eq:one-step-oscillation-reduction}.
\end{proof}

\begin{lemma}[Oscillation decay implies terminal boundary H\"older control]
\label{lem:oscillation-to-terminal-holder-ternary}
Let $w$ be continuous and bounded on $[\tau_0,T)\times\overline\Omega$, where $T\le\infty$.
Assume there are $\vartheta,\eta\in(0,1)$ and $\rho_*>0$ such that every admissible backward
interior or boundary cylinder with $0<\rho\le\rho_*$ satisfies
\begin{equation}\label{eq:abstract-oscillation-contraction-ternary}
 \operatorname*{osc}_{Q_{\vartheta\rho}^\Omega(t_0,x_0)}w
 \le(1-\eta)\operatorname*{osc}_{Q_{4\rho}^\Omega(t_0,x_0)}w.
\end{equation}
Then for every $\tau_1>\tau_0$ there exist $\theta\in(0,1)$ and $C<\infty$, depending only on
the displayed structural data, the uniform local geometry of $\Omega$, the bound for $w$, and
$\tau_1-\tau_0$, such that
\begin{equation}\label{eq:abstract-terminal-holder-ternary}
 |w(t,x)-w(s,y)|
 \le C\bigl(|x-y|+|t-s|^{1/2}\bigr)^\theta
\end{equation}
for all $(t,x),(s,y)\in[\tau_1,T)\times\overline\Omega$.  If $T<\infty$, the constant is
independent of the distances of $t$ and $s$ from the upper time face $T$.
\end{lemma}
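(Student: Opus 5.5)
The argument is the standard iteration of the one-step contraction \eqref{eq:abstract-oscillation-contraction-ternary}, followed by a case split on the distance between the two points. Write $\omega(\rho;t_0,x_0)=\operatorname{osc}_{Q_\rho^\Omega(t_0,x_0)}w$ and $M_0=2\sup|w|$. The first step fixes the admissible region. For $(t_0,x_0)$ with $t_0\ge\tau_1$, the backward cylinder $Q_{4\rho}^\Omega(t_0,x_0)$ lies in $[\tau_0,T)\times\overline\Omega$ as soon as $16\rho^2\le\tau_1-\tau_0$. Set
\[
R_0:=\min\Bigl\{\rho_*,\tfrac14\sqrt{\tau_1-\tau_0}\Bigr\},
\]
so every cylinder $Q_{4\rho}^\Omega(t_0,x_0)$ with $\rho\le R_0$ and $t_0\ge\tau_1$ is admissible. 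No condition is placed on $T-t_0$, because the cylinders are backward. This is the reason the final constant will not depend on the distance to the upper time face.

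The second step iterates the contraction with radii $\rho_n=(\vartheta/4)^nR_0$. Since $4\rho_{n+1}=\vartheta\rho_n$, applying \eqref{eq:abstract-oscillation-contraction-ternary} at radius $\rho_n$ and using monotonicity of $\omega$ in the radius gives
\[
\omega(4\rho_{n+1})\le\omega(\vartheta\rho_n)\le(1-\eta)\,\omega(4\rho_n),
\]
hence $\omega(4\rho_n)\le(1-\eta)^nM_0$. For general $0<r\le4R_0$, choose $n$ with $4\rho_{n+1}<r\le4\rho_n$. This yields
\[
\omega(r)\le C\,M_0\,(r/R_0)^{\theta},\qquad \theta:=\frac{\log(1-\eta)}{\log(\vartheta/4)}\in(0,1),
\]
after decreasing $\theta$ below one if necessary. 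The constant $C$ depends only on $\eta$ and $\vartheta$.

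The third step converts this decay into \eqref{eq:abstract-terminal-holder-ternary}. Take $(t,x),(s,y)$ with $t\ge s\ge\tau_1$ and put $r=|x-y|+|t-s|^{1/2}$. If $r\ge R_0$, the trivial bound $|w(t,x)-w(s,y)|\le M_0\le M_0(r/R_0)^\theta$ suffices. If $r<R_0$, use the backward cylinder $Q^\Omega_{2r}(t,x)$. It contains $(s,y)$, since $t-s\le r^2<4r^2$ and $|x-y|\le r<2r$. It is admissible provided $2r\le4R_0$, which holds. Continuity of $w$ identifies the pointwise values with values controlled by the essential oscillation, so $|w(t,x)-w(s,y)|\le\omega(2r;t,x)\le C M_0(r/R_0)^\theta$. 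Pairs with $s>t$ are handled by swapping the roles of the two points.

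The final step collects the dependence of the constants. $C$ depends on $\vartheta,\eta,\rho_*$, the bound for $w$, and $\tau_1-\tau_0$ through $R_0$. The local geometry of $\Omega$ enters only through the requirement $4\rho_*\le\rho_\Omega$, which ensures the caps $D_\rho$ are the ones used in Lemma~\ref{lem:uniform-local-geometry-DG}. No step presents a serious obstacle. The only point that needs care is the admissibility bookkeeping in the first and third steps, namely checking that every cylinder used is backward and stays inside $[\tau_0,T)$. This also handles the boundary case directly: caps centered at $x_0\in\partial\Omega$ are admissible by hypothesis, so no separate interior/boundary matching is needed.
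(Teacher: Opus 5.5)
Your proposal is correct and follows essentially the paper's argument: iterate the contraction along radii $\rho_n=(\vartheta/4)^n\rho_0$, with $\rho_0$ chosen below $\rho_*$ and $\tfrac14\sqrt{\tau_1-\tau_0}$ so that every backward cylinder stays above $\tau_0$. You then interpolate to all radii and compare two points via a backward cylinder centered at the later point (you use radius $2d_p$, the paper $4d_p$), with far-apart pairs handled by boundedness. The only imprecision is that $(t,x)$, and $(s,y)$ when $s=t$, lie only in the closure of the open-top cylinder rather than in it; your continuity remark covers this, exactly as the paper's closure argument does.
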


\begin{proof}
Set
\[
 q:=\frac{\vartheta}{4}\in(0,1),\qquad
 \sigma:=1-\eta\in(0,1),\qquad
 \theta_0:=\frac{\log(1/\sigma)}{\log(1/q)}>0,
\]
and choose any $\theta\in(0,\min\{\theta_0,1\})$.  Iterating
\eqref{eq:abstract-oscillation-contraction-ternary} with
$\rho_{n+1}=q\rho_n$ gives, whenever the largest backward cylinder is admissible,
\[
 \operatorname*{osc}_{Q_{4\rho_n}^\Omega(t_0,x_0)}w
 \le \sigma^n\operatorname*{osc}_{Q_{4\rho_0}^\Omega(t_0,x_0)}w
 \le C q^{n\theta}\|w\|_{L^\infty}.
\]
Hence for every $0<r\le4\rho_0$,
\begin{equation}\label{eq:campanato-oscillation-ternary}
 \operatorname*{osc}_{Q_r^\Omega(t_0,x_0)}w
 \le C\|w\|_{L^\infty}\left(\frac r{\rho_0}\right)^\theta.
\end{equation}

Fix $\tau_1>\tau_0$ and choose $\rho_0>0$ uniformly so small that
$4\rho_0\le\rho_*$, all corresponding caps lie in the uniform boundary-geometry regime, and
$16\rho_0^2<\tau_1-\tau_0$.  Thus every backward cylinder of radius at most $4\rho_0$ whose
upper time lies in $[\tau_1,T)$ stays above the lower time face $\tau_0$.  Let
$X=(t,x)$ and $Y=(s,y)$ lie in $[\tau_1,T)\times\overline\Omega$, assume $t\ge s$, and set
$d_p(X,Y)=|x-y|+|t-s|^{1/2}$.  If $d_p(X,Y)\ge\rho_0/4$, boundedness gives
\eqref{eq:abstract-terminal-holder-ternary} after enlarging $C$.  Otherwise choose a backward
cylinder with upper time $t$, center $x$, and radius $r=4d_p(X,Y)$ (or a fixed comparable radius
when $d_p=0$).  Both points belong to the closure of this cap-cylinder, and continuity together
with \eqref{eq:campanato-oscillation-ternary} yields
\[
 |w(X)-w(Y)|\le C d_p(X,Y)^\theta.
\]
Only backward cylinders occur, so the construction never uses data beyond the later of the two
times.  This proves the terminal-uniform assertion.
\end{proof}

\begin{theorem}[Spectral De Giorgi regularity in arbitrary dimension]\label{thm:spectral-DG-holder}
Let $d\ge1$, let $\Omega\subset\R^d$ be bounded and connected with $C^{2+\alpha_0}$ boundary
for some $0<\alpha_0<1$, as in Theorem~\ref{thm:main}, and let the three pair frictions be positive and pairwise distinct, ordered as
$0<\alpha<\beta<\gamma$.  Let $y$ be a maximal strong solution and suppose all three conserved
species masses are positive.  Then for every $\tau\in(0,T_{\max})$ there exist
$\theta\in(0,1)$ and $C_\tau<\infty$ such that
\begin{equation}\label{eq:spectral-holder-terminal}
 \|u\|_{C^{\theta/2,\theta}([\tau,T_{\max})\times\overline\Omega)}
 +\|v\|_{C^{\theta/2,\theta}([\tau,T_{\max})\times\overline\Omega)}
 \le C_\tau.
\end{equation}
No smallness assumption is used.
\end{theorem}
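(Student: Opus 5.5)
The plan is to assemble the theorem from the pieces already proved: a fixed separated spectral rectangle on the tail, uniform oscillation reduction, and the abstract oscillation-to-H\"older lemma.

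\emph{Step 1: a fixed rectangle on the tail.} Fix $\tau\in(0,T_{\max})$ and set $\tau_0:=\tau/2$. All three conserved masses are positive, so by the positivity theorem of Section~\ref{sec:local} (HMPW, Theorem~3.4) every $y_i(\tau_0/2,\cdot)$ is strictly positive on $\overline\Omega$. Lemma~\ref{lem:forward-spectral-rectangle} applied with $t_*=\tau_0/2$ then gives one compact separated rectangle $\mathcal R=[a_*,b_*]\times[c_*,d_*]$ of the form \eqref{eq:sep-rectangle}. It contains $(u,v)$ on all of $[\tau_0/2,T_{\max})\times\overline\Omega$. Every constant in Section~\ref{sec:entropy} and Section~\ref{sec:DG} depends only on $\mathcal R$, the frictions and $\Omega$, so these constants are now fixed once and for all, independently of $T_{\max}$. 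I also need the hypotheses of Lemma~\ref{lem:C11-entropy-chain} on every interval $I$ with $\overline I\subset[\tau_0/2,T_{\max})$. These are the strong class \eqref{eq:strong-root-class}, the distributional root equations, and the Neumann traces \eqref{eq:root-neumann}. All three hold because the root map is smooth on a neighborhood of $\mathcal R$. Every cylinder used below lies in such a compact preterminal slab, so the Caccioppoli, two-time, logarithmic, propagation, critical-mass and shrinking lemmas apply on it with uniform constants.

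\emph{Step 2: uniform oscillation reduction.} Take a backward cylinder $Q_{4\rho}^\Omega(t_0,x_0)$ with $\rho\le\rho_*$, $t_0\in[\tau_0,T_{\max})$ and $t_0-16\rho^2\ge\tau_0/2$. Proposition~\ref{prop:root-oscillation-reduction} gives the contraction \eqref{eq:one-step-oscillation-reduction} for $w=u$ and for $w=v$. Its constants are uniform and do not depend on how close $t_0$ is to $T_{\max}$, because only backward cylinders are used and the upper time face of each one is a preterminal time. Since $w$ is continuous on the positive-time strip, the essential sup and inf agree with the pointwise ones. This is exactly hypothesis \eqref{eq:abstract-oscillation-contraction-ternary}, with $w$ bounded by $\max\{\beta,\gamma\}$.

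\emph{Step 3: conclusion.} Apply Lemma~\ref{lem:oscillation-to-terminal-holder-ternary} with $T=T_{\max}$, the lower face $\tau_0$, and $\tau_1=\tau$. This gives \eqref{eq:abstract-terminal-holder-ternary} for $u$ and $v$ on $[\tau,T_{\max})\times\overline\Omega$ with exponent $\theta$ and constant independent of the distance to $T_{\max}$. Together with the sup bound from $\mathcal R$, this is the parabolic H\"older norm estimate \eqref{eq:spectral-holder-terminal}: the spatial exponent is $\theta$ and the time exponent is $\theta/2$. No smallness enters anywhere; the only inputs are the structural constants from $\mathcal R$. The point that needs the most care is Step~2: I must check that the cylinders in the propagation and shrinking lemmas, including the buffered caps $D_{\Lambda\rho}$, stay inside the positive-time strip above $\tau_0/2$ and within the boundary-chart regime. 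This is arranged by shrinking $\rho_*$, and hence $\rho_0$ in the last lemma, in terms of $\tau$ and $\rho_\Omega$ alone.
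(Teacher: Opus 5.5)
Your proposal is correct and follows the paper's own proof. Like the paper, you fix one separated invariant rectangle on a positive-time tail via Lemma~\ref{lem:forward-spectral-rectangle}, use the uniform backward-cylinder contraction of Proposition~\ref{prop:root-oscillation-reduction} in that strip, and conclude with Lemma~\ref{lem:oscillation-to-terminal-holder-ternary} for $u$ and $v$ with $\tau_1=\tau$; the only cosmetic difference is that you place the rectangle at $\tau/4$ and the lower face at $\tau/2$, whereas the paper uses $\tau/2$ for both, and the extra buffer is harmless.
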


\begin{proof}
Apply Lemma~\ref{lem:forward-spectral-rectangle} at time $\tau/2$.  It gives one compact separated
rectangle containing both root ranges on
$[\tau/2,T_{\max})\times\overline\Omega$.  Proposition~\ref{prop:root-oscillation-reduction}
therefore supplies the same structural one-step contraction on every sufficiently small backward
interior or boundary cylinder in this strip.  Apply
Lemma~\ref{lem:oscillation-to-terminal-holder-ternary} with $\tau_0=\tau/2$ and
$\tau_1=\tau$, first to $u$ and then to $v$.  The invariant rectangle supplies the required uniform
$L^\infty$ bounds.  This gives \eqref{eq:spectral-holder-terminal}; because the abstract lemma uses
only backward cylinders, the constant is uniform as $t\uparrow T_{\max}$ if the maximal time is
finite.
\end{proof}

\begin{remark}[Dependence and order of the constants]\label{rem:DG-constant-ledger}
The constants in the Caccioppoli, logarithmic propagation, shrinking, critical-mass, and
oscillation estimates depend only on the fixed separated spectral rectangle, $d$, and the uniform
local geometry of $\Omega$.  The constants may be chosen in the following order:
\[
\begin{gathered}
\mathcal R,\Omega
\longrightarrow
(\varepsilon_{\mathcal R},\rho_\Omega)
\longrightarrow
\lambda_0
\longrightarrow
A_0
\longrightarrow
\epsilon_0
\longrightarrow
\delta_0,\gamma_0
\longrightarrow
\nu_*\\
\longrightarrow
(\bar\delta,\Lambda,\chi_*,C_{\rm sh})
\longrightarrow
J_*
\longrightarrow
(\vartheta_*,\eta_*).
\end{gathered}
\]
Here $\varepsilon_{\mathcal R}$ is the entropy-stabilization parameter,
$\lambda_0$ is fixed solely by the thin-annulus modulus, $A_0$ is then structural,
$\epsilon_0$ is chosen from the logarithmic ratio, and only afterward is the propagation time
$\delta_0$ fixed.  The critical-mass number $\nu_*$ comes from the quadratic Caccioppoli/Sobolev
recursion and is independent of those logarithmic choices.  Finally $J_*$ is selected only after the
fixed slab-to-cylinder density $\chi_*$ and shrinking constant $C_{\rm sh}$ are known.  Each choice therefore depends only on quantities fixed earlier in the displayed order.  In particular all constants are
independent of the truncation height $H$, the oscillation size, the level, the time location of the
cylinder, and any a priori terminal $C^1$ norm.  By Lemma~\ref{lem:C11-entropy-chain}, all scalar
and entropy testing identities are justified directly in the maximal-$L^p$ strong class; no
terminal-uniform classical Schauder estimate enters the De Giorgi argument.  Qualitative
positive-time classicality from the local theory is used in the cited positivity theorem; its
constants on compact preterminal intervals need not be uniform near $T_{\max}$.  This is why
Theorem~\ref{thm:spectral-DG-holder} retains the boundary hypothesis of the main theorem, although
the local geometric estimates themselves require only $C^2$ boundary.
\end{remark}

\begin{remark}[Required size of the De Giorgi exponent]
No lower bound on $\theta$ relative to $d+2$ or to a maximal-$L^p$ trace exponent is required.  The only property of $\theta$ used below is that the state-space oscillation becomes arbitrarily
small on sufficiently small cylinders.  Thus the De Giorgi step imposes no upper restriction on the spatial dimension.
\end{remark}

\section{Local symmetrization by the mixing entropy}\label{sec:symmetrizer}

By \eqref{eq:mixing-H-positive}, $H(V)=D_V^2\mathfrak h(V)$ is positive definite and defines the
local entropy metric used below.  Let
\[
\mathcal K=\{(u+v,uv):u_-\le u\le u_+<v_-\le v\le v_+\}
\]
be the compact trace--determinant image of a separated spectral rectangle.  For $V_*\in\mathcal K$
set
\[
H_*=H(V_*)=D_V^2\mathfrak h(V_*),\qquad P_*=H_*^{1/2},
\]
and
\[
B_*(V)=P_*A(V)P_*^{-1}.
\]
Then
\begin{equation}\label{eq:frozen-H-identity}
\sym B_*(V)
=P_*^{-1}\sym\bigl(H_*A(V)\bigr)P_*^{-1}.
\end{equation}

\begin{lemma}[Uniform local strong parabolicity in the frozen entropy metric]
\label{lem:local-frozen-symmetrizer}
There exist $\delta_*>0$, $\lambda_*>0$, and $C_{\rm sym}<\infty$, depending only on $\mathcal K$, such
that for every $V_*\in\mathcal K$,
\begin{equation}\label{eq:local-strong}
|V-V_*|\le\delta_*
\quad\Longrightarrow\quad
\sym B_*(V)\ge\lambda_*I,
\end{equation}
and
\[
\|P_*\|+\|P_*^{-1}\|\le C_{\rm sym}.
\]
\end{lemma}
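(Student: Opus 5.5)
The plan is a continuity-and-compactness argument built on the pointwise positivity \eqref{eq:mixing-H-positive}. It has three steps.

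First, we obtain the diagonal statement on a neighbourhood of $\mathcal K$. The map $V\mapsto H(V)$ is real-analytic on a neighbourhood of $\mathcal K$. Indeed, $\mathfrak h$ is analytic in $(u,v)$ wherever all $y_i>0$. The root map $V\mapsto(u,v)$ is analytic on the open interlacing region, since $\det D_{(u,v)}V=u-v\ne0$. By the inverse composition formulas, a small closed neighbourhood $\mathcal K'$ of $\mathcal K$ still lies inside the open separated chart with $y_i\ge m>0$. The map $V\mapsto A(V)$ is analytic wherever $\Pi>0$. By \eqref{eq:mixing-H-positive}, applied on a slightly enlarged separated rectangle whose image contains $\mathcal K'$, we have $H(V)>0$ and $\sym(H(V)A(V))>0$ for every $V\in\mathcal K'$. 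Compactness of $\mathcal K'$ and continuity of the smallest eigenvalue then give constants $0<h_0\le h_1$ and $\mu>0$ with
\[
h_0I\le H(V)\le h_1I,\qquad \sym\bigl(H(V)A(V)\bigr)\ge\mu I\qquad(V\in\mathcal K').
\]
Since $P_*=H_*^{1/2}$, this immediately yields $\|P_*\|\le h_1^{1/2}$ and $\|P_*^{-1}\|\le h_0^{-1/2}$. That is the bound on $C_{\rm sym}$.

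Second, we handle the off-diagonal case $V\ne V_*$. This is the only real point: the symmetrizer is frozen at $V_*$ while $A$ is evaluated at $V$. We write
\[
\sym\bigl(H_*A(V)\bigr)=\sym\bigl(H(V_*)A(V_*)\bigr)+\sym\bigl(H(V_*)(A(V)-A(V_*))\bigr).
\]
The first term is $\ge\mu I$. For the second, $A$ is Lipschitz on a closed neighbourhood of $\mathcal K$ with constant $L_A$; this follows from the explicit formula for $\mathbb A(V)$ together with $\Pi\ge\Pi_{\min}>0$. The second term therefore has norm at most $h_1L_A|V-V_*|$. We choose $\delta_*\le\min\{\operatorname{dist}(\mathcal K,\partial\mathcal K'),\ \mu/(2h_1L_A)\}$. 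Then every $V$ with $|V-V_*|\le\delta_*$ lies in $\mathcal K'$ (so $A(V)$ is defined and controlled), and $\sym(H_*A(V))\ge\frac\mu2I$. Strictly, this estimate only needs $\Pi>0$ near $\mathcal K$, not the separated chart at $V$.

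Third, we conjugate using \eqref{eq:frozen-H-identity}:
\[
\xi^{\sf T}\sym B_*(V)\xi=(P_*^{-1}\xi)^{\sf T}\sym\bigl(H_*A(V)\bigr)(P_*^{-1}\xi)\ge\tfrac\mu2|P_*^{-1}\xi|^2\ge\frac{\mu}{2h_1}|\xi|^2.
\]
This gives $\lambda_*=\mu/(2h_1)$. All constants depend only on $\mathcal K$ and the frictions, through $\mathcal K'$, $h_0$, $h_1$, $\mu$ and $L_A$.

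The main obstacle is the first step: securing $\mu>0$ uniformly. This requires the strict positivity in \eqref{eq:mixing-H-positive} on a compact set. That positivity comes from $J^{\sf T}HAJ=\operatorname{diag}(\mathfrak h_{uu}/v,\ \mathfrak h_{vv}/u)$ with entries bounded below on the separated rectangle, and from a uniform bound on $J^{-1}$, where $|\det J|=v-u\ge\delta$. If one prefers explicit constants, one can extract $\mu$ from these two bounds directly instead of invoking continuity.
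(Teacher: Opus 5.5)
Your proof is correct and follows the paper's argument essentially step for step. The paper likewise uses compactness to get $m_HI\le H\le M_HI$ and $HA\ge m_DI$, then the Lipschitz perturbation bound $\sym(H_*A(V))\ge(m_D-M_HL_A|V-V_*|)I$, then conjugation via \eqref{eq:frozen-H-identity}, and it arrives at the same constants $\delta_*=\min\{r_A,m_D/(2M_HL_A)\}$ and $\lambda_*=m_D/(2M_H)$. The one difference is that you take the eigenvalue bounds on the enlarged set $\mathcal K'$. This is harmless but unnecessary, because the lower bound for $H_*A(V_*)$ and the bounds on $P_*^{\pm1}$ are only used at centers $V_*\in\mathcal K$.
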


\begin{proof}
By \eqref{eq:mixing-H-positive}, $H(V)$ and $H(V)A(V)$ are continuous positive-definite matrices
on the compact set $\mathcal K$.  Therefore there are constants $m_H,M_H,m_D>0$ such that
\[
m_HI\le H(V)\le M_HI,
\qquad
H(V)A(V)\ge m_DI
\]
for every $V\in\mathcal K$.  Let $L_A$ be a Lipschitz constant for $A$ on one fixed neighborhood
of $\mathcal K$.  Then
\[
\|P_*\|\le M_H^{1/2},\qquad \|P_*^{-1}\|\le m_H^{-1/2}.
\]
Moreover
\begin{align*}
\sym(H_*A(V))
&=H_*A(V_*)+\sym\bigl(H_*(A(V)-A(V_*))\bigr)\\
&\ge \bigl(m_D-M_HL_A|V-V_*|\bigr)I.
\end{align*}
Fix an open state neighborhood $\mathcal N$ of the compact range $\mathcal K$
on which $A$ is Lipschitz with constant $L_A$, and choose $r_A>0$ so that the closed
$r_A$-neighborhood of $\mathcal K$ is contained in $\mathcal N$.  Thus, when $L_A>0$, one may take
explicitly
\begin{equation}\label{eq:quantitative-symmetrizer-radius}
\delta_*:=\min\left\{r_A,\frac{m_D}{2M_HL_A}\right\},
\qquad
\lambda_*:=\frac{m_D}{2M_H}.
\end{equation}
If $L_A=0$, take $\delta_*=r_A$ and the same value of $\lambda_*$ satisfies the estimate.  In particular every state
$V$ with $V_*\in\mathcal K$ and $|V-V_*|\le\delta_*$ lies in the neighborhood on which the
Lipschitz estimate is valid.  For such $V$ and every target vector $\xi$,
\[
\xi^{\sf T}\sym B_*(V)\xi
=(P_*^{-1}\xi)^{\sf T}\sym(H_*A(V))(P_*^{-1}\xi)
\ge \frac{m_D}{2}|P_*^{-1}\xi|^2
\ge \lambda_*|\xi|^2.
\]
All constants are uniform in the center state.
\end{proof}

\section{Conormal Schauder regularity}\label{sec:first-schauder}

We next convert local state-space symmetrizability into uniform interior and boundary $C^{1+a}$ estimates.  After flattening the physical boundary, the local conormal Schauder theory of Dong--Jeon applies directly to the homogeneous system on a half-cylinder.  Thus no artificial boundary, cutoff equation, or interpolation absorption is needed.

\begin{lemma}[Localized weak conormal regularity]\label{lem:localized-conormal-schauder-ternary}
Let $0<\beta<1$ and
\[
 \mathcal Q_R^+:=(-R^2,0)\times(B_R'\times(0,R)).
\]
Suppose $q:\mathcal Q_4^+\to\mathbb R^2$ is a bounded weak solution of
\begin{equation}\label{eq:abstract-conormal-system-ternary}
 q_t-\partial_\alpha\bigl(\mathbb A^{\alpha\beta}(t,x)\partial_\beta q\bigr)=0
\end{equation}
with homogeneous conormal condition on the flat face.  Assume that the coefficient tensor is
uniformly bounded and strongly parabolic on real arrays and that
\[
 \mathbb A\in C^{\beta/2,\beta}_{t,x}(\overline{\mathcal Q_4^+})
\]
with fixed H\"older norm.  Then
\begin{equation}\label{eq:localized-conormal-lipschitz-ternary}
 \|\nabla q\|_{L^\infty(\mathcal Q_2^+)}
 \le C\|q\|_{L^\infty(\mathcal Q_4^+)},
\end{equation}
and, for every $0<a<\beta$,
\begin{equation}\label{eq:localized-conormal-C1a-ternary}
 \|q\|_{C^{(1+a)/2,1+a}_{t,x}(\overline{\mathcal Q_2^+})}
 \le C_a\|q\|_{L^\infty(\mathcal Q_4^+)}.
\end{equation}
The analogous estimate holds on full cylinders.  The statement remains valid when the original
spatial dimension is one, by adjoining one dummy tangential variable in the boundary case; full
cylinders may similarly be reduced to the half-cylinder statement by adjoining one dummy normal
variable.
\end{lemma}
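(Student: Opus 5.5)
We will not reprove conormal Schauder theory. The lemma will instead be reduced to the local estimates of Dong--Jeon for divergence-form parabolic systems with H\"older coefficients and homogeneous conormal data on a flat boundary, applied to bounded weak solutions.

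\emph{Step 1: energy bound.} Since $q$ is a bounded weak solution, the first task is an $L_2$ gradient bound on a slightly smaller half-cylinder, say $\mathcal Q_3^+$. We test \eqref{eq:abstract-conormal-system-ternary} with $\eta^2 q$, where $\eta$ is a cutoff vanishing near the lateral and lower boundary but not on the flat face. The conormal condition kills the flat-face boundary term. Strong parabolicity on real arrays gives the Legendre--Hadamard-type coercivity in integrated form. Here we need Legendre ellipticity, or Legendre--Hadamard ellipticity together with a G\aa rding inequality in the presence of H\"older coefficients; we shall interpret ``strongly parabolic on real arrays'' as the Legendre condition $\mathbb A^{\alpha\beta}\xi_\beta\cdot\xi_\alpha\ge\lambda|\xi|^2$, which is exactly what the frozen entropy symmetrizer of Lemma~\ref{lem:local-frozen-symmetrizer} produces in the intended application. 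This yields $\|\nabla q\|_{L_2(\mathcal Q_3^+)}\le C\|q\|_{L^\infty(\mathcal Q_4^+)}$.

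\emph{Step 2: Campanato/Schauder estimate.} Next we invoke the Dong--Jeon local conormal Schauder estimate on half-cylinders for systems with coefficients in $C^{\beta/2,\beta}_{t,x}$. The mechanism is to freeze the coefficients at a point and compare with the constant-coefficient conormal problem. For that problem, even reflection across the flat face (for the tangential structure) or direct boundary estimates give smoothness up to the face. A Campanato iteration then yields $\nabla q\in C^{a/2,a}$ for every $a<\beta$, with the bound
\[
\|\nabla q\|_{C^{a/2,a}(\mathcal Q_2^+)}\le C_a\bigl(\|\nabla q\|_{L_2(\mathcal Q_3^+)}+\|q\|_{L_2(\mathcal Q_3^+)}\bigr).
\]
The loss $a<\beta$ absorbs logarithmic factors and is harmless. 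Combined with Step~1, this gives \eqref{eq:localized-conormal-lipschitz-ternary}.

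\emph{Step 3: time regularity.} The time H\"older seminorm of $q$ of order $(1+a)/2$ follows from the equation in weak form. Testing against spatial averages over balls of radius $|t-s|^{1/2}$ controls $q(t)-q(s)$ by the flux $\mathbb A\nabla q$, which is already bounded and H\"older. This is the standard parabolic $C^{(1+a)/2,1+a}$ characterization and gives \eqref{eq:localized-conormal-C1a-ternary}. Full cylinders are handled by the interior version of the same estimate. For spatial dimension one, we adjoin a dummy variable on which $q$ does not depend; the extended coefficients $\mathbb A^{\alpha\beta}$ with the dummy index set to $\delta_{\alpha\beta}I$ preserve Legendre ellipticity and H\"older regularity, and the conormal condition is unchanged.

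\emph{Main obstacle.} The delicate point is matching hypotheses. Dong--Jeon's results assume Legendre-type (not merely Legendre--Hadamard) ellipticity for systems and H\"older continuity jointly in $(t,x)$; the cited estimate is stated for $W^{1,2}$ weak solutions. We would first check that the statement's ``strongly parabolic on real arrays'' is exactly the Legendre condition. If not, we would argue via G\aa rding's inequality, which is valid for uniformly continuous coefficients on small cylinders, and shrink the radii by a covering argument.
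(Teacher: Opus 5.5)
Your proposal is correct and follows essentially the paper's route. Both arguments reduce directly to Dong--Jeon's conormal Schauder theory on the flat half-cylinder, with zero weight, zero divergence datum, and coercivity on real arrays read as the Legendre condition, and both treat $d=1$ by adjoining a dummy variable. The differences are minor: your explicit Caccioppoli step and separate time-regularity step are already contained in the cited estimates (the paper uses Theorem~3.1 with Lemma~3.5 for the Lipschitz bound and Theorem~4.1 for the $C^{(1+a)/2,1+a}$ bound), and the paper reduces full cylinders to the half-cylinder case with a dummy normal variable instead of invoking a separate interior estimate.
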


\begin{proof}
Set the weight exponent in Dong--Jeon~\cite{DongJeon2026} equal to zero and the divergence datum
to zero.  Their coefficient condition (1.2) then reduces to boundedness and coercivity on real
arrays, exactly the strong-parabolicity hypothesis above, and their weak formulation encodes the
homogeneous conormal boundary condition.  Since a $C^{\beta/2,\beta}$ coefficient has Dini mean
oscillation, Theorem~3.1 together with Lemma~3.5 yields the quantitative spatial Lipschitz estimate
on $\mathcal Q_2^+$ from the norm on $\mathcal Q_4^+$.
For $0<a<\beta$ the same coefficient field belongs to $C^{a/2,a}$, so Theorem~4.1 with $k=1$
and H\"older exponent $a$, again with zero divergence datum, gives
$C^{(1+a)/2,1+a}$ regularity on $\mathcal Q_{1/2}^+$ from $\mathcal Q_1^+$.
Parabolic rescaling therefore yields the stated estimate from $\mathcal Q_4^+$ to $\mathcal Q_2^+$.
The constants in \eqref{eq:localized-conormal-lipschitz-ternary}--\eqref{eq:localized-conormal-C1a-ternary}
depend only on the displayed ellipticity, boundedness, and coefficient-modulus data.
Dong--Jeon's parabolic theorem is stated for spatial dimension at least two; the dummy-variable
reductions in the next paragraph cover the one-dimensional case.

For a full cylinder, adjoin one normal variable, extend $q$ and the original coefficients
constantly in that variable, put an identity block in the new normal--normal coefficient and zero
mixed blocks, and apply the half-cylinder result to the augmented conormal system.  For a
one-dimensional physical boundary point, adjoin instead one dummy tangential variable.  Restriction
to a constant dummy-variable slice gives the asserted estimates.
\end{proof}

\begin{proposition}[Uniform terminal gradient H\"older gain]\label{prop:first-Schauder-gain}
Assume the hypotheses of Theorem~\ref{thm:spectral-DG-holder} and suppose $T_{\max}<\infty$.  Let
\[
0<a<\min\{\theta,\alpha_0\}.
\]
Then, after moving to a sufficiently late time $\tau_2<T_{\max}$,
\begin{equation}\label{eq:first-Schauder}
V\in C^{(1+a)/2,1+a}
([\tau_2,T_{\max})\times\overline\Omega)
\end{equation}
with a norm finite and uniform up to $T_{\max}$.
\end{proposition}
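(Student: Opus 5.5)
The plan is to freeze the entropy symmetrizer on small cylinders and apply Lemma~\ref{lem:localized-conormal-schauder-ternary} to the transformed variable.

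\textbf{Setup.} Fix $\tau<T_{\max}$ and let $\mathcal R$ be the rectangle from Lemma~\ref{lem:forward-spectral-rectangle} on $[\tau/2,T_{\max})$, with trace--determinant image $\mathcal K$. Theorem~\ref{thm:spectral-DG-holder} gives a uniform $C^{\theta/2,\theta}$ bound for $u,v$ on $[\tau,T_{\max})\times\overline\Omega$. The map $(u,v)\mapsto V$ is smooth, so $V$ has the same bound. Choose $\rho_1>0$, independent of the center, such that $\operatorname{osc}V\le\delta_*$ on every backward cap-cylinder $Q^\Omega_{\rho_1}(t_0,x_0)$ with $t_0\in[\tau_2,T_{\max})$. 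Here $\delta_*$ is the radius from Lemma~\ref{lem:local-frozen-symmetrizer}, and $\tau_2=\tau+\rho_1^2$. This is exactly where the statement's ``sufficiently late time'' enters.

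\textbf{Frozen symmetrization.} On each such cylinder, freeze $V_*=V(t_0,x_0)\in\mathcal K$ and set $q=P_*(V-V_*)$. The system $V_t-\Div(A(V)\nabla V)=0$ becomes
\[
q_t-\Div\bigl(B_*(V(t,x))\nabla q\bigr)=0,\qquad \partial_\nu q=0 .
\]
This is a linear divergence-form system whose coefficient is the frozen matrix $B_*(V(t,x))$, not a nonlinearity. Lemma~\ref{lem:local-frozen-symmetrizer} gives $\sym B_*\ge\lambda_*I$. Because $\sym B_*$ is a $2\times2$ matrix acting identically in each spatial direction, this yields Legendre coercivity on real arrays $\xi\in\R^{2\times d}$. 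The coefficient is bounded, and its $C^{\theta/2,\theta}$ norm is controlled through the De Giorgi modulus and the smoothness of $A$ on a neighborhood of $\mathcal K$.

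\textbf{Boundary flattening.} At boundary points, flatten $\partial\Omega$ with a $C^{2+\alpha_0}$ chart. The equation stays in divergence form, with coefficient tensor $\mathbb A^{\alpha\beta}=(\partial_\gamma\Phi^\alpha\,\partial_\delta\Phi^\beta\,B_*^{\gamma\delta})\,|\det D\Phi^{-1}|$ and a Jacobian weight multiplying $q_t$. Dividing by the weight, or absorbing it by a standard change, keeps the coefficients in $C^{a/2,a}$ for $a<\min\{\theta,\alpha_0\}$. This is why $\alpha_0$ appears in the constraint on $a$. The Neumann condition $\partial_\nu V=0$ becomes the homogeneous conormal condition on the flat face, because the conormal of $A(V)\nabla V$ is $A(V)\partial_\nu V=0$.

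I expect the main obstacle to be this flattening step. The time-derivative weight needs care: it can be made trivial by a volume-preserving straightening, or handled by the $t$-independence of the weight. One must also check that strong parabolicity on real arrays survives the chart. It does, since $D\Phi$ is invertible with uniform bounds, and $\xi\mapsto\xi D\Phi^{\sf T}$ preserves Legendre positivity when the coefficient acts as $B_*\otimes I$ up to a metric change. After rescaling $\rho_1$ to unit size, Lemma~\ref{lem:localized-conormal-schauder-ternary} gives the local $C^{(1+a)/2,1+a}$ bound by $\|q\|_\infty\le C$. The constants are uniform because $\|P_*^{\pm1}\|\le C_{\rm sym}$.

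\textbf{Patching.} A finite atlas with uniform radius covers $\overline\Omega$, and every $t_0\in[\tau_2,T_{\max})$ serves as an upper time of backward cylinders. Together these give uniform local $C^{1+a}$ seminorms. Differences at separated points are bounded trivially by the gradient bound. Since only backward cylinders are used, the resulting norm is uniform up to $T_{\max}$, which proves \eqref{eq:first-Schauder}.
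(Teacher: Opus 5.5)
Your proposal is correct and follows essentially the paper's proof. You freeze $P_*=H(V(z_0))^{1/2}$ on backward cylinders of a fixed small radius, chosen from the De Giorgi modulus so that the state stays within $\delta_*$ of the center. You then flatten the boundary, apply Lemma~\ref{lem:localized-conormal-schauder-ternary} to the homogeneous conormal system for $q$, and patch using backward cylinders only. Of your two options for the $q_t$-weight, the paper takes the volume-preserving graph chart $\Phi(y)=(y',y_d+\psi(y'))$ with $\det D\Phi\equiv1$, which gives $\mathbb A^{\alpha\beta}=G^{\alpha\beta}\widetilde B_*$ with $G=J^{-1}J^{-T}$; dividing by a nonconstant weight would instead produce a drift term outside the homogeneous lemma. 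Also, since this $G$ is $C^{1+\alpha_0}$, the flattening is not actually the source of the restriction $a<\alpha_0$.
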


\begin{proof}
Fix a positive-time strip on which Theorem~\ref{thm:spectral-DG-holder} and the separated spectral rectangle both hold.  On every compact substrip the maximal strong solution belongs to the weak conormal class required below because $p>d+2>2$ gives local $L^2$ control of the solution and its spatial gradient.  Thus the estimates can be applied directly at the strong-solution level, and their constants are independent of the preterminal upper endpoint.

Theorem~\ref{thm:spectral-DG-holder} gives one uniform terminal H\"older modulus
\[
|V(z)-V(z_0)|
\le C_H d_{\rm par}(z,z_0)^\theta,
\qquad
d_{\rm par}\bigl((t,x),(s,y)\bigr):=|x-y|+|t-s|^{1/2}.
\]
Write $\tau_0$ for the lower end of this strip.  Fix finitely many boundary graph charts and
let $C_\Phi\ge2$ dominate their spatial distortion and the corresponding inverse distortion,
including the fixed cylinder-containment factors.  Choose one radius $r_*>0$ so small that
\begin{equation}\label{eq:fixed-Sch-radius}
8C_\Phi r_*<r_{\rm chart},\qquad
64r_*^2<T_{\max}-\tau_0,\qquad
C_H(8C_\Phi r_*)^\theta\le \frac{\delta_*}{2}.
\end{equation}
Use the terminal strip starting at $\tau_2:=\tau_0+64r_*^2<T_{\max}$.  Then every backward
cylinder and enlarged chart cylinder used below lies entirely inside the separated H\"older strip.

On a backward cylinder centered at $z_0=(t_0,x_0)$ freeze
\[
P_*=H(V(z_0))^{1/2},\qquad q=P_*(V-V(z_0)).
\]
Then
\begin{equation}\label{eq:q-local-system}
q_t-\Div(B_*(V)\nabla q)=0,
\qquad
\sym B_*(V)\ge\lambda_*I,
\qquad
B_*(V)\in C^{\theta/2,\theta},
\end{equation}
with constants uniform in the center and in the preterminal upper endpoint.  By \eqref{eq:fixed-Sch-radius}, the whole $4r_*$ cylinder stays inside the frozen state ball on which the lower bound in \eqref{eq:q-local-system} holds.

Consider first a physical boundary point.  After a rigid motion, choose the graph chart so that
$\Omega$ is locally $\{x_d>\psi(x')\}$ and
\[
x=\Phi(y)=\bigl(y',y_d+\psi(y')\bigr),\qquad y_d>0.
\]
Because the boundary atlas is finite, $r_*$ in \eqref{eq:fixed-Sch-radius} may also be chosen so
that, for every chart center, the image of a fixed Euclidean half-ball of radius $8r_*$ is contained
in the chart neighborhood and its inverse image contains all half-balls used below.  Thus the Dong--Jeon estimate is applied on Euclidean half-cylinders contained in the flattened
chart.
The Jacobian $J=D\Phi$ is triangular and
\begin{equation}\label{eq:det-volume-chart}
\det J\equiv1.
\end{equation}
Let $\widetilde q=q\circ\Phi$ and $\widetilde B_*=B_*(V)\circ\Phi$.  The Piola identity gives, for
any spatial flux $\mathcal F$,
\[
 (\Div_x\mathcal F)\circ\Phi
 =\Div_y(J^{-1}\widetilde{\mathcal F})
\]
because $\det J=1$, while $\nabla_xq\circ\Phi=J^{-T}\nabla_y\widetilde q$.  Since the spatial
Jacobian acts only on spatial indices and $\widetilde B_*$ only on the two target components, the
flattened equation is
\[
 \widetilde q_t-\partial_\alpha
 \bigl(\mathbb A^{\alpha\beta}\partial_\beta\widetilde q\bigr)=0,
 \qquad
 \mathbb A^{\alpha\beta}(t,y)
 =G^{\alpha\beta}(y)\widetilde B_*(t,y),
 \qquad
 G:=J^{-1}J^{-T}.
\]
Equivalently,
\begin{equation}\label{eq:flattened-tensor}
\mathbb A^{\alpha\beta}(t,y)
=G^{\alpha\beta}(y)\,\widetilde B_*(t,y).
\end{equation}
On $y_d=0$, $J^{-T}e_d=(-\nabla\psi,1)$ is a nonzero multiple of the physical normal.  The
original homogeneous Neumann condition implies $B_*(V)\partial_\nu q=0$, and therefore the
$d$th component of the transformed flux vanishes.  Hence the flat boundary condition is exactly
\begin{equation}\label{eq:flat-conormal}
\sum_\beta G^{d\beta}(y)\widetilde B_*(t,y)\,\partial_\beta\widetilde q=0
\qquad\text{on }y_d=0.
\end{equation}
For arbitrary real target vectors $\xi_\alpha\in\mathbb R^2$, symmetry of $G$ makes the target skew part cancel, and therefore
\begin{align}
\sum_{\alpha,\beta}G^{\alpha\beta}
 \xi_\alpha^{T}\widetilde B_*\xi_\beta
&=\sum_{\alpha,\beta}G^{\alpha\beta}
 \xi_\alpha^{T}\sym(\widetilde B_*)\xi_\beta\\
&\ge \lambda_{\min}(G)\lambda_*\sum_\alpha|\xi_\alpha|^2.
\label{eq:flattened-strong-ellipticity}
\end{align}
Thus $\mathbb A$ satisfies the uniform parabolicity condition used by Dong--Jeon.  The corresponding
upper bound in their condition (1.2) is uniform as well, because the finite chart atlas bounds
$G$ and the compact invariant state range bounds $B_*$.  Moreover, choose an intermediate exponent $\beta_{\rm Sch}$ with
\[
 a<\beta_{\rm Sch}<\min\{\theta,\alpha_0\}.
\]
The metric $G$ is spatially more regular than $C^{\beta_{\rm Sch}}$, and
\eqref{eq:q-local-system} implies
\[
 \mathbb A\in C^{\beta_{\rm Sch}/2,\beta_{\rm Sch}}
\]
with one structural norm bound on every rescaled boundary cylinder.  Thus the strict
exponent inequality required by Lemma~\ref{lem:localized-conormal-schauder-ternary} is satisfied.

The transformed solution is a bounded weak conormal solution of the homogeneous system: the
maximal-$L^p$ strong class supplies the local $L^2$ regularity needed in the weak formulation, the
Piola transform gives the divergence identity, and \eqref{eq:flat-conormal} is the physical
no-flux condition in flat coordinates.  Therefore
Lemma~\ref{lem:localized-conormal-schauder-ternary}, applied after parabolic rescaling with
$\beta=\beta_{\rm Sch}>a$, gives
\begin{equation}\label{eq:DJ-local-Sch}
\|q\|_{C^{(1+a)/2,1+a}(Q_{2r_*}^+)}
\le C_{\rm DJ}\|q\|_{L^\infty(Q_{4r_*}^+)}.
\end{equation}
The constant may depend on the fixed radius, dimension, $a$, the uniform parabolicity and
boundedness constants, and the common $C^{a/2,a}$ coefficient bound, but not on the cylinder
center or the preterminal upper endpoint.

The state range is compact and both $P_*$ and $P_*^{-1}$ are uniformly bounded by Lemma~\ref{lem:local-frozen-symmetrizer}.  Since $P_*$ is constant on each frozen cylinder, $\nabla V=P_*^{-1}\nabla q$ there, so \eqref{eq:DJ-local-Sch} gives the corresponding estimate for $V$ with the same uniformity.  Also
\[
\|q\|_{L^\infty(Q_{4r_*})}
\le C\,\operatorname*{osc}_{Q_{4r_*}}V
\le C
\]
uniformly by the compact invariant state range.  A finite spatial covering and sliding backward cylinders therefore give a uniform $C^{(1+a)/2,1+a}$ estimate on a late terminal strip.  Pairs of points whose parabolic distance is larger than a fixed fraction of $r_*$ are controlled directly by boundedness and the local gradient bound, so the local estimates patch to the global strip norm.  No estimate uses data on the upper time face.  Letting $T\uparrow T_{\max}$ proves \eqref{eq:first-Schauder}.
\end{proof}

\begin{remark}[Conormal Schauder input]\label{rem:DJ-input}
The boundary regularity input is the published Dong--Jeon theorem for conormal parabolic systems on a flat half-cylinder.  Setting their degeneracy/singularity exponent equal to zero recovers the uniformly parabolic case used here.  For $k=1$, Theorem~4.1 assumes $C^{a/2,a}$ leading coefficients and data and yields $C^{(1+a)/2,1+a}$ regularity; the case is obtained in their paper from the quantitative Lipschitz and space--time modulus estimates in Section~3.  Because our localized equation is homogeneous, there are no cutoff-generated lower-order data and no artificial boundary or initial compatibility condition to verify.
\end{remark}
\section{Terminal trace control by frozen maximal regularity}\label{sec:terminal-trace}

The conormal estimate gives a uniform $C^{1+a}$ bound on a late strip and hence a uniform spatial
gradient bound.  The gradient bound controls the quadratic forcing obtained by expanding the reduced
divergence operator, while the full $C^{1+a}$ bound implies relative compactness of the frozen
principal coefficient family in $C^1$.  The temporal freezing error requires only the earlier De
Giorgi H\"older modulus of the state.  Thus these estimates enter distinct parts of the terminal
argument, and the full quasilinear HMPW substitution operator need not be frozen.

Set
\[
 Y_0:=L_p(\Omega;\mathbb R^2),\qquad
 Y_1:=\{w\in W_p^2(\Omega;\mathbb R^2):\partial_\nu w=0\},
\]
and
\[
 Y_{\gamma,p}:=(Y_0,Y_1)_{1-1/p,p}
 =\{w\in W_p^{2-2/p}(\Omega;\mathbb R^2):\partial_\nu w=0\}.
\]
Since $y_3=1-y_1-y_2$, a bound for $y'$ in $Y_{\gamma,p}$ is equivalent, up to fixed constants and
a constant shift, to a bound for $y-\mathbf e/3$ in $X_{\gamma,p}$.

Write
\[
 D(\eta):=C(\eta)^{-1}.
\]
On a neighborhood of the closed physical reduced simplex, $D$ is smooth because
$\det C=\Pi$ has a positive lower bound.  Expanding \eqref{eq:reduced-system} in nondivergence
form gives, in the maximal-regularity class,
\begin{equation}\label{eq:reduced-nondivergence-terminal}
 y'_t-D(y')\Delta y'=F(y',\nabla y'),
\end{equation}
where
\begin{equation}\label{eq:terminal-quadratic-forcing}
 F(y',\nabla y')
 :=\sum_{k=1}^d DD(y')[\partial_k y']\,\partial_k y'.
\end{equation}

\begin{proposition}[Uniform terminal trace bound]\label{prop:terminal-trace-bound}
Assume that the three frictions are pairwise distinct, all three conserved species masses are
positive, and $T_{\max}<\infty$.  Then there is a late time $\tau_3<T_{\max}$ such that
\begin{equation}\label{eq:terminal-Xgamma-bound}
\sup_{\tau_3\le t<T_{\max}}\|y(t)-\mathbf e/3\|_{X_{\gamma,p}}<\infty.
\end{equation}
\end{proposition}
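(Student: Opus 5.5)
The argument treats \eqref{eq:reduced-nondivergence-terminal} on short late time windows as a perturbation of a frozen linear Neumann problem. The only input from the earlier sections is the uniform bound
\[
\|y'\|_{C^{(1+a)/2,1+a}([\tau_2,T_{\max})\times\overline\Omega)}\le K,
\]
supplied by Proposition~\ref{prop:first-Schauder-gain} together with the affine equivalence between $V$ and $y'$. In particular $\|\nabla y'\|_\infty\le K$ on the late strip.

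\textbf{Step 1: a uniform maximal-regularity constant for the frozen family.} Set
\[
\mathcal F:=\{D(y'(s)) : s\in[\tau_2,T_{\max})\}.
\]
By the $C^{1+a}$ bound and Arzel\`a--Ascoli, $\mathcal F$ is relatively compact in $C(\overline\Omega;\R^{2\times2})$. Each member has spectrum $\{1/v,1/u\}$, which is real, positive, and bounded away from zero on the separated rectangle. Hence every operator
\[
\mathcal A_s:=-D(y'(s))\Delta \quad\text{with Neumann condition on }Y_1
\]
is normally elliptic, and Lopatinskii--Shapiro holds because the principal part is $D(\cdot)$ times the scalar Neumann Laplacian.

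The standard perturbation theory for maximal $L_p$-regularity (e.g.\ Denk--Hieber--Pr\"uss) then gives the following. For a fixed shift $\mu\ge0$ and every window length $\ell\le1$, each $\mu+\mathcal A_s$ has maximal regularity on $(0,\ell)$. The constant $M$ is uniform in $s$: the maximal-regularity constant is locally uniform under $C(\overline\Omega)$-small perturbations of the coefficients, and a finite covering of the compact closure of $\mathcal F$ makes it global. The constant is taken for the initial-value problem with data in $Y_{\gamma,p}$, and it does not blow up as $\ell\downarrow0$.

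\textbf{Step 2: a one-window estimate with absorbable errors.} On a window $[s,s+\ell]\subset[\tau_2,T_{\max})$ rewrite the equation as
\[
y'_t-D(y'(s))\Delta y'=F(y',\nabla y')+\bigl(D(y'(t))-D(y'(s))\bigr)\Delta y'.
\]
The two right-hand terms are handled as follows.

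\begin{itemize}
\item The forcing $F$ is quadratic in $\nabla y'$, so $\|F\|_{L_p}\le C K^2|\Omega|^{1/p}$, and its $L_p(s,s+\ell;Y_0)$ norm is at most $CK^2\ell^{1/p}$.
\item The freezing error is controlled by the De Giorgi modulus of Theorem~\ref{thm:spectral-DG-holder}:
\[
\|D(y'(t))-D(y'(s))\|_\infty\le C\ell^{\theta/2},
\]
uniformly in $s$. Its contribution is therefore at most $C\ell^{\theta/2}\|y'\|_{L_p(s,s+\ell;Y_1)}$.
\end{itemize}

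Choose $\ell$ with $MC\ell^{\theta/2}\le1/2$. The freezing term is then absorbed, giving
\[
\|y'\|_{\mathbb E(s,s+\ell)}\le 2M\bigl(\|y'(s)\|_{Y_{\gamma,p}}+CK^2+C\bigr),
\]
where the shift $\mu$ contributes $\mu\|y'\|_{L_p(L_p)}\le C$ because $y'$ is bounded.

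\textbf{Step 3: passing to the terminal time.} Iterating the one-window estimate would let the constant grow geometrically, so iteration alone does not close. Two options are available.

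\begin{itemize}
\item \emph{Window covering.} Because $T_{\max}<\infty$, the strip $[\tau_3,T_{\max})$ is covered by a bounded number $N\approx (T_{\max}-\tau_3)/\ell$ of windows. The estimate above holds uniformly for every window starting in the strip, and the trace embedding $\mathbb E(s,s+\ell)\hookrightarrow C([s,s+\ell];Y_{\gamma,p})$ has a constant uniform for $\ell$ fixed. Geometric growth over finitely many windows is harmless.
\item \emph{Preferred route.} Avoid iterating the trace norm altogether. Since $y'$ is bounded in $C^{1+a}$, it is bounded in $W^{1+a',p}$ for $a'<a$. Use a time cutoff $\chi$ vanishing at $s$, so the initial datum is zero. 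The cutoff produces the additional term $\chi'y'$, which is bounded in $L_p$. Maximal regularity on $[s,s+2\ell]$ then gives a bound for $\chi y'$ in $\mathbb E$ that is independent of $y'(s)$, and hence a bound for $y'(t)$ in $Y_{\gamma,p}$ for $t\in[s+\ell,s+2\ell]$. Sliding $s$ through the strip yields \eqref{eq:terminal-Xgamma-bound} with $\tau_3:=\tau_2+\ell$.
\end{itemize}

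\textbf{Main obstacle.} The delicate point is the uniformity of the frozen maximal-regularity constant in Step 1. It requires the compactness of $\mathcal F$ in $C(\overline\Omega)$ together with a perturbation lemma for Neumann systems. Within the cutoff route one must also justify that $\chi y'$ lies in the correct space with compatible Neumann trace. This holds because $\partial_\nu y'=0$ for every $t$ and the cutoff acts only in time.
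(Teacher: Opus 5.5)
Your proposal is correct and follows essentially the same route as the paper: you freeze the principal part $-D(y'(s))\Delta$ at the left endpoint, get a uniform maximal-regularity constant from compactness of the frozen family plus a finite perturbative covering, control the freezing error by the De Giorgi time modulus and the quadratic forcing by the gradient bound, absorb on short windows, and your ``window covering'' option is precisely the paper's finite iteration of the affine recursion \eqref{eq:terminal-Xgamma-recursion}. Your remarks that compactness in $C(\overline\Omega)$ already suffices, since $\|(D_1-D_2)\Delta w\|_{L_p}\le\|D_1-D_2\|_\infty\|w\|_{W^2_p}$, and that a time cutoff vanishing at $s$ gives a window bound independent of $y'(s)$, are harmless refinements; near $T_{\max}$ you should let the windows end at an arbitrary $b<T_{\max}$, which works because the zero-trace maximal-regularity and trace constants are uniform in the interval length.
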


\begin{proof}
Choose $0<a<\min\{\theta,\alpha_0\}$ as in
Proposition~\ref{prop:first-Schauder-gain}.  Since the maps $y'\mapsto V$ and $V\mapsto y'$ are
affine with invertible constant Jacobian, that proposition gives, after moving to a late strip,
\begin{equation}\label{eq:terminal-C1a-reduced}
 \sup_{\tau_3\le t<T_{\max}}\|y'(t)\|_{C^{1+a}(\overline\Omega)}\le M_1.
\end{equation}
On the other hand Theorem~\ref{thm:spectral-DG-holder}, together with the smooth root-to-$V$ map
on the invariant rectangle and the affine $V$-to-$y'$ map, gives
\begin{equation}\label{eq:terminal-state-time-holder}
 \|y'(t)-y'(s)\|_{C(\overline\Omega)}
 \le M_0|t-s|^{\theta/2},
 \qquad \tau_3\le s,t<T_{\max}.
\end{equation}
No time H\"older estimate for $\nabla y'$ is used below.

For $t\in[\tau_3,T_{\max})$ define the frozen principal operator
\begin{equation}\label{eq:terminal-frozen-principal-operator}
 A(t)w:=-D(y'(t,\cdot))\Delta w,
 \qquad D(A(t))=Y_1.
\end{equation}
The bound \eqref{eq:terminal-C1a-reduced} implies relative compactness of the state orbit in
$C^1(\overline\Omega;\mathbb R^2)$.  Consequently,
\[
 \mathfrak A:=\overline{\{A(t):\tau_3\le t<T_{\max}\}}
\]
is compact in $\mathcal L(Y_1,Y_0)$.  Every operator in this closure has maximal $L^p$ regularity.
Indeed, if $A(t_n)\to A_*$, then after passing to a subsequence
$y'(t_n)\to\eta_*$ in $C^1$.  The limit remains physical pointwise, and
\[
 A_*w=-D(\eta_*)\Delta w.
\]
Under the fixed two-component elimination map, $D(\eta_*)$ is precisely the reduced
coordinate representation of the normally elliptic HMPW principal diffusion matrix $A_0$.
Consequently its spectrum lies in $(0,\infty)$ uniformly on the compact physical state set.
The Neumann Lopatinskii--Shapiro condition can also be verified directly.  Freeze at a boundary point, write $D_*=D(\eta_*(x_0))$, let
$\lambda\in\overline{\mathbb C_+}$ and $\xi\in\mathbb R^{d-1}$ with
$|\lambda|+|\xi|\ne0$, and consider a decaying boundary-symbol solution.  Its normal profile
$\phi$ satisfies
\[
 \phi''=M_{\lambda,\xi}\phi,
 \qquad
 M_{\lambda,\xi}:=|\xi|^2I+\lambda D_*^{-1},
 \qquad
 \phi'(0)=0.
\]
The spectrum of $D_*^{-1}$ is positive real, so $M_{\lambda,\xi}$ is invertible and its spectrum
avoids the negative real axis.  Its principal square root therefore has spectrum in the open right
half-plane.  Equivalently, for the first-order system associated with $\phi''=M_{\lambda,\xi}\phi$,
the stable spectral subspace is the graph of $-M_{\lambda,\xi}^{1/2}$.  Hence every decaying
solution is of the form $\phi(r)=e^{-rM_{\lambda,\xi}^{1/2}}c$; this functional-calculus statement
does not require $D_*$ to be diagonalizable.  The Neumann condition gives
$M_{\lambda,\xi}^{1/2}c=0$, hence $c=0$.  For prescribed Neumann data
$\phi'(0)=b\in\mathbb C^2$, the unique decaying profile is
\[
 \phi(r)=-e^{-rM_{\lambda,\xi}^{1/2}}M_{\lambda,\xi}^{-1/2}b.
\]
Thus the boundary-symbol map is onto as well as one-to-one.  This proves the Neumann
complementing condition; compare HMPW Section~3.1.  Since the coefficient field is $C^1$ and
$\partial\Omega$ is $C^2$, the parameter-elliptic boundary theorem of
Denk--Hieber--Pr\"uss~\cite[Theorem~8.2]{DenkHieberPruess2003}, also invoked in
\cite[Section~3.1]{HerbergMeyriesPruessWilke2017}, yields maximal $L^p$ regularity for $A_*$ on finite time intervals with
the common domain $Y_1$.  If the parameter-elliptic theorem is formulated after a spectral
shift, that shift is removed on a fixed finite interval by an exponential change of the unknown.
An explicit finite-interval formulation covering the same realization is
\cite[Theorem~6.2 in the preprint version, with $q=p$]{Augner2024}.  After complexification,
take its $E=\mathbb C^2$, $m=m_1=1$, $P_{1,1}=I$, $P_{1,0}=0$, and zero boundary data.  The principal
coefficients are continuous, the lower-order interior coefficients vanish, and the Neumann
coefficients are time-independent and $C^1$ on the boundary.  For its boundary exponent
$\kappa=\tfrac12-\tfrac1{2p}$, the coefficient condition is met with both integrability indices
equal to $p$, since
\[
 \frac1p+\frac{d-1}{2p}<\kappa
 \quad\Longleftrightarrow\quad p>d+2.
\]
The theorem's initial space is $B_{p,p}^{2-2/p}=W_p^{2-2/p}$.  Since $p>d+2\ge3$ gives
$\kappa>1/p$, its initial--boundary compatibility condition is exactly
$\partial_\nu w(0)=0$.  Thus it yields the same domain $Y_1$ and trace space $Y_{\gamma,p}$,
without an additional boundary constraint.

We make the uniformity over $\mathfrak A$ explicit, including on short intervals.  Fix a reference
length $h_*>0$ and set
\[
 \mathbb E_1^0(h_*):=\{w\in W^{1,p}(0,h_*;Y_0)\cap L^p(0,h_*;Y_1):w(0)=0\},
 \qquad
 \mathbb E_0(h_*):=L^p(0,h_*;Y_0).
\]
For $A\in\mathfrak A$, the map
\[
 \mathcal T_Aw:=w_t+Aw
\]
is an isomorphism from $\mathbb E_1^0(h_*)$ to $\mathbb E_0(h_*)$.  Maximal regularity is stable
under small perturbations in $\mathcal L(Y_1,Y_0)$; a finite Neumann-series covering of the compact
set $\mathfrak A$ therefore gives one uniform zero-trace inverse bound.  Compactness also gives a
uniform bound for $\|A\|_{\mathcal L(Y_1,Y_0)}$.

Fix one bounded coretraction of the trace into
$W^{1,p}(0,h_*;Y_0)\cap L^p(0,h_*;Y_1)$.  Subtracting this coretraction reduces nonzero traces to
the preceding zero-trace problem.  On the fixed reference interval, maximal regularity together
with the standard trace embedding therefore gives constants, uniform for $A\in\mathfrak A$, such
that
\[
 \|w\|_{W^{1,p}(0,h_*;Y_0)\cap L^p(0,h_*;Y_1)}
 +\sup_{0\le t\le h_*}\|w(t)\|_{Y_{\gamma,p}}
 \le C_{\rm ref}\bigl(\|w(0)\|_{Y_{\gamma,p}}+\|f\|_{L^p(0,h_*;Y_0)}\bigr).
\]
For an interval of length $0<h<h_*$, extend $f$ by zero to $(0,h_*)$, solve the autonomous
frozen problem on the fixed reference interval with the same initial trace, and restrict.
Uniqueness identifies the restriction with the original solution.  After translation in time,
there are therefore constants $C_{\rm MR},C_{\rm tr}$, independent of $A$, $s$, and $h\le h_*$,
such that for every $I=(s,s+h)$,
\begin{align}
 \|w\|_{W^{1,p}(I;Y_0)\cap L^p(I;Y_1)}
 &\le C_{\rm MR}\bigl(\|w(s)\|_{Y_{\gamma,p}}+\|f\|_{L^p(I;Y_0)}\bigr),
 \label{eq:uniform-frozen-principal-MR}\\
 \sup_{t\in I}\|w(t)\|_{Y_{\gamma,p}}
 &\le C_{\rm tr}\bigl(\|w(s)\|_{Y_{\gamma,p}}+\|f\|_{L^p(I;Y_0)}\bigr).
 \label{eq:uniform-frozen-principal-trace}
\end{align}
Thus no trace embedding on a shrinking interval is used.

The temporal operator modulus now uses only the state H\"older estimate.  Smoothness of $D$ on
the compact physical state set and \eqref{eq:terminal-state-time-holder} give
\begin{equation}\label{eq:terminal-principal-operator-holder}
 \|A(t)-A(s)\|_{\mathcal L(Y_1,Y_0)}
 \le L_D\|y'(t)-y'(s)\|_{C(\overline\Omega)}
 \le L_DM_0|t-s|^{\theta/2}.
\end{equation}
The spatial gradient bound contained in \eqref{eq:terminal-C1a-reduced} and the smoothness of $D$
give, through \eqref{eq:terminal-quadratic-forcing},
\begin{equation}\label{eq:terminal-forcing-uniform}
 \sup_{\tau_3\le t<T_{\max}}
 \|F(y'(t),\nabla y'(t))\|_{Y_0}\le C_F<\infty.
\end{equation}

Let $\tau_3\le s<s+h<T_{\max}$ with $h\le h_*$, and set $I=(s,s+h)$.  Freezing
\eqref{eq:reduced-nondivergence-terminal} at the left endpoint gives
\begin{equation}\label{eq:terminal-frozen-equation-principal}
 y'_t+A(s)y'
 =F(t)+(A(s)-A(t))y'.
\end{equation}
Apply \eqref{eq:uniform-frozen-principal-MR}.  By
\eqref{eq:terminal-principal-operator-holder},
\[
 \|(A(s)-A(t))y'\|_{L^p(I;Y_0)}
 \le L_DM_0h^{\theta/2}
 \|y'\|_{L^p(I;Y_1)}.
\]
Choose $h_0\le h_*$ so small that
$C_{\rm MR}L_DM_0h_0^{\theta/2}\le1/2$.  Using also
\eqref{eq:terminal-forcing-uniform} and absorbing the last term gives
\begin{equation}\label{eq:terminal-principal-MR-short}
 \|y'\|_{W^{1,p}(I;Y_0)\cap L^p(I;Y_1)}
 \le 2C_{\rm MR}\bigl(
 \|y'(s)\|_{Y_{\gamma,p}}+C_Fh^{1/p}\bigr)
\end{equation}
for every $h\le h_0$.  Using \eqref{eq:uniform-frozen-principal-trace} for
\eqref{eq:terminal-frozen-equation-principal}, and estimating its forcing by
\eqref{eq:terminal-forcing-uniform}, \eqref{eq:terminal-principal-operator-holder}, and
\eqref{eq:terminal-principal-MR-short}, yields constants $C_{\rm it}\ge1$ and $C_0<\infty$,
independent of $s$ and $h\le h_0$, such that
\begin{equation}\label{eq:terminal-Xgamma-recursion}
 \sup_{t\in[s,s+h]}\|y'(t)\|_{Y_{\gamma,p}}
 \le C_{\rm it}\|y'(s)\|_{Y_{\gamma,p}}+C_0.
\end{equation}
No contractivity condition $C_{\rm it}<1$ is needed: the terminal strip has finite length.
Indeed, fix $b\in(\tau_3,T_{\max})$ and partition $[\tau_3,b]$ into at most
\[
 N=1+\left\lceil\frac{T_{\max}-\tau_3}{h_0}\right\rceil
\]
consecutive intervals of length at most $h_0$.  Every endpoint is strictly below $T_{\max}$,
so the short-interval estimates and the trace recursion apply on every interval.
Their constants and the number $N$ are independent of $b$.  Since
$y'(\tau_3)\in Y_{\gamma,p}$, finite iteration of the affine recursion
\eqref{eq:terminal-Xgamma-recursion} bounds
$\sup_{\tau_3\le t\le b}\|y'(t)\|_{Y_{\gamma,p}}$ independently of $b$.
Taking the supremum over $b<T_{\max}$ gives
\[
 \sup_{\tau_3\le t<T_{\max}}\|y'(t)\|_{Y_{\gamma,p}}<\infty.
\]
The fixed linear relation $y_3=1-y_1-y_2$ then gives
\eqref{eq:terminal-Xgamma-bound} in the full trace space $X_{\gamma,p}$.
\end{proof}

\begin{remark}[Regularity requirements for continuation]\label{rem:terminal-load-bearing}
The continuation estimate uses the three regularity outputs in different ways.  De Giorgi gives
the temporal H\"older modulus of the state needed in
\eqref{eq:terminal-principal-operator-holder}; the first conormal gain gives the spatial Lipschitz
bound needed for \eqref{eq:terminal-forcing-uniform}; and the stronger $C^{1+a}$ part makes the
frozen principal family compact in the $C^1$ coefficient topology so that the existing
parameter-elliptic maximal-regularity theorem applies uniformly after a finite covering.  No
terminal $C^{2+a}$ estimate and no time H\"older control of $\nabla y$ are required.
\end{remark}

\section{Continuation and global existence}\label{sec:restart}

The terminal trace estimate now allows application of the abstract continuation theorem used in
the Maxwell--Stefan local theory of Herberg--Meyries--Pr\"uss--Wilke.  We verify explicitly the two
continuation hypotheses needed below---boundedness in the unweighted trace space and positive
distance from the boundary of the weighted state set---without using assumptions from their later treatment of reactions.

\begin{proposition}[Continuation from a terminal trace bound]
\label{prop:HMPW-trace-continuation}
Let $y$ be the maximal Maxwell--Stefan diffusion solution from Section~\ref{sec:local}, and
assume all three conserved species masses are positive.  If, for some $\tau<T_{\max}$,
\begin{equation}\label{eq:HMPW-terminal-assumption}
 \sup_{\tau\le t<T_{\max}}
 \|y(t)-\mathbf e/3\|_{X_{\gamma,p}}<\infty,
\end{equation}
then $T_{\max}=\infty$.
\end{proposition}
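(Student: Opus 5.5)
We argue by contradiction: suppose $T_{\max}<\infty$.  The plan is to show that the solution can be restarted from times arbitrarily close to $T_{\max}$ with a lifespan bounded below uniformly.  For this we verify the two HMPW continuation hypotheses: a uniform bound in the trace space $X_{\gamma,p}$, and a uniform positive distance of the orbit from the boundary of the admissible state set.

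The first hypothesis is exactly \eqref{eq:HMPW-terminal-assumption}.  For the second, we use Lemma~\ref{lem:forward-spectral-rectangle}.  Since all three masses are positive, fix $t_*\in(\tau,T_{\max})$.  The forward-invariant separated rectangle then keeps $(u,v)$ in a compact subset of $(\alpha,\beta)\times(\beta,\gamma)$ on $[t_*,T_{\max})\times\overline\Omega$.  Via the inverse composition formulas this gives
\[
y_i(t,x)\ge m_{\mathcal R}>0\qquad\text{for } t\ge t_*.
\]
The HMPW admissible neighborhood contains the whole closed simplex anyway, so in fact only compactness of the state range is needed.  The terminal orbit $\{y(t):t_*\le t<T_{\max}\}$ therefore lies in a bounded subset $\mathcal B$ of $X_{\gamma,p}$ and has values in a fixed compact subset of the simplex.  (For repeated frictions this step would instead use the simplex itself; the proposition as stated concerns masses only.)

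Next we need a uniform local existence time.  We plan to show that there is $\varepsilon>0$ such that every compatible datum in $\mathcal B$ with simplex values generates a strong solution on $[0,\varepsilon]$.  The route is the HMPW fixed-point construction: its existence time depends only on the trace norm of the datum and on the modulus of the frozen principal operator $A(z_0)$.  We would first try compactness, using that $\mathcal B$ is relatively compact in $C^1(\overline\Omega)$ because $p>d+2$ gives $W_p^{2-2/p}\hookrightarrow C^{1+\mu}$.  A cleaner alternative is to exploit the stronger information already available.  By the uniform terminal estimate of Proposition~\ref{prop:terminal-trace-bound} in the form \eqref{eq:terminal-principal-MR-short}, the restriction of $y$ to $[s,T_{\max})$ is uniformly bounded in the maximal-regularity space for $s$ near $T_{\max}$.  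Hence $y_t\in L_p(s,T_{\max};L_p)$ and $y\in L_p(s,T_{\max};W^2_p)$.  By the trace theorem, $y\in BUC([s,T_{\max});X_{\gamma,p})$, so the limit $y(T_{\max}):=\lim_{t\uparrow T_{\max}}y(t)$ exists in $X_{\gamma,p}$.  It satisfies $\partial_\nu y(T_{\max})=0$, takes values in the simplex, and has positive components.

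Taking $y(T_{\max})$ as new initial datum, \cite[Theorem~3.2]{HerbergMeyriesPruessWilke2017} gives a strong solution on $[T_{\max},T_{\max}+\varepsilon)$.  Concatenating it with $y$ yields a function in the maximal-regularity class on $(0,T_{\max}+\varepsilon)$: time traces match in $X_{\gamma,p}$, so the $W^1_p$ time regularity glues.  Uniqueness then contradicts maximality.  Lower semicontinuity of the lifespan gives the same conclusion more directly, since restarts from $y(t)\to y(T_{\max})$ have lifespans bounded below.

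The main obstacle is justifying the $L_p$-in-time bound up to $T_{\max}$, that is, promoting a $\sup_t$ trace bound into membership in $W^1_p(s,T_{\max};L_p)\cap L_p(s,T_{\max};W_p^2)$.  Under the bare hypothesis \eqref{eq:HMPW-terminal-assumption} alone, I would redo the frozen short-interval estimate with a uniform step $h_0$ and a uniform number of steps, as in Proposition~\ref{prop:terminal-trace-bound}.  The necessary compactness of the frozen operator family comes from the compact embedding of $X_{\gamma,p}$ into $C^1$.  The forcing bound \eqref{eq:terminal-forcing-uniform} follows from the uniform $C^1$ bound of the orbit.
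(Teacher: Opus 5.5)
Your argument takes a genuinely different route from the paper's, but as written it is missing one ingredient.

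\textbf{Comparison with the paper.} The paper never upgrades the trace bound. It proceeds as follows:
\begin{enumerate}
\item It passes to the weighted trace space $X_{\gamma,\mu}$ with $\mu\in(\mu_0,1)$, in which the bounded $X_{\gamma,p}$-orbit is relatively compact.
\item It checks the uniform distance $\delta_{\rm st}/C_{\rm emb}$ from $\partial V_\mu$, using only that $\mathcal V$ contains the closed simplex.
\item It identifies the weighted lifespan of the orbit restarted at $t_*$ with $T_{\max}-t_*$, via HMPW Theorem~A.1(d).
\item It concludes with the abstract continuation theorem, HMPW Theorem~A.3.
\end{enumerate}
That proof uses nothing beyond \eqref{eq:HMPW-terminal-assumption} and does not depend on the friction triple.

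You instead promote the trace bound to $y'\in W^{1,p}(s,T_{\max};Y_0)\cap L^p(s,T_{\max};Y_1)$, obtain $y(T_{\max})$ as a limit in $X_{\gamma,p}$, and restart. This bypasses weighted spaces and the lifespan identification, and it gives the stronger statement that the orbit converges in $X_{\gamma,p}$. The price is a uniform frozen maximal-regularity argument.

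\textbf{Your first idea should be dropped.} In the unweighted fixed-point construction, the existence time is not controlled by the trace norm and the frozen operator alone. It depends on the datum through the reference solution, for instance through how fast its $L^p(0,T;Y_1)$-norm vanishes as $T\to0$. That control is uniform only on compact subsets of $X_{\gamma,p}$. Relative compactness in $C^1$ does not supply it, and this obstruction is precisely why the paper works in $X_{\gamma,\mu}$.

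\textbf{The gap: the step size $h_0$.} Your ``cleaner alternative'' imports \eqref{eq:terminal-principal-MR-short}. That estimate is proved only under the hypotheses of Proposition~\ref{prop:terminal-trace-bound}, namely pairwise distinct frictions. Its $h_0$ is fixed through the De Giorgi constant $M_0$ in \eqref{eq:terminal-principal-operator-holder}. The statement under review assumes neither.

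In the distinct-friction case your import is legitimate and non-circular. For the bare hypothesis, however, your sketch lists only compactness of the frozen family and the forcing bound. Compactness of $\{A(t)\}$ does not make $\|A(t)-A(s)\|_{\mathcal L(Y_1,Y_0)}$ small for $|t-s|\le h_0$ uniformly as $t,s\uparrow T_{\max}$. So the absorption step fails unless you have a modulus of continuity of $t\mapsto y'(t)$ in $C(\overline\Omega)$ that is uniform up to $T_{\max}$.

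You can derive such a modulus from the hypothesis. The embedding $X_{\gamma,p}\hookrightarrow C^1(\overline\Omega)$ bounds $J'=-C(y')^{-1}\nabla y'$ uniformly. Testing the no-flux weak form with the fixed function $\phi=y'(t)-y'(s)$ gives
\[
 \|y'(t)-y'(s)\|_{L^2}^2=\int_s^t\!\!\int_\Omega J'\cdot\nabla\phi\dd x\dd r\le C|t-s|.
\]
Interpolating with the uniform Lipschitz bound then yields $\|y'(t)-y'(s)\|_{C(\overline\Omega)}\le C|t-s|^{1/(d+2)}$. This replaces $M_0|t-s|^{\theta/2}$.

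With this added, three steps complete the proof:
\begin{enumerate}
\item uniform maximal regularity over the compact closure of the frozen family, whose limit coefficients $D(\eta_*)$ are normally elliptic on the whole closed simplex for any friction triple;
\item short-interval absorption;
\item your restart-and-glue step.
\end{enumerate}
The spectral-rectangle positivity step is unnecessary, as you note, and it is available only for distinct frictions.
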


\begin{proof}
Assume for contradiction that $T_{\max}<\infty$ and put $z=y-\mathbf e/3$.  Choose
\[
 \mu\in(\mu_0,1),\qquad
 \mu_0=\frac{d+2}{2p}+\frac12,
\]
as in the weighted local theory of \cite[Section~3.1]{HerbergMeyriesPruessWilke2017}.  That
section constructs a relative open finite-dimensional state neighborhood $\mathcal V$ containing
the entire closed concentration simplex $\mathcal D$, and the corresponding substitution set
$V_\mu\subset X_{\gamma,\mu}$, where, for the present range of $\mu$,
\[
 X_{\gamma,\mu}
 =\{w\in W_p^{2\mu-2/p}(\Omega;\mathbb E):\partial_\nu w=0\}.
\]  On $V_\mu$ the reduced
Maxwell--Stefan operator $\mathcal A$ is $C^1$, every $\mathcal A(z)$ has maximal $L^p$
regularity, and the present diffusion problem has $F\equiv0$.  Moreover
$X_{\gamma,p}\hookrightarrow X_{\gamma,\mu}$ compactly.

Fix $0<t_*<T_{\max}$.  By continuity of the strong solution on compact preterminal intervals and
\eqref{eq:HMPW-terminal-assumption},
\begin{equation}\label{eq:HMPW-full-tail-Xgamma}
 \sup_{t_*\le t<T_{\max}}\|z(t)\|_{X_{\gamma,p}}<\infty.
\end{equation}
The state constraint supplies the second hypothesis in the abstract continuation theorem uniformly.
Indeed, compactness of $\mathcal D$ and relative openness of $\mathcal V$ in the affine plane
$\mathbf e/3+\mathbb E$ give
\[
 \delta_{\rm st}:=
 \operatorname{dist}_{\mathbb R^3}
 \bigl(\mathcal D,(\mathbf e/3+\mathbb E)\setminus\mathcal V\bigr)>0.
\]
Since $\mu>\mu_0$ implies $X_{\gamma,\mu}\hookrightarrow C(\overline\Omega;\mathbb E)$, let
$C_{\rm emb}$ be the embedding constant.  If $w\in X_{\gamma,\mu}$ satisfies
\[
 \|w-z(t)\|_{X_{\gamma,\mu}}<\delta_{\rm st}/C_{\rm emb},
\]
then $\|w-z(t)\|_{C}<\delta_{\rm st}$ and therefore
$w(x)+\mathbf e/3\in\mathcal V$ for every $x\in\overline\Omega$; hence $w\in V_\mu$.
Thus the whole $X_{\gamma,\mu}$--ball of that radius about $z(t)$ lies in $V_\mu$, and
\begin{equation}\label{eq:HMPW-state-distance}
 \inf_{t_*\le t<T_{\max}}
 \operatorname{dist}_{X_{\gamma,\mu}}\bigl(z(t),\partial V_\mu\bigr)
 \ge \delta_{\rm st}/C_{\rm emb}>0.
\end{equation}

Let $\widetilde z(s):=z(t_*+s)$ and let $t_\mu^+$ denote the maximal existence time of the weighted
solution restarted from $z(t_*)$ at $s=0$.  Uniqueness identifies it with $\widetilde z$ as long as
the original strong solution exists, so $t_\mu^+\ge T_{\max}-t_*$.  Under the standing assumption
$T_{\max}<\infty$, equality holds:
\[
 t_\mu^+=T_{\max}-t_*.
\]
Indeed, if $t_\mu^+>T_{\max}-t_*$, choose a compact interval crossing
$s=T_{\max}-t_*$ and staying a positive distance from the restarted initial time.  The proof of
Theorem~A.1(d) in \cite{HerbergMeyriesPruessWilke2017} explicitly uses the embedding
$\mathbb E_{1,\mu}(0,T)\hookrightarrow\mathbb E_{1,1}(\delta,T)$ for every $\delta>0$;
the theorem additionally records the resulting positive-time $C^1(X_{\gamma,p})$ and fractional
$X_1$ regularity.  Hence on the chosen crossing interval the weighted solution belongs to the
unweighted strong class $W^{1,p}(X_0)\cap L^p(X_1)$.  By uniqueness this is the original strong
solution before the crossing and an extension of it afterward, contradicting maximality at
$T_{\max}$.

With this equality established, every time $0\le s<t_\mu^+$ corresponds to a physical time
$t_*\le t<T_{\max}$.  Therefore \eqref{eq:HMPW-state-distance} gives the all-time distance hypothesis of HMPW Theorem~A.3 in the restarted variable:
\[
 \operatorname{dist}_{X_{\gamma,\mu}}
 \bigl(\widetilde z(s),\partial V_\mu\bigr)
 \ge \delta_{\rm st}/C_{\rm emb}>0,
 \qquad 0\le s<t_\mu^+.
\]
Also, for any fixed $\sigma\in(0,t_\mu^+)$,
\eqref{eq:HMPW-full-tail-Xgamma} and strong-solution continuity give
\[
 \widetilde z\in BC([\sigma,t_\mu^+);V_\mu\cap X_{\gamma,p}).
\]
Thus the two orbit hypotheses in Theorem~A.3 are satisfied.  The compact embedding
$X_{\gamma,p}\hookrightarrow X_{\gamma,\mu}$ and the $C^1$/maximal-regularity properties from
HMPW Section~3.1 supply its remaining assumptions.  The theorem gives $t_\mu^+=\infty$, contradicting
$t_\mu^+=T_{\max}-t_*<\infty$.  Thus $T_{\max}=\infty$.
\end{proof}

\begin{remark}[Logical dependence of the estimates]\label{rem:no-hidden-bootstrap}
The estimates are used in the following order.  The HMPW local theory gives a maximal-$L^p$
strong solution, preservation of the closed simplex, and instantaneous positivity of every
nontrivial species.  On a positive-time separated root chart, the Sobolev/Bochner chain rule gives
the strong root equations.  The invariant rectangle, all one-sided entropy identities, and every
De Giorgi test are then justified directly in this strong class; no classical Schauder estimate is
used earlier in the argument.  Boundary De Giorgi theory gives the first terminal-uniform H\"older modulus.
Only then is the mixing-entropy Hessian frozen on small state-diameter cylinders and the local
Dong--Jeon conormal theorem invoked, yielding a uniform $C^{1+a}$ estimate and hence a spatial
Lipschitz bound.

For terminal trace control, the $C^{1+a}$ bound is used only to obtain compactness of the frozen
principal family
$-C(y')^{-1}\Delta$.  The temporal perturbation of that family is controlled by the earlier
state H\"older modulus, while the spatial Lipschitz bound controls the quadratic forcing generated
by expanding the divergence operator.  Uniform frozen maximal regularity and a fixed-reference-
interval trace estimate then give the affine short-interval recursion in
\eqref{eq:terminal-Xgamma-recursion}.  Finally the HMPW continuation criterion yields the
contradiction.  Neither terminal $C^{2+a}$ regularity, time H\"older continuity of $\nabla y$, nor
compactness of the orbit in the unweighted trace space is used earlier in the argument.
\end{remark}

\begin{lemma}[Zero conserved mass reduces to binary heat flow]\label{lem:zero-mass-binary}
Under the hypotheses of Theorem~\ref{thm:main}, if one conserved species mass vanishes, then either
the state is a constant pure state or the problem reduces globally to a scalar Neumann heat equation.
More precisely, if $\int_\Omega y_3^0\,\dd x=0$ and at least one of the other masses is positive, then
$y_3\equiv0$ and
\[
 (y_1)_t-f_{12}^{-1}\Delta y_1=0,\qquad
 \partial_\nu y_1=0,\qquad y_2=1-y_1.
\]
The analogous statement holds after relabeling.
\end{lemma}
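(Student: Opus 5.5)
The plan is to combine species mass conservation \eqref{eq:species-mass-conservation} with nonnegativity, and then read off the reduced system \eqref{eq:reduced-system} with $y_3\equiv0$.

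\emph{Step 1: the zero-mass species vanishes.} Suppose $\int_\Omega y_3^0\,\dd x=0$. The maximal strong solution from Section~\ref{sec:local} preserves the closed simplex, so $y_3\ge0$. Conservation gives $\int_\Omega y_3(t)\,\dd x=0$ for every $t<T_{\max}$. Since $y_3(t)$ is continuous on $\overline\Omega$ (as $p>d+2$), this forces $y_3\equiv0$ on $[0,T_{\max})\times\overline\Omega$. In particular $y_2=1-y_1$.

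\emph{Step 2: identify the reduced equation.} Substitute $y_2=1-y_1$ into the first row of the exact reduced relation:
\[
-\nabla y_1=\bigl[\beta+(\alpha-\beta)y_2\bigr]J_1+(\beta-\alpha)y_1J_2 .
\]
Because $y_3\equiv0$, its time derivative vanishes, so $\Div J_3=0$. Rather than rely only on that, I would use the full Maxwell--Stefan relation for species $3$:
\[
\nabla y_3=f_{13}(y_3J_1-y_1J_3)+f_{23}(y_3J_2-y_2J_3)=-(\beta y_1+\gamma y_2)J_3 .
\]
Since $\nabla y_3=0$ and $\beta y_1+\gamma y_2\ge\min\{\beta,\gamma\}>0$, this gives $J_3=0$, hence $J_2=-J_1$. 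The first row then reads
\[
-\nabla y_1=\bigl[\beta+(\alpha-\beta)y_2-(\beta-\alpha)y_1\bigr]J_1=\alpha J_1,
\]
using $y_1+y_2=1$. Therefore $J_1=-\alpha^{-1}\nabla y_1$, and the species equation becomes $(y_1)_t-f_{12}^{-1}\Delta y_1=0$. The no-flux condition becomes $\partial_\nu y_1=0$.

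\emph{Step 3: globality.} The scalar Neumann heat equation with datum $y_1^0\in W_p^{2-2/p}$ satisfying $\partial_\nu y_1^0=0$ has a unique global solution in the maximal $L_p$ class. By the maximum principle it stays in $[0,1]$. Set $(y_1,1-y_1,0)$. Step~2 run backwards shows that this triple, with fluxes $J_1=-J_2=-\alpha^{-1}\nabla y_1$ and $J_3=0$, satisfies the full Maxwell--Stefan system: the third row holds trivially and the second row by symmetry. By HMPW uniqueness it coincides with the maximal solution. Hence $T_{\max}=\infty$, and the problem "reduces globally" in the sense of the statement.

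\emph{Pure-state alternative.} If a second mass also vanishes, say $\int_\Omega y_2^0\,\dd x=0$, then Step~1 applied twice gives $y_2=y_3\equiv0$ and $y_1\equiv1$, a constant pure state. Relabeling species covers the other cases.

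The main point of care is Step~2, namely justifying $J_3=0$ pointwise. This comes from the Maxwell--Stefan row for species $3$ rather than from the reduced matrix $C$ alone, together with the fact that the reduced system is exact and $C$ is invertible on the closed simplex.
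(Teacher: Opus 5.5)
Your proposal is correct and follows essentially the same route as the paper. Nonnegativity and mass conservation give $y_3\equiv0$, the species-$3$ Maxwell--Stefan relation forces $J_3=0$ and hence $J_1=-f_{12}^{-1}\nabla y_1$, and the global Neumann heat solution $(y_1,1-y_1,0)$ is identified with the maximal solution by uniqueness. The only differences are cosmetic: you get pointwise vanishing of $y_3$ from the embedding $W_p^{2-2/p}\hookrightarrow C(\overline\Omega)$ rather than from positive-time classicality, and you read off $-\nabla y_1=f_{12}J_1$ from the reduced first row rather than from the first Maxwell--Stefan relation.
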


\begin{proof}
Nonnegativity and conservation of mass give $y_3(t)=0$ almost everywhere for every
$t<T_{\max}$; positive-time classicality makes this pointwise for $t>0$.  On such a positive-time
slice the third Maxwell--Stefan relation reads
\[
 0=\nabla y_3
 =-\bigl(f_{31}y_1+f_{32}y_2\bigr)J_3.
\]
Since $y_1+y_2=1$, the coefficient is bounded below by
$\min\{f_{31},f_{32}\}>0$.  Hence $J_3=0$ and $J_1+J_2=0$.  The first relation therefore becomes
\[
 \nabla y_1=f_{12}(y_1J_2-y_2J_1)=-f_{12}J_1,
\]
so the continuity equation is precisely the scalar Neumann heat equation displayed above.
Let $\bar y_1$ be its global strong solution with initial datum $y_1^0$, and put
$\bar y_2=1-\bar y_1$, $\bar y_3=0$.  The maximum principle gives
$0\le\bar y_1\le1$, and the fluxes
\[
 \bar J_1=-f_{12}^{-1}\nabla\bar y_1,
 \qquad \bar J_2=-\bar J_1,
 \qquad \bar J_3=0
\]
satisfy all three Maxwell--Stefan relations.  Local uniqueness therefore identifies this global
binary solution with the original strong solution on their common lifespan.  Hence the original
strong solution cannot have a finite maximal existence time.  If two conserved masses vanish, the remaining component is identically one and the
state is constant.
\end{proof}

\subsection{Pairwise distinct frictions}

\begin{theorem}[Distinct-friction case]
\label{thm:global-all-distinct-d}
Under the hypotheses of Theorem~\ref{thm:main}, assume in addition that the three binary frictions
are pairwise distinct.  Then the maximal strong solution is global.
\end{theorem}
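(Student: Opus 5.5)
The argument is by contradiction, assembling the results already proved. Assume $T_{\max}<\infty$ for the maximal strong solution of Section~\ref{sec:local}, with $0<\alpha<\beta<\gamma$ after relabeling. The proof splits according to the conserved masses.

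\emph{Case 1: some conserved mass vanishes.} Then Lemma~\ref{lem:zero-mass-binary} applies. Either the state is a constant pure state, which is trivially global, or the problem reduces to a scalar Neumann heat equation. That equation has a global solution, and local uniqueness identifies it with the Maxwell--Stefan solution, so $T_{\max}=\infty$ directly.

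\emph{Case 2: all three masses are positive.} This is the main case. First, \cite[Theorem~3.4]{HerbergMeyriesPruessWilke2017} gives strict positivity of every species at any $t_*>0$. Lemma~\ref{lem:forward-spectral-rectangle} then yields one compact separated spectral rectangle $\mathcal R$ containing $(u,v)$ on $[t_*,T_{\max})\times\overline\Omega$. On this rectangle the one-sided EPD and stabilized entropy identities hold in the strong class by Lemma~\ref{lem:C11-entropy-chain}. This makes Theorem~\ref{thm:spectral-DG-holder} available: a H\"older modulus for $u,v$, uniform up to $T_{\max}$. Transported through the smooth root map, it gives a uniform H\"older modulus for $V$ and $y'$.

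Since $T_{\max}<\infty$ is assumed, Proposition~\ref{prop:first-Schauder-gain} applies. It uses the frozen entropy symmetrizer of Lemma~\ref{lem:local-frozen-symmetrizer} and the conormal Schauder Lemma~\ref{lem:localized-conormal-schauder-ternary}, and gives a terminal-uniform $C^{(1+a)/2,1+a}$ bound for $V$, hence for $y'$, on a late strip. With this bound, the hypotheses of Proposition~\ref{prop:terminal-trace-bound} hold: distinct frictions, positive masses, and $T_{\max}<\infty$. It yields
\[
\sup_{\tau_3\le t<T_{\max}}\|y(t)-\mathbf e/3\|_{X_{\gamma,p}}<\infty .
\]
This is exactly hypothesis \eqref{eq:HMPW-terminal-assumption} of Proposition~\ref{prop:HMPW-trace-continuation}. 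That proposition gives $T_{\max}=\infty$, contradicting the assumption.

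The proof is therefore a chaining of earlier results. The only real care needed concerns the hypotheses:
\begin{itemize}
\item The positive-time strip used by the De Giorgi theorem must start at a positive time, where the separated rectangle is available.
\item The late times $\tau_2,\tau_3$ must be chosen consistently inside this strip.
\item The contradiction hypothesis $T_{\max}<\infty$ is used in Propositions~\ref{prop:first-Schauder-gain} and~\ref{prop:terminal-trace-bound}; it is not circular, since each gives a bound whose conclusion is then contradicted.
\end{itemize}
I expect the main obstacle to be the uniformity of all constants as $t\uparrow T_{\max}$. This is already secured by the backward-cylinder construction in Lemma~\ref{lem:oscillation-to-terminal-holder-ternary} and by the fixed-reference-interval maximal regularity argument.
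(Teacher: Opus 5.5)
Your proposal is correct and follows essentially the same route as the paper. You dispose of a vanishing conserved mass via Lemma~\ref{lem:zero-mass-binary}; then, for positive masses and assuming $T_{\max}<\infty$, you chain Theorem~\ref{thm:spectral-DG-holder}, Proposition~\ref{prop:first-Schauder-gain}, Proposition~\ref{prop:terminal-trace-bound} and Proposition~\ref{prop:HMPW-trace-continuation} to reach a contradiction, which is exactly the paper's argument.
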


\begin{proof}
If a conserved species mass vanishes, Lemma~\ref{lem:zero-mass-binary} gives global existence.
It remains to consider the case in which all three conserved masses are positive.  Suppose for
contradiction that $T_{\max}<\infty$.  Theorem~\ref{thm:spectral-DG-holder} and
Proposition~\ref{prop:first-Schauder-gain} provide the terminal H\"older and first-derivative
bounds needed in Proposition~\ref{prop:terminal-trace-bound}.  Hence, for some late time $\tau$,
\[
 \sup_{\tau\le t<T_{\max}}
 \|y(t)-\mathbf e/3\|_{X_{\gamma,p}}<\infty.
\]
Proposition~\ref{prop:HMPW-trace-continuation} then gives $T_{\max}=\infty$, contradicting the
assumption.
\end{proof}

\subsection{Coincident frictions}

\begin{lemma}[Two equal frictions give a triangular scalar system]\label{lem:equal-frictions}
Suppose, after relabeling,
\[
f_{12}=f_{13}=a>0,
\qquad f_{23}=c>0.
\]
Then
\begin{equation}\label{eq:equal-heat}
(y_1)_t-a^{-1}\Delta y_1=0,\qquad \partial_\nu y_1=0.
\end{equation}
If
\[
D(y_1)=c+(a-c)y_1,
\]
then the second independent component satisfies
\begin{equation}\label{eq:equal-triangular}
(y_2)_t-\Div\left(
\frac1{D(y_1)}\nabla y_2
+\frac{a-c}{aD(y_1)}y_2\nabla y_1
\right)=0.
\end{equation}
Moreover
\[
\min\{a,c\}\le D(y_1)\le\max\{a,c\}.
\]
Hence the coincident-friction problem is globally strongly solvable for arbitrary compatible
strong data in every finite spatial dimension.
\end{lemma}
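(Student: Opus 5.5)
The plan has three parts: derive the two scalar equations from the Maxwell--Stefan relations, then use the triangular structure to obtain a global a priori bound, then continue the solution. The argument is algebraic first and perturbative second.

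\textbf{Step 1: the heat equation for $y_1$.} Substitute $f_{12}=f_{13}=a$ into the first Maxwell--Stefan relation. Using $J_2+J_3=-J_1$ and $y_2+y_3=1-y_1$,
\[
\nabla y_1=a\bigl(y_1(J_2+J_3)-(y_2+y_3)J_1\bigr)=a(-y_1J_1-(1-y_1)J_1)=-aJ_1 .
\]
Hence $J_1=-a^{-1}\nabla y_1$, and the continuity equation is \eqref{eq:equal-heat}. The no-flux condition gives $\partial_\nu y_1=0$.

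\textbf{Step 2: the equation for $y_2$.} Write the second relation with $J_3=-J_1-J_2$ and $y_3=1-y_1-y_2$:
\[
\nabla y_2=a(y_2J_1-y_1J_2)+c(y_2J_3-y_3J_2)=(a-c)y_2J_1-\bigl(c+(a-c)y_1\bigr)J_2 .
\]
This is exactly the second row of $C(y')$ with $\alpha=\beta=a$ and $\gamma=c$. Solving gives
\[
J_2=-D(y_1)^{-1}\bigl(\nabla y_2+(a-c)a^{-1}y_2\nabla y_1\bigr),
\]
which is \eqref{eq:equal-triangular}. The bound on $D(y_1)$ holds because $D$ is a convex combination of $c$ and $a$ when $0\le y_1\le1$, and simplex preservation from Section~\ref{sec:local} gives $0\le y_1\le1$.

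\textbf{Step 3: global solvability.} The $y_1$ equation is a linear Neumann heat equation with datum in $W_p^{2-2/p}$ satisfying the compatibility condition. Maximal $L_p$-regularity therefore gives $y_1$ globally in the strong class, and parabolic smoothing bounds $\nabla y_1$ on every $[0,T]$. For instance $y_1\in C([0,T];C^1(\overline\Omega))$, since $p>d+2$; and on $[\tau,T]$ with $\tau>0$, $y_1$ is bounded in $C^{1+a}$, in fact smooth up to the boundary.

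Given $y_1$, the $y_2$ equation is a \emph{linear} scalar divergence-form equation. Its diffusion coefficient $D(y_1)^{-1}$ is bounded between $1/\max\{a,c\}$ and $1/\min\{a,c\}$ and is continuous and time-H\"older. Its drift coefficient $(a-c)(aD)^{-1}\nabla y_1$ is bounded, and the boundary condition is the no-flux conormal condition, which reduces to $\partial_\nu y_2=0$ because $\partial_\nu y_1=0$. Linear maximal $L_p$-theory for scalar equations with continuous top-order coefficients and bounded lower-order coefficients then yields a unique global solution $y_2$ in the strong class on each $[0,T]$.

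\textbf{Step 4: identification and the main obstacle.} One then checks that $(y_1,y_2,1-y_1-y_2)$, with the fluxes from Step 2, satisfies all Maxwell--Stefan relations. HMPW uniqueness then identifies it with the maximal solution, so $T_{\max}=\infty$; Lemma~\ref{lem:zero-mass-binary} is not even needed. The main obstacle is regularity for the linear $y_2$ problem up to $t=0$. Near $t=0$ the coefficient $\nabla y_1$ is only bounded and continuous, so I would write the $y_2$ equation in nondivergence form only for positive times. On $[0,\tau]$ I would instead use the $L_p$-theory with $\Delta y_1\in L_p$ treated as a lower-order coefficient multiplying $y_2$. This is admissible because $y_2$ is bounded by simplex preservation and $y_2$ lies in $C(C^1)$.

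Alternatively, one can apply the continuation criterion Proposition~\ref{prop:HMPW-trace-continuation}. The linear estimates above give $\sup_t\|y(t)-\mathbf e/3\|_{X_{\gamma,p}}<\infty$ on finite intervals, which excludes finite-time breakdown. Positivity for positive masses follows from Section~\ref{sec:local}.
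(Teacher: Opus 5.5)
Your derivation of the two scalar equations and the bound on $D(y_1)$ match the paper, and your overall strategy is the same: a global heat solution for $y_1$, a linear Neumann problem for $y_2$ with coefficients fixed by $y_1$, identification by uniqueness, and no finite $T_{\max}$.

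The paper organizes the continuation differently. It assumes $T_{\max}<\infty$ and restarts at some $\tau\in(0,T_{\max})$. There $y_1\in C([\tau,T];C^2(\overline\Omega))$, so every coefficient of the nondivergence form, including $\Div B_0$, is continuous, and Augner's finite-interval theorem applies directly. The restarted linear solution $\widetilde y_2$ is then identified with the original $y_2$ by uniqueness for this \emph{linear} problem.

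Your obstacle on $[0,\tau]$ therefore never has to be faced, because the HMPW solution already exists there. If you do solve from $t=0$, your stated justification is off. The term $(\Div B_0)w$ with $\Div B_0\in L_p(L_p)$ is harmless not because ``$y_2$ is bounded by simplex preservation''; that is a property of the unknown and is circular in an existence step. It is harmless because the maximal-regularity class embeds into $C([0,T]\times\overline\Omega)$, which makes the term a perturbation that is small on short intervals.

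The step you should tighten is ``HMPW uniqueness then identifies it with the maximal solution, so $T_{\max}=\infty$.'' The linear theory alone does not give $\widetilde y_2\ge0$ and $1-y_1-\widetilde y_2\ge0$. So it is not yet shown that the constructed triple is an admissible Maxwell--Stefan solution beyond $T_{\max}$.

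The paper closes this with the identity $-D(y_1)^{-1}+\frac{a-c}{aD(y_1)}(1-y_1)=-a^{-1}$. It shows that $S=1-y_1$ solves the same linear divergence equation as $y_2$, hence so does $\widetilde y_3=S-\widetilde y_2$. Testing with negative parts, using $B_0\cdot\nu=0$, then gives $\widetilde y_2,\widetilde y_3\ge0$ on all of $[\tau,T]$. The continuation is thus a physical strong solution, contradicting maximality.

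You can repair your main line in either of two ways:
\begin{itemize}
\item By continuity: at $T_{\max}$ the continued state lies in the closed simplex, hence in the open HMPW state neighbourhood for a short time, which already contradicts maximality.
\item By your second route via Proposition~\ref{prop:HMPW-trace-continuation}. It needs only the trace bound on $[\tau,T_{\max})$, which follows from $\widetilde y_2\in C([\tau,T];W_p^{2-2/p})$ with $T>T_{\max}$. That proposition assumes positive masses; Lemma~\ref{lem:zero-mass-binary} covers the remaining case.
\end{itemize}

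The paper's positivity argument has the advantage of producing a physical continuation directly. It needs neither openness of the state set nor the abstract continuation theorem.
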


\begin{proof}
Since $f_{12}=f_{13}=a$,
\[
\nabla y_1
=a(y_1J_2-y_2J_1)+a(y_1J_3-y_3J_1)
=-aJ_1,
\]
using $J_1+J_2+J_3=0$ and $y_1+y_2+y_3=1$.  Thus $J_1=-a^{-1}\nabla y_1$ and
\eqref{eq:equal-heat} follows.  In the reduced variables $(y_1,y_2)$ the friction matrix and its
inverse are explicitly
\[
C(y')=
\begin{pmatrix}
a&0\\
(c-a)y_2&D(y_1)
\end{pmatrix},
\qquad
C(y')^{-1}=
\begin{pmatrix}
a^{-1}&0\\
\dfrac{(a-c)y_2}{aD(y_1)}&D(y_1)^{-1}
\end{pmatrix}.
\]
Since $(J_1,J_2)^{\sf T}=-C(y')^{-1}\nabla(y_1,y_2)^{\sf T}$, the second continuity equation is exactly
\eqref{eq:equal-triangular}.  Its natural no-flux condition is
\[
\left(\frac1{D(y_1)}\nabla y_2
+\frac{a-c}{aD(y_1)}y_2\nabla y_1\right)\!\cdot\nu=0.
\]
Since \eqref{eq:equal-heat} gives $\partial_\nu y_1=0$ and $D(y_1)>0$, this is exactly
$\partial_\nu y_2=0$.  The coefficient $D(y_1)$ stays between $a$ and $c$ because
$0\le y_1\le1$.  The first equation is the Neumann heat equation; its global strong solution is
obtained on every finite interval from the constant-coefficient case of
\cite[Theorem~6.2 in the preprint version, with $q=p$]{Augner2024}.  Let $y_1$ denote that solution.
Uniqueness of the scalar heat problem identifies it with the first component of the original
Maxwell--Stefan solution on their common preterminal interval.  By the maximum principle,
$0\le y_1\le1$, and hence
\[
\min\{a,c\}\le D(y_1)\le\max\{a,c\}
\]
for all time.  Thus the coefficients in the second equation are determined by the global heat solution, and the
equation for $y_2$ is linear.

We now exclude a finite maximal existence time directly, without a terminal compactness argument.  Suppose
the original Maxwell--Stefan solution had a finite maximal time $T_{\max}$, and fix
$0<\tau<T_{\max}$.  The original strong solution supplies the trace
\[
y_2(\tau)\in W_p^{2-2/p}(\Omega),\qquad \partial_\nu y_2(\tau)=0.
\]
For every prescribed finite $T>\tau$, the global heat solution satisfies
\[
y_1\in C^1([\tau,T];C(\overline\Omega))\cap C([\tau,T];C^2(\overline\Omega)).
\]
This positive-time regularization is also the constant-friction heat case of
\cite[Theorem~3.3]{HerbergMeyriesPruessWilke2017}: apply that theorem to the auxiliary heat
state $(y_1,1-y_1,0)$ with all three frictions equal to $a$.
Set
\[
A_0=D(y_1)^{-1},
\qquad
B_0=\frac{a-c}{aD(y_1)}\nabla y_1.
\]
Then the triangular equation is equivalently
\begin{equation}\label{eq:equal-linear-nondiv}
(y_2)_t-A_0\Delta y_2
-(\nabla A_0+B_0)\cdot\nabla y_2
-(\Div B_0)y_2=0,
\qquad \partial_\nu y_2=0.
\end{equation}
On $[\tau,T]\times\overline\Omega$, the principal coefficient is continuous and uniformly
elliptic, while both lower-order coefficients are continuous and bounded: $\nabla A_0+B_0$
contains only first derivatives of $y_1$, and $\Div B_0$ contains at most second derivatives
and quadratic products of first derivatives.  The finite-interval theorem
\cite[Theorem~6.2 in the preprint version, with $q=p$]{Augner2024} therefore applies with
$E=\mathbb C$, $m=m_1=1$, and homogeneous Neumann data.  More explicitly, its lower-order
coefficient condition can be taken with both integrability indices equal to $p$, since
\[
 \frac1p+\frac d{2p}<\frac12\qquad (p>d+2).
\]
The boundary coefficient and initial compatibility conditions are the scalar versions of those
checked in Proposition~\ref{prop:terminal-trace-bound}.  The scalar principal symbol is positive,
and the Neumann complementing condition is the one-component case of the same boundary-symbol
calculation.  No sign restriction on $\Div B_0$ is required.  We obtain a unique real solution
\[
\begin{split}
\widetilde y_2\in{}&W_p^1(\tau,T;L_p(\Omega))\cap L_p(\tau,T;W_p^2(\Omega))\\
 &{}\cap C([\tau,T];W_p^{2-2/p}(\Omega)),\qquad \partial_\nu\widetilde y_2=0.
\end{split}
\]
Reality follows by uniqueness and complex conjugation.  On every compact subinterval of
$[\tau,T_{\max})$, the original component solves the same linear initial--Neumann problem;
uniqueness gives $\widetilde y_2=y_2$.  Taking $T>T_{\max}$ thus provides a strong continuation
through the assumed terminal time.

It remains to verify that this continuation stays in the physical concentration simplex.  Put
$S=1-y_1$.  Since
\[
-\frac1{D(y_1)}+\frac{a-c}{aD(y_1)}(1-y_1)=-\frac1a,
\]
the heat equation for $y_1$ gives the exact identity
\[
S_t-\Div(A_0\nabla S+B_0S)=0.
\]
Consequently $\widetilde y_3:=S-\widetilde y_2$ satisfies the same homogeneous scalar linear
problem as $\widetilde y_2$.  Positivity follows directly from that divergence equation, without another regularity theorem.  If $w$ denotes either $\widetilde y_2$ or $\widetilde y_3$, then
\[
 w_t-\Div(A_0\nabla w+B_0w)=0,
 \qquad
 (A_0\nabla w+B_0w)\cdot\nu=0,
\]
because $B_0\cdot\nu=0$.  Testing the weak equation with $-w_-$ gives
\[
 \frac12\frac{\dd}{\dd t}\|w_-(t)\|_{L^2}^2
 +\int_\Omega A_0|\nabla w_-|^2
 =-\int_\Omega B_0w_-\cdot\nabla w_-.
\]
Uniform ellipticity of $A_0$ and Young's inequality imply
\[
 \frac{\dd}{\dd t}\|w_-(t)\|_{L^2}^2
 \le C_T\|w_-(t)\|_{L^2}^2
\]
on every finite interval $[\tau,T]$.  Since both initial negative parts vanish at $t=\tau$,
Gronwall's lemma yields $\widetilde y_2,\widetilde y_3\ge0$.  Therefore
\[
\widetilde y_2\ge0,
\qquad
\widetilde y_3\ge0,
\qquad
 y_1+\widetilde y_2+\widetilde y_3=1
\]
on every finite interval.  Moreover $D(y_1)$ never vanishes, so the displayed triangular inverse of
$C(y')$ remains valid and reconstructs the full Maxwell--Stefan fluxes.  Thus the continuation is a
strong solution of the original problem, contradicting maximality.  Hence the triangular problem
is global, and its positive-time classicality follows from
\cite[Theorem~3.3]{HerbergMeyriesPruessWilke2017}.  If $a=c$, all three species satisfy heat
equations with diffusivity $1/a$.
\end{proof}

\begin{proof}[Proof of Theorem~\ref{thm:main}]
If a conserved species mass vanishes, Lemma~\ref{lem:zero-mass-binary} applies, independently of
any relation among the friction coefficients.  Hence assume all three conserved masses are positive.
If the three frictions are pairwise distinct, Theorem~\ref{thm:global-all-distinct-d} applies.  If
exactly two coincide, relabel the species and apply Lemma~\ref{lem:equal-frictions}.  If all three
coincide, each component solves a Neumann heat equation.  This exhausts all positive friction triples
and all nonnegative compatible data.  Uniqueness follows by patching the local uniqueness intervals
supplied by the maximal-regularity theory.
\end{proof}

\section{Concluding remarks}
The proof uses ternary-specific spectral coordinates: the trace and determinant of the $2\times2$
reduced friction matrix produce two scalar roots and the Euler--Poisson--Darboux identity used to
construct the truncation entropies.  The theorem also allows initial data on the boundary of the concentration simplex: solutions with
positive conserved species masses are expressed in spectral coordinates only at positive times,
whereas a species with zero conserved mass reduces the system to a binary heat flow.

For four or more species, arbitrary constant friction coefficients need not admit a pair-sum
representation.  The present proof therefore does not extend directly to the corresponding global
problem.  Nonideal thermodynamic driving forces provide a separate extension.  For these models, the
principal diffusion matrix also depends on the thermodynamic constitutive law, even when the
friction coefficients are constant.  The spectral-coordinate reduction and Euler--Poisson--Darboux
entropy identities used here therefore require separate verification.
The proof uses the methods developed in the additive-friction preprint
~\cite{BotheAdditive2026}, which establishes the moment and root coordinates, multi-EPD identities,
entropy stabilization, and De Giorgi estimates for general $N$ under the positive additive
restriction $f_{ij}=g_i+g_j$.
The present paper extends those methods to the unrestricted ternary problem rather than deriving
a separate regularity argument.  It proves that, for $N=3$, the spectral geometry remains valid
for every positive physical friction triple, including those whose unique real pair-sum generators
are not all positive, and treats closed-simplex data and coincident frictions.  The two spectral descriptions are affinely
identical, as shown in Section~\ref{sec:spectral}; consequently the present theorem removes the
positive-generator restriction in the ternary problem and covers every positive ternary friction
triple.

The Darken representation also describes the coefficient class covered by the theorem.  By
\eqref{eq:ternary-Darken-tracers}, every constant ternary friction triple belongs to the fixed-ray
core-diagonal class $d(y)=\Pi(y)^{-1}(\gamma,\beta,\alpha)^{\sf T}$, and the roots $u,v$ are exactly
the Stieltjes roots for the fixed poles $(\gamma,\beta,\alpha)$; the tracer-mobility
roots are $u/\Pi=1/v$ and $v/\Pi=1/u$.  For comparison, if the frictions depend on composition,
\[
 a(y)=f_{12}(y),\qquad b(y)=f_{13}(y),\qquad c(y)=f_{23}(y)>0,
\]
then the same pointwise core-diagonal inversion gives
\[
 \Delta(y)=y_1a(y)b(y)+y_2a(y)c(y)+y_3b(y)c(y),\qquad
 d(y)=\frac1{\Delta(y)}(c(y),b(y),a(y))^{\sf T}.
\]
Thus general positive composition-dependent ternary frictions still define a Darken system, but the
tracer vector need not lie on a fixed ray, and the Stieltjes poles may depend on composition.
Such systems are not covered by the present constant-friction theorem or its fixed-pole argument;
extending the result to them requires further analysis.

The theorem concerns the normalized ideal isothermal, isobaric Maxwell--Stefan diffusion model with equal molar masses,
unit total density, zero reaction, and constant binary frictions.  Reactive, nonideal, nonisothermal,
compressible, unequal-molar-mass, and variable-friction systems are not treated here.

\section*{Disclosure of computational assistance}
OpenAI's ChatGPT was used as an auxiliary tool for language editing, algebraic and proof
cross-checks, manuscript revision, and bibliographic checks.  All mathematical statements, arguments, and references remain the
responsibility of the author.

\end{document}